\numberwithin{equation}{section}
\definecolor{astral}{RGB}{46,116,181}
\DeclareMathAlphabet{\mathpzc}{OT1}{pzc}{m}{it}
\DeclareFontFamily{OT1}{pzc}{}
\DeclareFontShape{OT1}{pzc}{m}{it}{<-> s * [0.900] pzcmi7t}{}
\DeclareMathAlphabet{\mathpzc}{OT1}{pzc}{m}{it}
\newlength{\dhatheight}
\newcommand{\bs}[1]{\boldsymbol{#1}}
\newcommand{\WW}{\bs{\mathcal{W}}}
\newcommand{\TT}{\bs{\mathcal{T}}}
\DeclareMathAlphabet\mathbfcal{OMS}{cmsy}{b}{n}
\definecolor{darkslategray}{rgb}{0.18, 0.31, 0.31}
\definecolor{warmblack}{rgb}{0.0, 0.26, 0.26}
\def\BState{\State\hskip-\ALG@thistlm}
\newtheorem{theorem}{Theorem}[section]
\newtheorem{corollary}[theorem]{Corollary}
\theoremstyle{definition}
\newtheorem{remark}{Remark}[section]
\newtheorem{example}{Example}[section]
\journal{CALCOLO}
\newcommand{\R}{{\mathbb R}}
\newcommand{\kronecker}{\raisebox{1pt}{\ensuremath{\:\otimes\:}}}
\begin{document}

\begin{frontmatter}

\title{ \textcolor{warmblack}{\bf Alternating stationary iterative methods based on double splittings}}

\cortext[cor1]{Corresponding author}

\author[label1]{Ashish Kumar Nandi}
\address[label1]{Department of Mathematics, BITS Pilani K.K. Birla Goa Campus, Goa, India}
\ead{ashish.nandi123@gmail.com}

\author[label2]{Nachiketa Mishra\corref{cor1}}
\address[label2]{Department of Mathematics, Indian Institute of Information Technology Design and Manufacturing Kancheepuram, Chennai-600127, India}
\ead{nmishra@iiitdm.ac.in}

\author[label3]{Debasisha Mishra\corref{cor1}}
\address[label3]{Department of Mathematics, National Institute of Technology Raipur, Raipur- $492010$, India}
\ead{dmishra@nitrr.ac.in}

\begin{abstract}
 Matrix double splitting iterations are simple in implementation while solving real non-singular (rectangular) linear systems.  
In this paper, we present two Alternating Double Splitting (ADS) schemes formulated by two double splittings and then alternating the respective iterations. 
The convergence conditions are then discussed along with comparative analysis. The set of double splittings used in each ADS schemes induce a preconditioned system which helps in showing the convergence of the ADS schemes. We also show that the classes of matrices for which one ADS scheme is better than the other, are mutually exclusive. Numerical experiments confirm the proposed ADS schemes are superior to the existing methods in actual implementation. Though the problems are considered in the rectangular matrix settings, the same problems are even new in non-singular matrix settings.

\end{abstract}

\begin{keyword}
Preconditioners, iterative methods, alternating scheme, double splitting, proper splitting, Moore-Penrose inverse, non-negativity, convergence theorem, comparison theorem
\end{keyword}

\end{frontmatter}
\newpage
\section{Introduction}
Most of the problems in scientific computations, solving a linear system is inevitable. Given a real
matrix $A\in{\mathbb{R}^{m\times{n}}}$ and a real vector ${b\in{\mathbb{R}^{m}}}$,  we consider the
following linear system
\begin{equation}\label{eqn1}
Ax = b, 
\end{equation}
to find an approximate solution ${x\in{\mathbb{R}^{n}}}$. In practice, these systems are large and sparse. So, the iterative methods are more suitable than direct methods. The classical iterative methods are computationally expensive, which attracts the researcher to develop fast iterative solvers. In this context, we formulate two iterative schemes using the notion of proper splittings.
 A splitting  $A = U-V$ of $A\in{\mathbb{R}^{m\times{n}}}$ is called a \textit{ proper splitting}  \cite{berp} if $R(U) = R(A)$ and $N(U) = N(A)$, where $R(U)$ and $N(U)$ denote the range space and the null space of the matrix $U$, respectively.  Different methods of construction of proper splittings are shown in Theorem 1, \cite{bern} and Theorem 3.3, \cite{ms}. In 2018, Mishra and Mishra \cite{mismis} proved the uniqueness of a proper splitting under some sufficient conditions. In 1974, Berman and Plemmons \cite{berp} considered  the following classical iterative scheme 
\begin{equation}\label{eqn1.2}
    x^{k+1} = Hx^{k}+c,
\end{equation}
as an application of proper splittings where $H=U^{\dag}V$ and $c=U^{\dag}b$.  Here $A^{\dag}$ denotes the Moore-Penrose inverse of $A$, and is defined in the next section. 
It is well-known that an iteration scheme of the form \eqref{eqn1.2} is  \textit{convergent} if the spectral radius of the iteration matrix $H$ is less than 1. Corollary 1,  \cite{berp} assures the convergence of \eqref{eqn1.2}  to $A^{\dag}b$ (the least-squares solution of minimum norm)  for any initial vector $x^{0}$. 
Several sufficient/equivalent conditions for the convergence of \eqref{eqn1.2} are reported in \cite{balim} \cite{cli1}, \cite{cli2}, \cite{jmp} and \cite{dm} for different sub-classes of proper splittings. In 2014, Jena \textit{et al.} \cite{jmp} introduced two sub-classes of proper splittings known as proper regular
splittings and proper weak regular
splittings. A proper spitting $A=U-V$ is called as a \textit{proper regular splitting} if $ U^{\dag}\geq 0$ and $V\geq 0$ (entry-wise comparison). A proper spitting $A=U-V$ is called as a \textit{proper weak regular splitting} if $ U^{\dag}\geq 0$ and $U^{\dag}V\geq 0$. Again in \cite{jmp}, the authors showed that the iterations scheme \eqref{eqn1.2} converges for a proper weak regular splitting $A=U-V$ if $A^\dag \geq 0$.  But, if a matrix has two splittings, then a splitting that yields the smaller spectral radius of the iteration matrix is preferred. In this direction, several comparison results are proved in the literature (see \cite{jmp},  \cite{mishalt1}, \cite{misarx} and \cite{mismis}). However, if a matrix has many splittings, then comparison process is time consuming.
To avoid this, Mishra \cite{misarx} in 2018 introduced the alternating iteration scheme using two proper splittings $A = U-V = M-N$, and is recalled below:
\begin{equation}\label{alt}
 x^{k+1} = U^{\dag}VM^{\dag}Nx^{k}+U^{\dag}(VM^{\dag}+I)b,
 \end{equation}
motivated by the work of \cite{benz}. Convergence  theory of \eqref{alt} can be found in \cite{misarx, mishalt1, mishalt2}.
 The idea of introducing alternating iteration scheme is inspired from the Alternating Direction Implicit (ADI) method proposed by Peaceman and Rachford \cite{PR-ADI:1955} in 1955 to solve higher dimensional Partial Differential Equations(PDEs). The notion of developing different computationally efficient methods like operator splitting method, parallel implementation of algorithms and alternating iteration schemes for linear systems are inspired from the ADI method. 
In 1959, Birkhoff and Verga \cite{BIR-VER:1959} first reformulated the ADI scheme as an iteration scheme for solving linear systems derived from the discretization of PDEs, using matrix splittings. 
 Later, the alternating scheme based algorithm is applied to a wide variety of problems, like variational problems \cite{ bruch-sloss:1985}, optimization problems and statistical learning algorithms \cite{boyd:2010, shi:2014}, alternating two-stage methods for consistent linear systems to obtain the parallel solution of Markov chains \cite{miga}, saddle-point problems and also for other different type of matrices using Hermitian and Skew-Hermitian Splitting (HSS) \cite{bai:computing, benzi:2009, damm:NLAA2000}. Further, the alternating scheme for the block matrices has been proposed in  \cite{wang:NLAA18}, by using the notion of HSS method. Our aim is to establish the convergence  theory for the alternative schemes applied to the block matrices (as shown in \eqref{eq3}) with some specific structure and properties such that the convergence is faster than the classical iteration schemes for solving the rectangular system (\ref{eqn1}).\\

At one hand, different authors in the literature focused on the problem of improving the convergence rate of the iteration scheme \eqref{eqn1.2}. On the other hand, expanding the convergence theory of the iteration scheme \eqref{eqn1.2} for different types of matrix splittings of $A$ is another topic of research interest. In this direction,
 the notion of double splitting $A=P-R+S$ of a real non-singular matrix $A$ was first introduced by Wo{\'z}nicki \cite{woz} in 1993. Such type of splitting leads to the iterative  scheme  
$$x^{k+1} = P^{-1}Rx^{k}-P^{-1}Sx^{k-1}+P^{-1}b, \;\;\; k>0$$
for solving the non-singular linear system (\ref{eqn1}), when $n = m$. 
%$Ax = b$, $b\in{\mathbb{R}^{n}}$.
Shen and Huang \cite{shn} and Miao {\it et al.} \cite{mio} studied the convergence and comparison of the above iterative scheme for monotone  matrices ({\it $A\in{\mathbb{R}^{n\times{n}}}$ is monotone \cite{coltz} if and only if $A^{-1}$ exists  and $A^{-1}\geq 0$}).
% which appears in linear complementary problems, input-output production
% and growth models,  Markov processes, and while discretizating elliptic PDEs (see \cite{coltz}).)
Moreover,  several convergence and its comparison results exist in the literature for different types of double splittings (see \cite{lcw}, \cite{lish}, \cite{lws}, \cite{msx}, \cite{shn}, \cite{shs},  \cite{sjs},  \cite{wanz}, \cite{zc}).  In 2019,  Li {\it et al.} \cite{li} proposed an alternating scheme using double splittings of a matrix to find an approximate solution of a  real non-singular linear system of equations. \\

The present article aims to revisit the theory alternating schemes using double splittings and to extend this idea to a rectangular matrix setting. In particular, we are interested in introducing another alternating scheme which we call as ADS stationary iteration scheme using double splittings like Li {\it et al.} \cite{li} and then we show that our scheme performs better in certain cases where the scheme proposed in \cite{li} fails. 
To this end, this article is organized in the following manner: Section
2 begins with the description of some useful definitions and preliminary results.
Section
3 proposes two ADS schemes and analyzes its convergence criteria.  Section
4 shows the performance of the proposed iteration scheme
by extensive numerical examples. 

\section{Prerequisites}
In this section, additional notations, definitions and useful results related to non-negative matrices and double proper splittings are presented which are virtually used throughout this article.  We denote $\mathbb{R}^{m\times{n}}$  the set of all real rectangular matrices of order $m\times n$ and  $\mathbb{R}^{n}$ is an $n$-dimensional Euclidean space. The {\it rank} of a matrix $A \in \mathbb{R}^{m\times{n}}$ is denoted by $r(A)$.  Suppose $L$ and $M$ are two complementary subspaces of $\mathbb{R}^{n}$. 
Let $\tilde{P}_{L,M}$ be the projection on $L$ along $M$. Hence $\tilde{P}_{L,M}A = A$ if and only if $R(A)\subseteq L$ and $A\tilde{P}_{L,M} = A$ if and only if $N(A)\supseteq M$.  
For $A\in\mathbb{R}^{m\times{n}}$, the unique matrix $X\in\mathbb{R}^{n\times{m}}$ satisfying the conditions $AXA=A,~XAX=X,~(AX)^{t}=AX~\text{and}~ (XA)^{t}=XA$ is called the {\it Moore-Penrose inverse} of $A$, where $A^{t}$ denotes the transpose of the matrix $A$. The Moore-Penrose inverse always exists, and is denoted by $A^{\dag}$. The matrix $A\in\mathbb{R}^{m\times{n}}$ is called {\it semi-monotone} if $A^{\dag} \geq 0.$ The properties of $A^{\dag}$ which are frequently used in this article: $R(A^{\dag}) = R(A^{t})$; $N(A^{\dag}) = N(A^{t})$; $AA^{\dag} = \tilde{P}_{R(A)}$; $A^{\dag}A = \tilde{P}_{R(A^{t})}$.

\subsection{Spectral radius and  non-negative matrices}\label{sub2.01}
We denote  the set of all eigenvalues of $A\in {\R}^{n \times n}$ as $\sigma(A)$.
The {\it spectral radius} of $A\in\mathbb{R}^{n\times{n}}$, denoted by  $\rho (A)$, is defined as $\rho (A) = \displaystyle{ \max\limits_{1\leq j\leq n} |\lambda_{j}|}$, where $\lambda_{j} \in \sigma(A).$ $A \in {\R}^{m \times n}$ is
called \textit{non-negative} if
 $A \geq 0$.    Let $B,C \in {\R}^{m \times n}$. We write $B \geq C$ if $B-C \geq 0$.
    The next results deal with non-negativity of a matrix and the spectral radius. 
    
\begin{theorem}[Theorem 2.1.11, \cite{bpn}]\label{2.1.2}
Let  $B\in{\mathbb{R}^{n\times{n}}}$, $B\geq 0$, $x\geq 0$ $(x\neq 0)$ and $\alpha$ is a positive scalar. If $\alpha x\leq Bx$, then $\alpha\leq \rho(B)$.
\end{theorem}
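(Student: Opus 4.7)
The statement is a standard fact from Perron--Frobenius theory, and the cleanest elementary proof avoids any eigenvector machinery and uses only iteration plus Gelfand's spectral radius formula. Let me sketch the plan.

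The plan is to exploit the monotonicity of left multiplication by the non-negative matrix $B$. Starting from the hypothesis $\alpha x \leq Bx$, I would multiply both sides by $B$; because $B \geq 0$, entrywise inequalities are preserved, so $\alpha B x \leq B^{2} x$. Combining with the original inequality gives $\alpha^{2} x \leq \alpha B x \leq B^{2} x$. An easy induction then yields $\alpha^{k} x \leq B^{k} x$ for every $k \geq 1$.

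Next I would apply a monotone norm on the non-negative cone, for instance the $\ell_{\infty}$ or $\ell_{1}$ norm, which has the property that $0 \leq u \leq v$ implies $\|u\| \leq \|v\|$. Applying this to $\alpha^{k} x \leq B^{k} x$ gives $\alpha^{k} \|x\| \leq \|B^{k} x\| \leq \|B^{k}\|\,\|x\|$. Since $x \geq 0$ and $x \neq 0$ imply $\|x\| > 0$, I may divide through and take $k$-th roots to obtain $\alpha \leq \|B^{k}\|^{1/k}$ for every $k$.

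Finally I would invoke Gelfand's formula $\lim_{k\to\infty}\|B^{k}\|^{1/k} = \rho(B)$ and let $k\to\infty$ in the inequality above to conclude $\alpha \leq \rho(B)$.

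The only subtle point — and the one I would flag as the main obstacle if one were forced to give a fully self-contained argument — is justifying the passage from the induction inequality to a bound on $\rho(B)$ without citing Gelfand. One can circumvent Gelfand by using the standard fact that $\|B^{k}\|^{1/k}$ is bounded above asymptotically by any number strictly greater than $\rho(B)$ (which follows from putting $B$ in Jordan form or from the Neumann series convergence criterion). Either way the step is routine given standard spectral theory, so the remainder of the proof is purely mechanical.
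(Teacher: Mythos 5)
Your proof is correct. Note first that the paper itself offers no proof of this statement: it is quoted verbatim as Theorem 2.1.11 of Berman and Plemmons \cite{bpn}, so the only comparison available is with the standard argument in that reference, which runs through Perron--Frobenius machinery (the Collatz--Wielandt characterization of $\rho(B)$, or the existence of a nonnegative left eigenvector of $B$ for $\rho(B)$, obtained for general $B\geq 0$ by perturbing to a positive matrix and passing to the limit). Your route is genuinely different and, in a sense, more elementary: the iteration $\alpha^k x\leq B^k x$ is immediate from monotonicity of multiplication by $B\geq 0$ and positivity of $\alpha$, the monotone-norm step $\alpha^k\|x\|\leq\|B^k\|\,\|x\|$ with $\|x\|>0$ is airtight, and Gelfand's formula $\|B^k\|^{1/k}\to\rho(B)$ finishes it. What the eigenvector approach buys is that it stays entirely inside nonnegative-matrix theory and also delivers the companion upper bound ($Bx\leq\beta x$ with $x>0$ implies $\rho(B)\leq\beta$) by the same token; what your approach buys is independence from any existence theorem for eigenvectors, at the price of importing the spectral radius formula (or, as you note, the Jordan-form bound $\|B^k\|^{1/k}\leq\rho(B)+\varepsilon$ eventually, which suffices and is equally standard). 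Either way the argument is complete and the one subtle point you flag is handled adequately.
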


\begin{theorem}[Lemma 2.2, \cite{shn}]\label{2.1.3}
Let ${\bf A }= \begin{pmatrix}
    B & C\\ 
    I & 0
    \end{pmatrix}\geq 0$ and $\rho(B+C)<1$. Then, $\rho({\bf A })<1.$
\end{theorem}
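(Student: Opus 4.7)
The plan is to exploit the block structure of $\mathbf{A}$ to reduce the eigenvalue problem for $\mathbf{A}$ to a one-sided inequality involving $B+C$, which can then be controlled via Theorem \ref{2.1.2} and the hypothesis $\rho(B+C) < 1$. The natural starting point is Perron--Frobenius: since $\mathbf{A} \geq 0$, the spectral radius $\lambda := \rho(\mathbf{A})$ is itself an eigenvalue, and it admits a corresponding nonnegative eigenvector $\binom{x}{y} \neq 0$. Note also that $\mathbf{A} \geq 0$ forces $B \geq 0$ and $C \geq 0$, which is what will later permit componentwise estimates.

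I would first unpack the block eigenvalue equation, which reads $Bx + Cy = \lambda x$ together with $x = \lambda y$. The case $\lambda = 0$ is immediate; otherwise the second relation forces $y \neq 0$ (else $x = 0$ too), so $x,y$ are both nonnegative and nonzero. The structural content of the second equation is that it lets me eliminate $x$ and collapse the $2n$-dimensional eigenproblem into the $n$-dimensional identity
\[
\lambda B y + Cy = \lambda^2 y.
\]

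The key manipulation is to rearrange this identity as
\[
(B+C)y - \lambda y \;=\; \bigl(1 - \tfrac{1}{\lambda}\bigr) Cy,
\]
whose right-hand side is nonnegative precisely when $\lambda \geq 1$. Arguing by contradiction under $\lambda \geq 1$, I obtain $\lambda y \leq (B+C)y$ with $y \geq 0$, $y \neq 0$, and $\lambda > 0$. Theorem \ref{2.1.2} then yields $\lambda \leq \rho(B+C) < 1$, contradicting $\lambda \geq 1$ and forcing $\rho(\mathbf{A}) = \lambda < 1$.

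The main subtlety, rather than a genuine obstacle, is recognizing that one should eliminate $x$ (not $y$) and set up the contradiction at $\lambda \geq 1$; once that viewpoint is adopted the inequality drops out in a single line and plugs directly into Theorem \ref{2.1.2}. The threshold at $1$ in the conclusion is mirrored by the threshold at $1$ inside the factor $(1 - 1/\lambda)$, which is no coincidence — it is exactly the value at which the rearranged identity changes the direction of the comparison between $(B+C)y$ and $\lambda y$, and is what couples the hypothesis $\rho(B+C) < 1$ to the desired conclusion $\rho(\mathbf{A}) < 1$.
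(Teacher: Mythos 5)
Your proof is correct and is essentially the standard argument for this lemma (the paper only quotes it from \cite{shn}, whose proof proceeds the same way): apply Theorem \ref{2.1.4} to get a nonnegative eigenvector for $\lambda=\rho(\mathbf{A})$, eliminate $x$ via $x=\lambda y$ to obtain $\lambda y = By + \lambda^{-1}Cy \le (B+C)y$ when $\lambda\ge 1$, and invoke Theorem \ref{2.1.2} for the contradiction. No gaps.
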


\begin{theorem}[Theorem 2.20, \cite{var}]\label{2.1.4}
Let $A\in{\mathbb{R}^{n\times{n}}}$ and $A \geq 0$. Then\\
$(i)$ $A$ has a non-negative real eigenvalue equal to its spectral radius.\\
$(ii)$ there exists a non-negative eigenvector for its spectral radius.
\end{theorem}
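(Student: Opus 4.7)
The plan is to prove this classical Perron--Frobenius theorem in two stages: first for strictly positive $A$ via a fixed-point argument, then extending to $A\geq 0$ by perturbation and compactness.

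\textbf{Stage 1 (strictly positive case).} For $A>0$, I would apply Brouwer's fixed point theorem to the continuous self-map $T(x)=Ax/(e^{t}Ax)$ on the compact convex simplex $\Delta=\{x\in\mathbb{R}^{n}:x\geq 0,\ e^{t}x=1\}$, where $e=(1,\dots,1)^{t}$. Strict positivity makes $Ax>0$ on $\Delta$, so $T$ is well defined and continuous. A fixed point $v\in\Delta$ satisfies $Av=\lambda v$ with $\lambda=e^{t}Av>0$, and in fact $v=Av/\lambda>0$. The same construction applied to $A^{t}>0$ supplies $u>0$ and $\mu>0$ with $u^{t}A=\mu u^{t}$.

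\textbf{Stage 2 (identification with $\rho(A)$).} For any eigenpair $(\nu,w)$ of $A$, entry-wise absolute values in $Aw=\nu w$ combined with $A\geq 0$ give $A|w|\geq|\nu|\,|w|$. Pairing with the positive row $u^{t}$ yields $\mu\,u^{t}|w|=u^{t}A|w|\geq|\nu|\,u^{t}|w|$; since $u>0$ and $|w|\neq 0$ force $u^{t}|w|>0$, I conclude $|\nu|\leq\mu$, hence $\rho(A)\leq\mu$. Combined with $\mu\in\sigma(A^{t})=\sigma(A)\Rightarrow\mu\leq\rho(A)$, this forces $\mu=\rho(A)$. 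The symmetric argument, pairing any eigenpair of $A^{t}$ against $v^{t}$, gives $\lambda=\rho(A^{t})=\rho(A)$, and then $v>0$ is a Perron eigenvector.

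\textbf{Stage 3 (perturbation to $A\geq 0$).} Set $A_{\epsilon}=A+\epsilon J$ with $J$ the all-ones matrix; then $A_{\epsilon}>0$, so Stages 1 and 2 furnish $v_{\epsilon}\in\Delta$ with $A_{\epsilon}v_{\epsilon}=\rho(A_{\epsilon})v_{\epsilon}$. Compactness of $\Delta$ and boundedness of $\rho(A_{\epsilon})$ on $\epsilon\in(0,1]$ let me extract $\epsilon_{k}\downarrow 0$ with $v_{\epsilon_{k}}\to v^{*}\in\Delta$ and $\rho(A_{\epsilon_{k}})\to\rho^{*}$. Passing to the limit in the eigenrelation gives $Av^{*}=\rho^{*}v^{*}$ with $v^{*}\geq 0$, $v^{*}\neq 0$, so $\rho^{*}\in\sigma(A)$ and $\rho^{*}\leq\rho(A)$. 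Conversely, for any nonzero eigenvalue $\nu$ of $A$ with eigenvector $w$, $A_{\epsilon}|w|\geq A|w|\geq|\nu|\,|w|$, so Theorem~\ref{2.1.2} gives $|\nu|\leq\rho(A_{\epsilon})$; letting $\epsilon_{k}\to 0$ and taking the supremum over $\nu$ yields $\rho(A)\leq\rho^{*}$. Hence $\rho^{*}=\rho(A)$, which establishes both (i) and (ii).

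\textbf{Main obstacle.} The substantive step is Stage 2: Brouwer only produces \emph{some} positive eigenvalue, and identifying it with the spectral radius requires the pairing against the left Perron eigenvector produced by the parallel fixed-point argument on $A^{t}$. Once this is in hand, Stage 3 reduces to a routine continuity/compactness passage, with Theorem~\ref{2.1.2} supplying the monotonicity bound $\rho(A)\leq\rho(A_{\epsilon})$ that pins down the limiting eigenvalue.
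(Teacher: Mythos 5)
Your proof is correct. Note, however, that the paper does not prove this statement at all: it is quoted verbatim as Theorem 2.20 of Varga's \emph{Matrix Iterative Analysis} and used as a black-box prerequisite, so there is no in-paper argument to compare against. What you have supplied is a complete, self-contained proof of the weak Perron--Frobenius theorem. Your route (Brouwer's fixed point theorem on the simplex for the strictly positive case, identification of the fixed-point eigenvalue with $\rho(A)$ via the left eigenvector of $A^{t}$, then the perturbation $A_{\epsilon}=A+\epsilon J$ with a compactness passage to the limit) is a standard alternative to Varga's own proof, which establishes the irreducible case first via the Collatz--Wielandt variational characterization $\rho(A)=\max_{x\geq 0,\,x\neq 0}\min_{i:x_i>0}(Ax)_i/x_i$ and then performs the same $\epsilon J$ perturbation. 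The two approaches buy different things: the variational route yields the monotonicity of $\rho$ under entrywise increase as an immediate by-product (which is part (3) of Varga's theorem), whereas your Brouwer argument is arguably more transparent but requires the separate Stage 2 pairing against the left Perron vector to pin down the eigenvalue as the spectral radius --- a step you correctly identify as the substantive one. All three stages check out, including the edge cases ($v^{*}\neq 0$ is guaranteed by $v^{*}\in\Delta$, and the case $\rho(A)=0$ is covered since $\rho^{*}\geq 0$ and $\rho^{*}\in\sigma(A)$ force $\rho^{*}=0$ there), and your use of Theorem~\ref{2.1.2} in Stage 3 is a legitimate application of the paper's quoted lemma.
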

\newpage
\subsection{Double proper splittings}\label{sub2.02}

Motivated by the standard iterative methods like Jacobi,
Gauss-Seidel, SOR etc., Wo\'{z}nicki   \cite{woz} introduced double splitting theory for finding iteration solution of non-singular linear system $Ax=b$. Neumann \cite{neum} extended the non-singular case to singular linear system which he named as 3-part splitting. A double splitting $A = P-R+S$ of $A\in{\mathbb{R}^{m\times{n}}}$ is called {\it double proper splitting} if $R(P) = R(A)$ and $N(P) = N(A)$.
Further, Jena \textit{et al.} \cite{jmp} introduced two subclasses of double proper splittings which are recalled below.
A double proper splitting $A = P-R+S$ is called a \textit{double proper regular splitting} \cite{jmp} if $P^{\dag}\geq 0$, $R\geq 0$ and $S\leq 0$.
 The next subclass contains the above one.
 A double proper splitting $A = P-R+S$ is called a \textit{double proper weak regular splitting } \cite{jmp} if $P^{\dag}\geq 0$, $P^{\dag}R\geq 0$ and $P^{\dag}S\leq 0$.
Mishra \cite{dm} again introduced another subclass which contains the above two subclasses. He named it as double proper nonnegative splitting. However, we call the same as double proper weak splitting as the conditions are weaker than the earlier two.
 A double proper splitting $A = P-R+S$ is called \textit{double proper weak splitting} if $P^{\dag}R\geq 0$ and $P^{\dag}S\leq 0$.
  In the non-singular matrix setting, the above definitions coincide with double regular splitting (or regular double splitting \cite{shn}), double weak regular  splitting (or weak regular double splitting \cite{shn}),  and double weak splitting (or double nonnegative splitting \cite{sjs}), respectively. 
 Analogous to the non-singular case,  the following iterative scheme spanned in three iterates (known as \textit{double iteration scheme}) is proposed by Jena \textit{et al.} \cite{jmp} by the help of double proper splitting $A = P-R+S$:
\begin{equation}\label{eq2}
 x^{k+1} = P^{\dag}Rx^{k}-P^{\dag}Sx^{k-1}+P^{\dag}b, ~~k>0.
 \end{equation}
The equivalent block-matrix form \cite{jmp} of (\ref{eq2}) is\\
\begin{equation}\label{eq3}
{\bf x^{k+1}} = {\bf T } {\bf x^k } + {\bf b},
\end{equation}
where ${\bf x^{k+1}}= \begin{pmatrix}
    x^{k+1} \\
    x^{k}
  \end{pmatrix}$, ${\bf x^{k}}= \begin{pmatrix}
    x^{k} \\
    x^{k - 1}
  \end{pmatrix}$, ${\bf T}= \begin{pmatrix}
    P^{\dag}R & -P^{\dag}S \\
    I & 0
  \end{pmatrix}$, ${\bf b}= \begin{pmatrix}
    P^{\dag}b \\
    0
  \end{pmatrix}$
and $I$ denotes the identity matrix of order $n$. Then, the iteration scheme (\ref{eq3})  converges to  $A^{\dag}b$ of (\ref{eqn1}) if $\rho({\bf T })<1$.
Here the spectral radius of block matrix {\bf T} is the spectral radius of the full matrix $T$. Rest of the manuscript, we will write $\rho({ T })$ instead of $\rho({\bf T })$ for any block matrix {\bf T.}
The next two results present the convergence criteria for double proper regular (or weak regular) splittings and double proper weak  splittings. But, interested reader may refer    \cite{jmp}, \cite{dm}, \cite{balim} and \cite{kur} for more detailed convergence theory of \eqref{eq3}.

\begin{theorem}[Theorem 3.6, \cite{jmp}]\label{2.1.1}
Let $A^{\dag}\geq 0$. If $A = P-R+S$ be a double proper regular (or weak regular) splitting  of $A\in{\mathbb{R}^{m\times{n}}}$, then $\rho(T)<1.$
\end{theorem}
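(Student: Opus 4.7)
The plan is to reduce the block-matrix spectral radius computation to a single-step one via Theorem~\ref{2.1.3}. Write $B := P^{\dag}R$ and $C := -P^{\dag}S$, so that
\[
T = \begin{pmatrix} B & C \\ I & 0 \end{pmatrix}.
\]
In the weak regular case the hypotheses give $P^{\dag}\ge 0$, $P^{\dag}R\ge 0$, and $P^{\dag}S\le 0$, hence $B\ge 0$, $C\ge 0$, and $T$ is entry-wise non-negative. By Theorem~\ref{2.1.3} it therefore suffices to prove $\rho(B+C)<1$.

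For that I would collapse the double splitting back to an induced single proper splitting. Setting $W := R - S$ gives $A = P - W$, and since $A = P - R + S$ is already a double proper splitting we have $R(P) = R(A)$ and $N(P) = N(A)$, so $A = P - W$ is itself a proper splitting of $A$. Moreover $P^{\dag}W = P^{\dag}R - P^{\dag}S \ge 0$, which together with $P^{\dag}\ge 0$ shows that $A = P - W$ is in fact a proper weak regular splitting. Invoking the classical convergence result for proper weak regular splittings recalled in Section~1 (the theorem of Jena \emph{et al.}): if $A^{\dag}\ge 0$ and $A = U - V$ is a proper weak regular splitting, then $\rho(U^{\dag}V)<1$. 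Applied with $U = P$ and $V = W$, the hypothesis $A^{\dag}\ge 0$ yields $\rho(P^{\dag}W)<1$. Since $P^{\dag}W = B + C$, Theorem~\ref{2.1.3} then gives $\rho(T)<1$.

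The regular case is automatically subsumed, since $P^{\dag}\ge 0$ together with $R\ge 0$ and $S\le 0$ immediately yields $P^{\dag}R\ge 0$ and $P^{\dag}S\le 0$. I do not expect any real obstacle: the argument is essentially a packaging step. The one point that needs a careful check is that the auxiliary single splitting $A = P - W$ inherits exactly the weak regular structure (rather than only the proper structure), because that is what permits a clean appeal to the one-step convergence theorem without any extra hypothesis on $R$ and $S$ separately.
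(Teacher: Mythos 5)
Your proof is correct and is essentially the standard argument for this result (the paper only quotes it from Jena \emph{et al.}\ without proof, but the cited source proceeds exactly this way): reduce the double splitting to the induced single proper weak regular splitting $A=P-(R-S)$, apply the one-step convergence theorem for proper weak regular splittings of a semi-monotone matrix to get $\rho(P^{\dag}R-P^{\dag}S)<1$, and conclude via Theorem~\ref{2.1.3}. The points you flag for care --- non-negativity of the block matrix and the fact that $A=P-W$ inherits the weak regular structure --- are exactly the ones that make the reduction work, and both check out.
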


\begin{theorem}[Theorem 4.5, \cite{dm}]\label{2.1.10}
Let $A^{\dag}P\geq 0$. If $A = P-R+S$ be a double proper weak splitting of $A\in{\mathbb{R}^{m\times{n}}}$, then $\rho( T )<1.$
\end{theorem}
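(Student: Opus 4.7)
The plan is to apply Theorem \ref{2.1.3} to the block matrix $T$, which reduces the problem to showing that $\rho(H) < 1$, where $H := P^{\dag}R - P^{\dag}S$. The hypotheses of a double proper weak splitting give $P^{\dag}R \geq 0$ and $-P^{\dag}S \geq 0$, so both $T \geq 0$ and $H \geq 0$. Using $A = P - R + S$, I would rewrite
\begin{equation*}
H = P^{\dag}(P - A) = P^{\dag}P - P^{\dag}A,
\end{equation*}
which acts as $I - P^{\dag}A$ on $R(A^{t})$ and as zero on $N(A) = N(P)$. The proper splitting conditions $R(P) = R(A)$ and $N(P) = N(A)$ translate via the Moore--Penrose identities to $PP^{\dag} = AA^{\dag}$ and $P^{\dag}P = A^{\dag}A$, both of which I would record at the start as the algebraic workhorses.

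Next I would invoke Theorem \ref{2.1.4} on the non-negative matrix $H$ to obtain $x \geq 0$, $x \neq 0$, with $Hx = \lambda x$ and $\lambda = \rho(H) \geq 0$. The case $\lambda = 0$ is immediate. For $\lambda > 0$, I would observe that $R(H) \subseteq R(P^{\dag}) = R(P^{t}) = R(A^{t})$, so $x = \lambda^{-1} Hx \in R(A^{t})$; consequently $P^{\dag}Px = A^{\dag}Ax = x$, and the eigenvalue equation collapses to
\begin{equation*}
(1-\lambda)\,x \;=\; P^{\dag}A\,x.
\end{equation*}

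The key manipulation is then to pre-multiply by $A^{\dag}P$ and simplify using $PP^{\dag} = AA^{\dag}$:
\begin{equation*}
(1-\lambda)\,A^{\dag}P x \;=\; A^{\dag}(PP^{\dag})A\,x \;=\; A^{\dag}(AA^{\dag})A\,x \;=\; A^{\dag}A\,x \;=\; x.
\end{equation*}
With this identity in hand, the hypothesis $A^{\dag}P \geq 0$ combined with $x \geq 0$ forces $A^{\dag}Px \geq 0$, after which $\lambda = 1$ would give $x = 0$ and $\lambda > 1$ would give $x \leq 0$; both contradict $x \geq 0$, $x \neq 0$. Therefore $\lambda < 1$, i.e.\ $\rho(H) < 1$, and Theorem \ref{2.1.3} applied to $T = \left(\begin{smallmatrix} P^{\dag}R & -P^{\dag}S \\ I & 0 \end{smallmatrix}\right)$ yields $\rho(T) < 1$.

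The main obstacle I anticipate is not conceptual but notational: cleanly justifying $P^{\dag}P = A^{\dag}A$ and $PP^{\dag} = AA^{\dag}$ from the range/nullspace conditions of the double proper splitting, and carefully confining the Perron-type eigenvector to $R(A^{t})$ before collapsing $A^{\dag}PP^{\dag}A$ to $A^{\dag}A$. Once that bookkeeping is handled, the sign argument using $A^{\dag}P \geq 0$ is short.
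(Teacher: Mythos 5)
Your proof is correct. Note that the paper itself does not prove this statement; it is quoted verbatim as Theorem 4.5 of the reference \cite{dm} and used as a prerequisite, so there is no in-paper argument to compare against. Your route is the standard one for results of this type: reduce the block matrix $T$ to the collapsed matrix $H = P^{\dag}R - P^{\dag}S \geq 0$ via Theorem \ref{2.1.3}, rewrite $H = P^{\dag}P - P^{\dag}A$ using $R - S = P - A$, and run a Perron eigenvector argument. The key bookkeeping steps all check out: $PP^{\dag} = AA^{\dag}$ and $P^{\dag}P = A^{\dag}A$ follow because orthogonal projectors onto equal subspaces coincide (using $R(P)=R(A)$ and $R(P^{t}) = N(P)^{\perp} = N(A)^{\perp} = R(A^{t})$); the Perron vector $x$ lands in $R(A^{t})$ because $R(H) \subseteq R(P^{\dag}) = R(A^{t})$ when $\lambda > 0$; and the identity $(1-\lambda)A^{\dag}Px = x$ together with $A^{\dag}Px \geq 0$ and $x \geq 0$, $x \neq 0$ cleanly rules out $\lambda \geq 1$. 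Nothing is missing.
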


\newpage
\section{Main results}

\subsection{Formulation of Alternating Double Splitting (ADS) schemes}\label{sub3.01}

Motivated by the work of Li {\it et al.} \cite{li} where the authors introduced Alternating Double Splitting (ADS) scheme using double splittings to solve a non-singular linear system, and the work of Jena \textit{et al.}  \cite{jmp},  we consider two double iterative schemes with respect to two double proper splittings of $A\in{\mathbb{R}^{m\times{n}}}$, respectively  as: $A = P_1-R_1+S_1 = P_2-R_2+S_2$ are 
\begin{equation}\label{eq4.1}
  x^{k+1/2} = P_1^{\dag}R_1x^{k}-P_1^{\dag}S_1x^{k-1/2}+P_1^{\dag}b,  
\end{equation}and 
\begin{equation}\label{eq4.2}
  x^{k+1} = P_2^{\dag}R_2x^{k+1/2}-P_2^{\dag}S_2x^{k}+P_2^{\dag}b.
\end{equation}
The corresponding block iterative schemes can be written in two different ways as mentioned below for $i=1,2$:
$${\bf T}_i= \begin{pmatrix}
    {P_i}^{\dag}R_i & -{P_i}^{\dag}S_i \\
    I & 0
  \end{pmatrix}, {\bf G}_i= \begin{pmatrix}
    I & 0 \\
    {P_i}^{\dag}{R_i} & -{P_i}^{\dag}S_i
  \end{pmatrix} \mbox{ and } {\bf H}_i = \begin{pmatrix}
    {P_i}^{\dag}{R_i} & -{P_i}^{\dag}S_i \\
    0 & I
  \end{pmatrix}.$$
  From each pair of block forms, we are going to formulate next a ADS scheme. 
\subsubsection{TG-ADS scheme}
\begin{equation}\label{eqq1}
\begin{cases} 
{\bf x}^{k+1/2} &= 
\begin{pmatrix}
    x^{k+1/2} \\
    x^{k}
  \end{pmatrix}
  = \begin{pmatrix}
    I & 0\\
    P_1^{\dag}R_1 & -P_1^{\dag}S_1 
    \end{pmatrix} \begin{pmatrix}
    x^{k} \\
    x^{k-1/2}
  \end{pmatrix}
  +  \begin{pmatrix}
     0\\
     P_1^{\dag}b
  \end{pmatrix}\\
  &= {\bf G}_1{\bf x}^{k}+{\bf b}_1 \\[2ex]
{\bf x}^{k+1} &= 
\begin{pmatrix}
    x^{k+1} \\
    x^{k+1/2}
  \end{pmatrix}
  = \begin{pmatrix}
    P_2^{\dag}R_2 & -P_2^{\dag}S_2\\ 
    I & 0
    \end{pmatrix} \begin{pmatrix}
    x^{k+1/2} \\
    x^{k}
  \end{pmatrix}
  +  \begin{pmatrix}
     P_2^{\dag}b\\
     0
     \end{pmatrix}\\
  &= {\bf T}_2{\bf x}^{k + 1/2}+{\bf b}_2.
  \end{cases}
\end{equation}
To do the convergence analysis of   (\ref{eqq1}), we next formulate a single-step double iteration  scheme by composing the half-step double iteration schemes in (\ref{eqq1}).
\begin{equation}\label{eq5}
 {\bf x}^{k+1} = %{\bf T}_2{\bf x}^{k+1/2}+{\bf b}_2 = 
 {\bf T}_2{\bf G}_{1}{\bf x}^{k}+{\bf T}_2 {\bf b}_1 + {\bf b}_2={\bf W}_{12}{\bf x}^{k}+{\bf b}_3.
\end{equation}
We call the above scheme as \textit{TG-ADS scheme}. The iteration matrix ${\bf W}_{12}$  and the vector ${\bf b_3}$ of the TG-ADS scheme are as follows: 
\begin{equation*}
 {\bf W}_{12} = \begin{pmatrix}
    P_2^{\dag}R_2-P_2^{\dag}S_2P_1^{\dag}R_1 & P_2^{\dag}S_2P_1^{\dag}S_1\\ 
    I & 0
    \end{pmatrix} \text{ and } {\bf b_3} = \begin{pmatrix}
     P_2^{\dag}(I - S_2P_1^{\dag})b\\
     0
  \end{pmatrix}.
\end{equation*}
\subsubsection{HT-ADS scheme}
   
Alike the half-step double iteration schemes used in TG-ADS scheme, we introduce a new ADS scheme by defining another pair of half-step double iteration schemes.

\begin{equation}\label{eqq2}
\begin{cases} 
{\bf x}^{k+1/2} &=
\begin{pmatrix}
    x^{k+1/2} \\
    x^{k}
  \end{pmatrix}
  = \begin{pmatrix}
    P_1^{\dag}R_1 & -P_1^{\dag}S_1 \\
    I & 0\\
    \end{pmatrix} \begin{pmatrix}
    x^{k} \\
    x^{k-1/2}
  \end{pmatrix}
  +  \begin{pmatrix}
     P_1^{\dag}b \\
     0
  \end{pmatrix}\\[2ex]
  &= {\bf T}_1{\bf x}^{k}+{\bf b}_1 \\[2ex]

{\bf x}^{k+1} &= 
\begin{pmatrix}
    x^{k+1} \\
    x^{k+1/2}
  \end{pmatrix}
  = \begin{pmatrix}
    P_2^{\dag}R_2 & -P_2^{\dag}S_2\\ 
    0 & I
    \end{pmatrix} \begin{pmatrix}
    x^{k+1/2} \\
    x^{k}
  \end{pmatrix}
  +  \begin{pmatrix}
     P_2^{\dag}b\\
     0
     \end{pmatrix}\\[2ex]
  &= {\bf H}_2{\bf x}^{k+\frac{1}{2}}+{\bf b}_2.
  \end{cases}
\end{equation}
 The corresponding single-step double iteration scheme is derived as follows:
\begin{equation}\label{pp1.1}
{\bf x}^{k+1} = {\bf H}_2{\bf T}_{1}{\bf x}^{k}+{\bf H}_2 {\bf b}_1 + {\bf b}_2 = {{\WW}_{12}}{\bf x}^{k}+{\bf b_4}.
\end{equation}
This scheme is called as  \textit{HT-ADS scheme}. The iteration matrix ${\bf \WW}_{12}$  and the vector ${\bf b_4}$ of the HT-ADS scheme are 
\begin{equation*}
     {\WW}_{12} = \begin{pmatrix}
    P_2^{\dag}R_2P_1^{\dag}R_1-P_2^{\dag}S_2 & -P_2^{\dag}R_2P_1^{\dag}S_1\\ 
    I & 0
    \end{pmatrix} \text{ and } {\bf b_4} = \begin{pmatrix}
     P_2^{\dag}(R_2P_1^{\dag}+I)b\\
     0
  \end{pmatrix},
\end{equation*}
respectively. The iteration schemes $(\ref{eq5})$ and $(\ref{pp1.1})$ are called as {\it ADS alternating iteration schemes} (ADS schemes)  in its block form. 
\begin{remark}
HT-ADS scheme in its block form  yields a three-term recurrence scheme  
\begin{equation}\label{DAHS2}
 x^{k+1} = (P_2^{\dag}R_2P_1^{\dag}R_1-P_2^{\dag}S_2)x^{k}-P_2^{\dag}R_2P_1^{\dag}S_1x^{k-1}+P_2^{\dag}R_2P_1^{\dag}b+P_2^{\dag}b,     
 \end{equation} 
 which is also formed by eliminating $x^{k+1/2}$ from (\ref{eq4.2}). However, one can verify that TG-ADS scheme in its block form which extends the Alternating Double Splitting method proposed by Li {\it et al.} \cite{li} does not coincide with \eqref{DAHS2}.
\end{remark}

The iteration schemes \eqref{eq5} and \eqref{pp1.1} converge for any initial guess ${\bf x}^{0}$ to  $A^{\dag}b$ if and only if $\rho(W_{12})<1$ and $\rho(\mathcal{W}_{12})<1$, respectively \cite{jmp}. The next section provides different sufficient conditions for the convergence of the above types of ADS schemes.

\subsection{Convergence analysis}\label{sec:cgs}

We show the convergence of each ADS scheme
 by considering the spectral radius of another iteration matrix for solving a new preconditioned system as both the iteration matrices have the same spectral radius. This is shown next.
\subsubsection{TG-ADS scheme}\label{TG:cgs}
Let $A = P_1-R_1+S_1 = P_2-R_2+S_2$ be two double proper splittings of $A\in{\mathbb{R}^{m\times{n}}}$ with $N(S_2)\supseteq N(P_2)$, $R(S_2)\subseteq R(P_2)$ and $1 \notin \sigma(S_2P_1^{\dag})$. So, $I-S_2P_1^{\dag}$ is non-singular. Let us consider the preconditioned linear system 
\begin{equation}\label{pc1}
    \widehat{A}x = \widehat{b},
\end{equation}
where $\widehat{A} = (I-S_2P_1^{\dag})A$ and $\widehat{b} = (I-S_2P_1^{\dag})b$.  Simplifying $\widehat{A}$, we have
\begin{eqnarray*}
\widehat{A} &=&  (I-S_2P_1^{\dag})A\\
&=& A-S_2P_1^{\dag}A\\
&=& P_2-R_2+S_2-S_2P_1^{\dag}(P_1-R_1+S_1)\\
&=& P_2-R_2+S_2-S_2+S_2P_1^{\dag}R_1-S_2P_1^{\dag}S_1\\
&=& P_2-(R_2-S_2P_1^{\dag}R_1)+(-S_2P_1^{\dag}S_1)\\
&=& \widehat{P}-\widehat{R}+\widehat{S}
\end{eqnarray*}
is a double splitting of $\widehat{A}$. For convenience, we denote $\widehat{P} = P_2$, $\widehat{R} = R_2-S_2P_1^{\dag}R_1$ and $\widehat{S} = -S_2P_1^{\dag}S_1$. Next, we have to show that $\widehat{A} = \widehat{P}-\widehat{R}+\widehat{S}$ is a double proper splitting of $\widehat{A}$.  Let $x\in N(\widehat{A})$. This implies $\widehat{A}x = 0$, i.e., $(I-S_2P_1^{\dag})Ax = 0$. Pre-multiplying $(I-S_2P_1^{\dag})^{-1}$ to $(I-S_2P_1^{\dag})Ax = 0$ yields $Ax = 0$. So $N(\widehat{A}) \subseteq N(\widehat{P})$. Again, suppose that $x\in  N(\widehat{P})= N(P_2) = N(A)$. This gives $Ax =0$ which yields $(I-S_2P_1^{\dag})^{-1}\widehat{A}x = 0$. So, we get
$\widehat{A}x = 0$ which implies $N(\widehat{P}) \subseteq N(\widehat{A})$. Hence $N(\widehat{A}) = N(\widehat{P})$.
Next, to show that $R(\widehat{A}) = R(\widehat{P})$. From (\ref{pc1}), we obtain $\widehat{A} =  A-S_2P_1^{\dag}A = A-P_2P_2^{\dag}S_2P_1^{\dag}A = A-AA^{\dag}S_2P_1^{\dag}A = A(I-A^{\dag}S_2P_1^{\dag}A)$. This gives $R(\widehat{A})\subseteq R(A) = R(P_2) = R(\widehat{P})$. Also, $r(\widehat{A}) = r(\widehat{P})$. Hence $R(\widehat{A}) = R(\widehat{P})$.  Thus, $\widehat{A} = \widehat{P}-\widehat{R}+\widehat{S}$ is a  double proper splitting of $\widehat{A}$ and the corresponding double iterative scheme for the preconditioned system (\ref{pc1}) can be written as
\begin{equation}\label{preeq1}
  x^{k+1} = \widehat{P}^{\dag}\widehat{R}x^{k}-\widehat{P}^{\dag}\widehat{S}x^{k-1}+\widehat{P}^{\dag}\widehat{b},~~k>0   
\end{equation}
i.e.,
\begin{equation*}
{\bf x}^{k+1}
  = \begin{pmatrix}
    \widehat{P}^{\dag}\widehat{R} & -\widehat{P}^{\dag}\widehat{S} \\
    I & 0
  \end{pmatrix} 
  {\bf x}^{k}
  +  \begin{pmatrix}
    \widehat{P}^{\dag}\widehat{b} \\
    0
  \end{pmatrix}.
\end{equation*}
The iteration matrix is
$$\widehat{\bf T} = \begin{pmatrix}
    \widehat{P}^{\dag}\widehat{R} & -\widehat{P}^{\dag}\widehat{S} \\
    I & 0
  \end{pmatrix}\\
    =
    \begin{pmatrix}
    P_2^{\dag}R_2-P_2^{\dag}S_2P_1^{\dag}R_1 & P_2^{\dag}S_2P_1^{\dag}S_1\\ 
    I & 0
    \end{pmatrix}\\
    = {\bf W}_{12}$$
and 
\begin{eqnarray*}
 \begin{pmatrix}
    \widehat{P}^{\dag}\widehat{b} \\
    0
  \end{pmatrix}
  &=& \begin{pmatrix}
    P_2^{\dag}(I-S_2P_1^{\dag})b \\
    0
  \end{pmatrix}\\
  &=& \begin{pmatrix}
    P_2^{\dag}R_2 & -P_2^{\dag}S_2\\ 
    I & 0
    \end{pmatrix}\begin{pmatrix}
    0\\
    P_1^{\dag}b
  \end{pmatrix}+\begin{pmatrix}
    P_2^{\dag}b\\
    0
  \end{pmatrix} = {\bf T}_2{\bf b}_1+{\bf b}_2.
\end{eqnarray*}
 We remark that
 if $A= P_1-R_1+S_1 = P_2-R_2+S_2$ are two double proper regular (or weak regular or weak) splittings of $A\in{\mathbb{R}^{m\times{n}}}$ with $N(S_2)\supseteq N(P_2)$, $R(S_2)\subseteq R(P_2)$ and $1 \notin \sigma(S_2P_1^{\dag})$, then $\widehat{A} = \widehat{P}-\widehat{R}+\widehat{S}$ is also a double proper regular (weak regular or  weak) splittings of $A\in{\mathbb{R}^{m\times{n}}}$.
 Hence, an immediate consequence of Theorem \ref{2.1.1} which generalizes Theorem $2.6$, \cite{li} is as follows.

\begin{theorem}\label{conv2}
Let $A= P_1-R_1+S_1 = P_2-R_2+S_2$ be two double proper regular (weak regular) splittings of a semi-monotone matrix $A\in{\mathbb{R}^{m\times{n}}}$. If $N(S_2)\supseteq N(P_2)$, $R(S_2)\subseteq R(P_2)$, $1 \notin \sigma(S_2P_1^{\dag})$ and $\widehat{A}^{\dag}\geq 0$, then $\rho(W_{12})<1$.  
\end{theorem}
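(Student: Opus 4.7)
The key work is already done in the lead-up to the theorem: the paper has built the preconditioned system $\widehat{A}x=\widehat{b}$, exhibited $\widehat{A}=\widehat{P}-\widehat{R}+\widehat{S}$ as a double proper splitting, and verified that its iteration matrix $\widehat{T}$ coincides with $W_{12}$. My plan is therefore to reduce the claim directly to Theorem \ref{2.1.1} applied to the splitting $\widehat{A}=\widehat{P}-\widehat{R}+\widehat{S}$. The only real piece of work left is to check that this induced splitting inherits the regular (respectively weak regular) structure from the two original splittings, since then the hypothesis $\widehat{A}^{\dag}\geq 0$ together with Theorem \ref{2.1.1} gives $\rho(\widehat{T})<1$, and the identification $\widehat{T}=W_{12}$ finishes the argument.

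I would first handle the regular case. By definition $\widehat{P}=P_{2}$, so $\widehat{P}^{\dag}=P_{2}^{\dag}\geq 0$ by hypothesis. For $\widehat{R}=R_{2}-S_{2}P_{1}^{\dag}R_{1}$, I would observe that $R_{2}\geq 0$, $R_{1}\geq 0$, $P_{1}^{\dag}\geq 0$ and $-S_{2}\geq 0$, so the product $-S_{2}P_{1}^{\dag}R_{1}$ is a product of nonnegative matrices, yielding $\widehat{R}\geq 0$. Similarly $\widehat{S}=-S_{2}P_{1}^{\dag}S_{1}$ is a product of two nonnegative matrices ($-S_{2}$ and $P_{1}^{\dag}$) with the nonpositive $S_{1}$, hence $\widehat{S}\leq 0$. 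Thus $\widehat{A}=\widehat{P}-\widehat{R}+\widehat{S}$ is a double proper regular splitting.

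For the weak regular case I cannot use entrywise positivity of $\widehat{R},\widehat{S}$, so I would instead compute $\widehat{P}^{\dag}\widehat{R}=P_{2}^{\dag}R_{2}-(P_{2}^{\dag}S_{2})(P_{1}^{\dag}R_{1})$ and note that $P_{2}^{\dag}R_{2}\geq 0$, $P_{1}^{\dag}R_{1}\geq 0$, and $-P_{2}^{\dag}S_{2}\geq 0$ by the weak regular hypotheses, so $\widehat{P}^{\dag}\widehat{R}\geq 0$. Likewise $\widehat{P}^{\dag}\widehat{S}=-(P_{2}^{\dag}S_{2})(P_{1}^{\dag}S_{1})$ is a product of the nonpositive $P_{2}^{\dag}S_{2}$ with the nonpositive $P_{1}^{\dag}S_{1}$, preceded by a minus sign, so $\widehat{P}^{\dag}\widehat{S}\leq 0$. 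This gives the double proper weak regular structure of $\widehat{A}=\widehat{P}-\widehat{R}+\widehat{S}$.

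With either regular or weak regular structure confirmed, and $\widehat{A}^{\dag}\geq 0$ by hypothesis, Theorem \ref{2.1.1} applied to $\widehat{A}$ gives $\rho(\widehat{T})<1$; since $\widehat{T}=W_{12}$ the conclusion follows. The potential obstacle I expected would be verifying that $\widehat{A}=\widehat{P}-\widehat{R}+\widehat{S}$ is a \emph{proper} splitting (range and null-space conditions on $\widehat{P}$ with respect to $\widehat{A}$), but this is already carried out in the lead-up to the theorem using the invertibility of $I-S_{2}P_{1}^{\dag}$ together with $R(S_{2})\subseteq R(P_{2})$ and $N(S_{2})\supseteq N(P_{2})$; so nothing nontrivial remains beyond the sign bookkeeping above.
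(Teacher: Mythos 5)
Your proposal is correct and follows exactly the paper's route: the paper states the theorem as an immediate consequence of the preceding remark (that the induced splitting $\widehat{A}=\widehat{P}-\widehat{R}+\widehat{S}$ inherits the regular/weak regular structure) together with Theorem \ref{2.1.1} and the identification $\widehat{\bf T}={\bf W}_{12}$. Your sign bookkeeping simply makes explicit the verification that the paper leaves as a remark, and it is accurate in both the regular and weak regular cases.
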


The next example shows that the converse of the above theorem is not true.

\begin{example}\label{ex1.11}
Let $A = \begin{bmatrix}
\frac{3}{27} & -\frac{5}{54} \\
-\frac{5}{54} & \frac{3}{27}
\end{bmatrix} = P_1-R_1+S_1 = P_2-R_2+S_2$ be two double regular splittings of a monotone matrix $A$, where 

\begin{equation*}
P_1 = 
\begin{bmatrix}
\frac{4}{27} & -\frac{2}{27} \\
-\frac{2}{27} & \frac{4}{27}
\end{bmatrix},
R_1 = 
\begin{bmatrix}
 \frac{1}{27} & 0 \\
  0 & \frac{1}{27}
\end{bmatrix},
S_1 = 
\begin{bmatrix}
0 & -\frac{1}{54} \\
-\frac{1}{54} & 0
\end{bmatrix},
\end{equation*}

\begin{equation*}
    P_2 = \begin{bmatrix}
 \frac{5}{27} & \frac{5}{27} \\
\frac{5}{27} & \frac{5}{27}
\end{bmatrix}, 
R_2 = \begin{bmatrix}
\frac{2}{27} & \frac{3}{54} \\
\frac{3}{54} & \frac{2}{27}
\end{bmatrix},
S_2 = \begin{bmatrix}
0 & -\frac{2}{9} \\
-\frac{2}{9} & 0
\end{bmatrix}.
\end{equation*}
Here $S_2P_1^{-1} = \begin{bmatrix}
 -1 & -2 \\
 -2 & -1
\end{bmatrix}.$
We have $\rho(W_{12}) = 0.8306<1$ but $1 \in \sigma(S_2P_1^{-1})$.
\end{example}

Next result discusses the case when $A$ has two double proper weak splittings. This extends Theorem $2.4$, \cite{li} to rectangular matrices.

\begin{theorem}\label{conv3}
Let $A= P_1-R_1+S_1 = P_2-R_2+S_2$ be two double proper weak splitting of $A\in{\mathbb{R}^{m\times{n}}}$. If $\widehat{A}^{\dag}\widehat{P}\geq 0$, $N(S_2)\supseteq N(P_2)$, $R(S_2)\subseteq R(P_2)$ and $1 \notin \sigma(S_2P_1^{\dag})$, then $\rho(W_{12} )<1$.
\end{theorem}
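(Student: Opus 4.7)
The plan is to leverage the preconditioning machinery that has already been developed in Subsection~\ref{TG:cgs} just before the statement. In that discussion, the authors established three things that we can invoke for free: (i) $\widehat{A} = (I - S_2 P_1^{\dag}) A$ admits the induced decomposition $\widehat{A} = \widehat{P} - \widehat{R} + \widehat{S}$ with $\widehat{P} = P_2$, $\widehat{R} = R_2 - S_2 P_1^{\dag} R_1$, $\widehat{S} = -S_2 P_1^{\dag} S_1$; (ii) under the hypotheses $N(S_2) \supseteq N(P_2)$, $R(S_2) \subseteq R(P_2)$ and $1 \notin \sigma(S_2 P_1^{\dag})$, this decomposition is a double \emph{proper} splitting of $\widehat{A}$; and (iii) the iteration matrix of the corresponding double iterative scheme for the preconditioned system equals $W_{12}$. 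So the proof reduces to verifying the remaining ``weak'' requirements of the induced splitting and then applying Theorem~\ref{2.1.10}.

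Concretely, the first step is to check that $\widehat{A} = \widehat{P} - \widehat{R} + \widehat{S}$ is a double proper \emph{weak} splitting, i.e.\ that $\widehat{P}^{\dag} \widehat{R} \geq 0$ and $\widehat{P}^{\dag} \widehat{S} \leq 0$. Expanding gives
\begin{equation*}
\widehat{P}^{\dag} \widehat{R} = P_2^{\dag} R_2 - (P_2^{\dag} S_2)(P_1^{\dag} R_1), \qquad \widehat{P}^{\dag} \widehat{S} = -(P_2^{\dag} S_2)(P_1^{\dag} S_1).
\end{equation*}
Because both original splittings are double proper weak splittings, we have $P_1^{\dag} R_1 \geq 0$, $P_1^{\dag} S_1 \leq 0$, $P_2^{\dag} R_2 \geq 0$, and $P_2^{\dag} S_2 \leq 0$. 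The sign of each bracketed product then follows from the non-negative cone's closure under multiplication of matrices of the appropriate signs: $(P_2^{\dag} S_2)(P_1^{\dag} R_1) \leq 0$ and $(P_2^{\dag} S_2)(P_1^{\dag} S_1) \geq 0$. Substituting back yields $\widehat{P}^{\dag} \widehat{R} \geq 0$ and $\widehat{P}^{\dag} \widehat{S} \leq 0$, which is precisely the definition of a double proper weak splitting.

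The second step is to invoke Theorem~\ref{2.1.10}. We have just shown that $\widehat{A} = \widehat{P} - \widehat{R} + \widehat{S}$ is a double proper weak splitting, and the hypothesis of the theorem provides $\widehat{A}^{\dag} \widehat{P} \geq 0$. Theorem~\ref{2.1.10} then yields $\rho(\widehat{T}) < 1$, where $\widehat{T}$ is the block iteration matrix associated with the preconditioned double scheme \eqref{preeq1}. Since Subsection~\ref{TG:cgs} has already identified $\widehat{T} = W_{12}$, we conclude $\rho(W_{12}) < 1$.

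There is no real obstacle: the preconditioning identity $\widehat{A} = \widehat{P} - \widehat{R} + \widehat{S}$ and the matching $\widehat{T} = W_{12}$ are supplied by the preceding discussion, the proper-splitting part is already verified there, and the only new contribution is the sign bookkeeping for the induced $\widehat{R}$ and $\widehat{S}$, which is immediate from the weak-splitting hypotheses on the two original splittings. If anything warrants care, it is simply making sure one appeals to the weak conditions $P_i^{\dag} R_i \geq 0$ and $P_i^{\dag} S_i \leq 0$ for both indices $i = 1, 2$ (rather than the stronger regular or weak regular conditions used in Theorem~\ref{conv2}), so that the argument genuinely covers the broader class of double proper weak splittings and thus extends Theorem~$2.4$ of \cite{li}.
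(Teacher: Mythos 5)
Your proposal is correct and follows exactly the route the paper intends: the remark preceding Theorem \ref{conv2} already asserts that the induced splitting $\widehat{A}=\widehat{P}-\widehat{R}+\widehat{S}$ inherits the double proper weak property, and the theorem is then an immediate consequence of Theorem \ref{2.1.10} applied to the preconditioned system, using $\widehat{T}=W_{12}$. Your sign bookkeeping for $\widehat{P}^{\dag}\widehat{R}\geq 0$ and $\widehat{P}^{\dag}\widehat{S}\leq 0$ is the only detail the paper leaves unwritten, and it is carried out correctly.
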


\subsubsection{HT-ADS scheme}\label{HT:cgs}
Let $A = P_1-R_1+S_1 = P_2-R_2+S_2$ be two double proper regular (weak regular) splittings of $A\in{\mathbb{R}^{m\times{n}}}$ such that $N(R_2)\supseteq N(P_2)$, $R(R_2)\subseteq R(P_2)$ and $-1 \notin \sigma(R_2P_1^{\dag})$. 
We then get another preconditioned linear system  
\begin{align}\label{pc2}
    \widehat{\mathcal{A}}x = \widehat{b}_1,
\end{align}
where $\widehat{\mathcal{A}} = (I+R_2P_1^{\dag})A$.
Proceeding similarly as in the convergence analysis discussed in the subsection \ref{TG:cgs}, we thus have $\widehat{\mathcal{A}} = \widehat{\mathcal{P}} - \widehat{\mathcal{ R}}+\widehat{\mathcal{ S}}$ is a  double proper regular (weak regular) splitting of $\widehat{\mathcal{A}}$ where  $\widehat{\mathcal{P}} = P_2$, $\widehat{\mathcal{R}} = R_2P_1^{\dag}R_1-S_2$ and $\widehat{\mathcal{S}} = R_2P_1^{\dag}S_1$. 
   The iteration matrix of the double iteration scheme \eqref{preeq1} with respect to the double proper splitting $\widehat{\mathcal{A}} = \widehat{\mathcal{P}} - \widehat{\mathcal{ R}}+\widehat{\mathcal{ S}}$ is
\begin{equation*}
 {\bf \widehat{\TT}} = \begin{pmatrix}
    \widehat{\mathcal{P}}^{\dag}\widehat{\mathcal{R}} & -\widehat{\mathcal{P}}^{\dag}\widehat{\mathcal{S}}\\ 
    I & 0
    \end{pmatrix}  =  \begin{pmatrix}
    P_2^{\dag}R_2P_1^{\dag}R_1-P_2^{\dag}S_2 & -P_2^{\dag}R_2P_1^{\dag}S_1\\ 
    I & 0
    \end{pmatrix}  = {\bf {\WW}_{12}}.
\end{equation*}
We therefore have the following convergence theorem for the HT-ADS scheme by using Theorem \ref{2.1.1}.

\begin{theorem}\label{wwcon}
If  $A = P_1-R_1+S_1 = P_2-R_2+S_2$ be two double proper regular (weak regular) splittings such that $N(R_2)\supseteq N(P_2)$, $R(R_2)\subseteq R(P_2)$, $-1 \notin \sigma(R_2P_1^{\dag})$ and $\widehat{\mathcal{A}}^{\dag}\geq 0$, then $\rho({\mathcal{W}}_{12})<1.$ \end{theorem}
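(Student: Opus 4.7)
The plan is to mirror the preconditioning strategy of Subsection~\ref{TG:cgs}: I would show that $\widehat{\mathcal{A}} = \widehat{\mathcal{P}}-\widehat{\mathcal{R}}+\widehat{\mathcal{S}}$ is in fact a double proper regular (weak regular) splitting of the preconditioned matrix $\widehat{\mathcal{A}}$, and then invoke Theorem~\ref{2.1.1}. The identity $\widehat{\TT} = {\WW}_{12}$ has already been noted in the block computation just above the theorem, so once Theorem~\ref{2.1.1} delivers $\rho(\widehat{\TT}) < 1$ the spectral radii of the two block matrices match and the conclusion $\rho(\mathcal{W}_{12}) < 1$ follows immediately.

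First I would verify the algebraic identity $\widehat{\mathcal{A}} = P_2 - (R_2P_1^{\dag}R_1 - S_2) + R_2P_1^{\dag}S_1$. Substituting $A = P_1 - R_1 + S_1$ into $\widehat{\mathcal{A}} = (I+R_2P_1^{\dag})A$ and expanding, the critical collapse is $R_2 P_1^{\dag} P_1 = R_2$, which holds because $P_1^{\dag}P_1 = \tilde{P}_{R(P_1^t)}$ annihilates $N(P_1) = N(P_2) \subseteq N(R_2)$ by hypothesis. Next I would check that this is a proper splitting: invertibility of $I+R_2P_1^{\dag}$ (from $-1 \notin \sigma(R_2P_1^{\dag})$) gives $N(\widehat{\mathcal{A}}) = N(A) = N(P_2) = N(\widehat{\mathcal{P}})$, and the inclusion $R(R_2) \subseteq R(P_2) = R(A)$ gives $R_2 = AA^{\dag}R_2$, whence $\widehat{\mathcal{A}} = A(I+A^{\dag}R_2P_1^{\dag}A)$ and therefore $R(\widehat{\mathcal{A}}) \subseteq R(A) = R(\widehat{\mathcal{P}})$; a rank comparison (again from invertibility of the preconditioner) promotes this inclusion to equality.

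Next I would check the sign conditions. In the regular case, $\widehat{\mathcal{P}}^{\dag} = P_2^{\dag} \geq 0$, $\widehat{\mathcal{R}} = R_2P_1^{\dag}R_1 + (-S_2) \geq 0$, and $\widehat{\mathcal{S}} = R_2P_1^{\dag}S_1 \leq 0$, all by direct nonnegativity of the factors coming from the two double regular splittings of $A$. In the weak regular case, I would instead verify $\widehat{\mathcal{P}}^{\dag}\widehat{\mathcal{R}} = (P_2^{\dag}R_2)(P_1^{\dag}R_1) + (-P_2^{\dag}S_2) \geq 0$ and $\widehat{\mathcal{P}}^{\dag}\widehat{\mathcal{S}} = (P_2^{\dag}R_2)(P_1^{\dag}S_1) \leq 0$. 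With the standing assumption $\widehat{\mathcal{A}}^{\dag} \geq 0$, Theorem~\ref{2.1.1} applied to the double proper regular (weak regular) splitting $\widehat{\mathcal{A}} = \widehat{\mathcal{P}}-\widehat{\mathcal{R}}+\widehat{\mathcal{S}}$ yields $\rho(\widehat{\TT}) < 1$, which is the desired conclusion.

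The main obstacle is not any single estimate but the correct bookkeeping of the three side hypotheses: $N(R_2) \supseteq N(P_2)$ powers the algebraic collapse $R_2P_1^{\dag}P_1 = R_2$ in the expansion, $R(R_2) \subseteq R(P_2)$ controls the range of $\widehat{\mathcal{A}}$, and $-1 \notin \sigma(R_2P_1^{\dag})$ gives invertibility of the preconditioner, which is what makes the null-space identity and the rank step in the range identity work simultaneously. Getting these three roles straight is what makes the argument carry over cleanly from the non-singular to the rectangular setting via the Moore--Penrose inverse.
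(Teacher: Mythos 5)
Your proposal is correct and follows essentially the same route as the paper: it shows that the induced splitting $\widehat{\mathcal{A}}=\widehat{\mathcal{P}}-\widehat{\mathcal{R}}+\widehat{\mathcal{S}}$ is a double proper regular (weak regular) splitting by transcribing the argument of Subsection~\ref{TG:cgs} to the preconditioner $I+R_2P_1^{\dag}$, identifies $\widehat{\TT}={\WW}_{12}$, and concludes via Theorem~\ref{2.1.1}. Your bookkeeping of the three side hypotheses (null-space collapse $R_2P_1^{\dag}P_1=R_2$, range containment, invertibility of the preconditioner) correctly supplies the details the paper leaves to the phrase ``proceeding similarly.''
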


Note that the condition $\widehat{\mathcal{A}}^{\dag}\geq 0$ will be replaced by $\widehat{\mathcal{A}}^{\dag}\widehat{\mathcal{P}}\geq 0$ in the case of $A$ having double proper weak splittings. 

\subsection{Comparison Results: TG-ADS scheme}
Convergence theory of ADS schemes will be meaningful if the proposed ADS schemes \eqref{eq5} and \eqref{pp1.1} converge faster than the two individual double iteration schemes of the form \eqref{eq2}. This is discussed first in Theorem \ref{Hybcomp} before moving into other problems. In this context, the following question arises now which is highly useful in practice, i.e., how to choose the second double splitting $A = P_2-R_2+S_2$ if  $A = P_1-R_1+S_1$ is given such that the TG-ADS scheme converges faster than the double iteration scheme arising out of  the splitting  $A = P_1-R_1+S_1$. This is addressed in the next result. 

\begin{theorem}
\label{altcomp121}
Let  $A = P_1-R_1+S_1$ be a double proper  weak regular splitting and $A = P_2-R_2+S_2$ be a double proper  regular  splitting of a  semi-monotone matrix $A\in{\mathbb{R}^{m\times{n}}}$. Suppose that $N(S_2)\supseteq N(P_2)$, $R(S_2)\subseteq R(P_2)$, $1 \notin \sigma(S_2P_1^{\dag})$ and $\widehat{A}^{\dag}\geq 0$. If $P_1^{\dag}R_1\geq P_2^{\dag}R_2$ and $P_2^{\dag}S_2\geq P_1^{\dag}S_1$, then $\rho(W_{12})\leq \rho({T}_1)<1.$ 
\end{theorem}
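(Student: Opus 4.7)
The strategy is to construct a single nonnegative test vector that serves as a Perron eigenvector of $T_1$ at $\lambda:=\rho(T_1)$ and as a super-solution for $W_{12}$ at the same scalar, then deduce $\rho(W_{12})\le\lambda$ by a Perron--Frobenius comparison. First I would record the ingredients: since $A=P_1-R_1+S_1$ is a double proper weak regular splitting of the semi-monotone $A$, Theorem~\ref{2.1.1} already gives $\rho(T_1)<1$. The weak-regular conditions $P_1^{\dag}R_1\ge 0$, $P_1^{\dag}S_1\le 0$ together with the regular-splitting conditions $P_2^{\dag}\ge 0$, $R_2\ge 0$, $S_2\le 0$ render every block of both $T_1$ and $W_{12}$ nonnegative; in particular $P_2^{\dag}S_2 P_1^{\dag}S_1=(-P_2^{\dag}S_2)(-P_1^{\dag}S_1)\ge 0$ and $P_2^{\dag}R_2-P_2^{\dag}S_2P_1^{\dag}R_1\ge 0$.

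By Theorem~\ref{2.1.4} applied to $T_1\ge 0$, I pick a nonnegative eigenvector $w=\binom{u}{v}\ne 0$ with $T_1 w=\lambda w$. The block-row identities are $u=\lambda v$ and $P_1^{\dag}R_1 u-P_1^{\dag}S_1 v=\lambda u$. Substituting the latter into the top block of $W_{12}w$ causes the $-P_2^{\dag}S_2 P_1^{\dag}R_1 u$ and $P_2^{\dag}S_2 P_1^{\dag}S_1 v$ terms to collapse, leaving $(W_{12}w)_{\mathrm{top}}=P_2^{\dag}R_2 u-\lambda P_2^{\dag}S_2 u$; the bottom block is trivially $\lambda v$. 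The comparison hypotheses, rewritten as $P_2^{\dag}R_2\le P_1^{\dag}R_1$ and $-P_2^{\dag}S_2\le -P_1^{\dag}S_1$, then give
\begin{equation*}
(W_{12}w)_{\mathrm{top}}\ \le\ P_1^{\dag}R_1 u-\lambda P_1^{\dag}S_1 u.
\end{equation*}
Assuming $\lambda>0$ and substituting $v=u/\lambda$ in the eigenvector relation to write $P_1^{\dag}R_1 u=\lambda u+P_1^{\dag}S_1 u/\lambda$, the right-hand side becomes $\lambda u+\tfrac{1-\lambda^{2}}{\lambda}P_1^{\dag}S_1 u$, which is bounded by $\lambda u$ because $\lambda<1$ makes the coefficient positive and $P_1^{\dag}S_1 u\le 0$. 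The degenerate case $\lambda=0$ is handled separately and gives the inequality trivially. Hence $W_{12}w\le\lambda w$.

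The final step is the natural companion of Theorem~\ref{2.1.2}: given $W_{12}\ge 0$, $0\ne w\ge 0$, and $W_{12}w\le\lambda w$, I want to conclude $\rho(W_{12})\le\lambda$. I would do this by pairing with a nonnegative left Perron eigenvector $y$ of $W_{12}$, produced by Theorem~\ref{2.1.4} applied to $W_{12}^{t}$, which yields $\rho(W_{12})\,y^{t}w=y^{t}W_{12}w\le\lambda\, y^{t}w$ and thus forces $\rho(W_{12})\le\lambda$ whenever $y^{t}w>0$. I expect the main obstacle to be precisely this non-degeneracy: I would exploit the shared bottom block row $[I\;0]$ of $T_1$ and $W_{12}$, which forces both right and left Perron vectors to have compatibly supported halves and rules out orthogonality; alternatively, the range/kernel conditions $R(S_2)\subseteq R(P_2)$, $N(S_2)\supseteq N(P_2)$ and $1\notin\sigma(S_2 P_1^{\dag})$ that make the preconditioned system $\widehat{A}x=\widehat{b}$ well posed should permit a small perturbation of $w$ to a strictly positive test vector, to which the Collatz--Wielandt bound applies directly. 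Combined with $\rho(T_1)<1$ this yields $\rho(W_{12})\le\rho(T_1)<1$.
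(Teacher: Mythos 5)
Your middle computation is correct and in fact tidier than the paper's: taking a Perron vector $w=\binom{u}{v}$ of $T_1$ at $\lambda=\rho(T_1)$, the cross terms do collapse to give $(W_{12}w)_{\mathrm{top}}=P_2^{\dag}R_2u-\lambda P_2^{\dag}S_2u$, and the hypotheses $P_2^{\dag}R_2\le P_1^{\dag}R_1$ and $-P_2^{\dag}S_2\le -P_1^{\dag}S_1$, together with $P_1^{\dag}S_1u\le 0$ and $0<\lambda<1$, legitimately yield $W_{12}w\le\lambda w$. The gap is the final step. For a general nonnegative matrix, $W_{12}w\le\lambda w$ with merely $w\ge 0$, $w\ne 0$ does \emph{not} imply $\rho(W_{12})\le\lambda$ (take $W=\mathrm{diag}(\tfrac12,2)$, $w=(1,0)^{t}$, $\lambda=\tfrac12$); one needs $w>0$ or irreducibility, neither of which is available here. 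Your two proposed repairs do not close this. The left-eigenvector pairing requires $y^{t}w>0$, and the shared bottom block $[I\ 0]$ does not force the support of a right Perron vector of $T_1$ to meet the support of a left Perron vector of $W_{12}$. The perturbation $w\mapsto w+\epsilon e$ fails precisely in the coordinates where $w$ vanishes: there the needed inequality degenerates into a bound on the corresponding row sum of $W_{12}$ by $\lambda+\delta$, which need not hold for small $\delta$. (Your ``trivial'' case $\lambda=0$ is also not trivial: then $u=0$, the collapse identity gives no sign information on $P_2^{\dag}S_2P_1^{\dag}S_1v\ge 0$, and even granting $W_{12}w\le 0$ you face the same degeneracy in concluding $\rho(W_{12})=0$.)

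The paper avoids this obstruction entirely by running the argument in the opposite direction, which is why the only comparison tool it ever needs is Theorem~\ref{2.1.2}: a super-eigenvector inequality $\alpha x\le Bx$ with merely $x\ge 0$, $x\ne 0$ gives the \emph{lower} bound $\alpha\le\rho(B)$, and that direction is unconditionally valid. Concretely, it takes the Perron vector $\mathbf{x}$ of $W_{12}$ at $\alpha=\rho(W_{12})$, proves $T_1\mathbf{x}\ge\rho(W_{12})\mathbf{x}$, and concludes $\rho(W_{12})\le\rho(T_1)$ by applying Theorem~\ref{2.1.2} to $B=T_1$. The price is that this inequality is harder to establish: the paper first extracts $P_2x_1\ge 0$ and then $Ax_1\ge 0$ from the eigenvalue equation (using $N(S_2)\supseteq N(P_2)$, $R(S_2)\subseteq R(P_2)$, $P_1^{\dag}P_1=P_2^{\dag}P_2$ and $\rho(W_{12})<1$), and only then bounds $T_1\mathbf{x}-\rho(W_{12})\mathbf{x}$ from below using $-P_2^{\dag}S_2\ge 0$. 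To salvage your route you would have to supply the missing strict positivity or irreducibility; the cleaner fix is to swap which matrix supplies the eigenvector, as the paper does.
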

\begin{proof}
By Theorem \ref{2.1.1} and Theorem \ref{conv2}, we have $\rho({T}_1)<1$ and $\rho(W_{12})<1$, respectively.\\
Case (i): $\rho(W_{12}) = 0$.  The proof is obvious.\\ 
Case (ii): $\rho(W_{12}) \neq 0$. Since ${\bf W}_{12}\geq 0$,  there exists a non-negative eigenvector
${\bf x} = \begin{pmatrix}
    x_1 \\
    x_2
  \end{pmatrix} $
such that ${\bf W}_{12}{\bf x} = \rho(W_{12}){\bf x}$ by Theorem \ref{2.1.4}. This gives
\begin{align}
(P_2^{\dag}R_2-P_2^{\dag}S_2P_1^{\dag}R_1)x_1+P_2^{\dag}S_2P_1^{\dag}S_1x_2 = \rho(W_{12})x_1\label{th3.4eq1}\\
x_1  = \rho(W_{12})x_2.\label{th3.4eq2}
\end{align}
We next have
\begin{equation*}\label{th3.4eq3}
 (P_2P_2^{\dag}R_2-P_2P_2^{\dag}S_2P_1^{\dag}R_1)x_1+ \frac{1}{\rho(W_{12})}P_2P_2^{\dag}S_2P_1^{\dag}S_1x_1 = \rho(W_{12})P_2x_1   
\end{equation*}
by pre-multiplying $P_2$ in (\ref{th3.4eq1}). Also, $P_2P_2^{\dag}R_2 = R_2$ as $R(R_2)\subseteq R(P_2)$ which follows from $R(S_2)\subseteq R(P_2)$ and $R(A)= R(P_2)$, and $P_2P_2^{\dag}S_2 = S_2$ as $R(S_2)\subseteq R(P_2)$. Using (\ref{th3.4eq2}), it then 
yields
\begin{equation}\label{eq3.3}
    \rho(W_{12})^{2}P_2x_1 = \rho(W_{12})(R_2-S_2P_1^{\dag}R_1)x_1+S_2P_1^{\dag}S_1x_1,
\end{equation}
which results $P_2x_1\geq 0$.
Also, we have
\begin{align*}
0 &= \rho(W_{12})^{2}P_2x_1 -\rho(W_{12})(R_2-S_2P_1^{\dag}R_1)x_1-S_2P_1^{\dag}S_1x_1 &~&\\
&\leq \rho(W_{12})P_2x_1 -\rho(W_{12})(R_2-S_2P_1^{\dag}R_1)x_1-\rho(W_{12})S_2P_1^{\dag}S_1x_1 &~&\\
&= \rho(W_{12})\left(P_2-R_2+S_2P_1^{\dag}(R_1-S_1)\right)x_1 &~&\\
&= \rho(W_{12})\left(A-S_2+S_2P_1^{\dag}(P_1-A)\right)x_1 &~&\\
&= \rho(W_{12})\left(A-S_2+S_2P_2^{\dag}P_2-S_2P_1^{\dag}A\right)x_1 ~~\text{ by } P_1^{\dag}P_1 = P_2^{\dag}P_2  ~\mbox{as}~R(P_1)=R(A)=R(P_2)\\
&= \rho(W_{12})\left(A-S_2P_1^{\dag}A\right)x_1 ~~~~~~~~~~~~~~~~~~~~~~\, (\because S_2P_2^{\dag}P_2 = S_2 ~\mbox{as} ~N(S_2)\supseteq N(P_2))\\
&= \rho(W_{12})\left(I+(-S_2P_1^{\dag})\right)Ax_1.
\end{align*}
Thus, $Ax_1\geq 0$.
Now 
\begin{eqnarray*}
{\bf T}_1{\bf x}-\rho(W_{12}){\bf x} &=& \begin{pmatrix}
    P_1^{\dag}R_1x_1-P_1^{\dag}S_1x_2-\rho(W_{12})x_1\\ 
    x_1-\rho(W_{12})x_2
    \end{pmatrix}\\  
&=& \begin{pmatrix}
   \Delta\\ 
    0
    \end{pmatrix},
\end{eqnarray*}
where
\begin{eqnarray*}
\Delta
&=& (P_1^{\dag}R_1-P_2^{\dag}R_2)x_1+P_2^{\dag}S_2P_1^{\dag}R_1x_1-\frac{1}{\rho(W_{12})}P_1^{\dag}S_1x_1-\frac{1}{\rho(W_{12})}P_2^{\dag}S_2P_1^{\dag}S_1x_1\\
&\geq& P_2^{\dag}S_2P_1^{\dag}R_1x_1-\frac{1}{\rho(W_{12})}P_1^{\dag}S_1x_1-\frac{1}{\rho(W_{12})}P_2^{\dag}S_2P_1^{\dag}S_1x_1\\
&\geq& \frac{1}{\rho(W_{12})}\left(P_2^{\dag}S_2P_1^{\dag}(R_1-S_1)\right)x_1-\frac{1}{\rho(W_{12})}P_1^{\dag}S_1x_1\\
&=& \frac{1}{\rho(W_{12})}\left(P_2^{\dag}S_2P_1^{\dag}(P_1-A)-P_1^{\dag}S_1\right)x_1\\
&=& \frac{1}{\rho(W_{12})}(P_2^{\dag}S_2-P_1^{\dag}S_1)x_1+\frac{1}{\rho(W_{12})}(-P_2^{\dag}S_2)P_1^{\dag}Ax_1\geq 0.
\end{eqnarray*}

 Therefore, ${\bf T}_1{\bf x}-\rho(W_{12}){\bf x} \geq 0$. By Theorem \ref{2.1.2}, we thus have $\rho(W_{12})\leq \rho({T}_1)<1.$
\end{proof}

Next result shows that the ADS scheme performs  better than the other double iteration scheme formed by $A = P_2-R_2+S_2$. This extends Theorem $3.5$, \cite{li} to rectangular matrix case and  can be proved proceeding similarly as in the non-singular case.

\begin{theorem}\label{altcomp}
Let  $A = P_1-R_1+S_1$ be a double proper weak regular  splitting and $A = P_2-R_2+S_2$ be a double proper regular splitting of a semi-monotone matrix $A\in{\mathbb{R}^{m\times{n}}}$. If $N(S_2)\supseteq N(P_2)$, $R(S_2)\subseteq R(P_2)$, $1 \notin \sigma(S_2P_1^{\dag})$ and $\widehat{A}^{\dag}\geq 0$, then $\rho(W_{12})\leq \rho({T}_2)<1.$ 
\end{theorem}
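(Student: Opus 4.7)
The plan is to follow the blueprint of Theorem \ref{altcomp121} but aim the final Perron-type inequality at ${\bf T}_2$ instead of ${\bf T}_1$, so that the stated hypotheses (which no longer include the monotone comparisons $P_1^{\dag}R_1\geq P_2^{\dag}R_2$ and $P_2^{\dag}S_2\geq P_1^{\dag}S_1$) still suffice. From Theorem \ref{2.1.1} applied to the double proper regular splitting $A=P_2-R_2+S_2$ of the semi-monotone matrix $A$, I would first record $\rho(T_2)<1$, and from Theorem \ref{conv2} that $\rho(W_{12})<1$. The case $\rho(W_{12})=0$ is immediate, so I would assume $\rho(W_{12})>0$. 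Under the two splitting hypotheses the block matrix ${\bf W}_{12}$ is entry-wise non-negative, so Theorem \ref{2.1.4} supplies a non-negative eigenvector ${\bf x}=(x_1,x_2)^{t}$ with ${\bf W}_{12}{\bf x}=\rho(W_{12}){\bf x}$; matching block rows yields $x_1=\rho(W_{12})x_2$. Pre-multiplying the top block equation by $P_2$ and using $P_2P_2^{\dag}R_2=R_2$ and $P_2P_2^{\dag}S_2=S_2$ (which follow from $R(S_2)\subseteq R(P_2)=R(A)$) reproduces identity \eqref{eq3.3}, whose right-hand side is $\geq 0$, so $P_2x_1\geq 0$. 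The chain of inequalities in the proof of Theorem \ref{altcomp121} is then reused verbatim to obtain $Ax_1\geq 0$, and hence $P_1^{\dag}Ax_1\geq 0$ since $P_1^{\dag}\geq 0$ from the weak regularity of the first splitting.

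The crux is to show that the first block component
\[
\Delta = P_2^{\dag}R_2 x_1 - P_2^{\dag}S_2 x_2 - \rho(W_{12})x_1
\]
of ${\bf T}_2{\bf x}-\rho(W_{12}){\bf x}$ is non-negative (the second is $0$ by $x_1=\rho(W_{12})x_2$). Eliminating $\rho(W_{12})x_1$ via the top eigenvalue relation and then substituting $x_2=x_1/\rho(W_{12})$ would bring $\Delta$ to the form
\[
\Delta = -\frac{1}{\rho(W_{12})}\,P_2^{\dag}S_2\bigl(I+P_1^{\dag}S_1-\rho(W_{12})P_1^{\dag}R_1\bigr)x_1.
\]
The decisive algebraic identity, obtained by writing $R_1-S_1=P_1-A$, is
\[
\bigl(I+P_1^{\dag}S_1-\rho(W_{12})P_1^{\dag}R_1\bigr)x_1 = (I-P_1^{\dag}P_1)x_1 + P_1^{\dag}Ax_1 + \bigl(1-\rho(W_{12})\bigr)P_1^{\dag}R_1 x_1.
\]
The first summand is killed by the outer factor: $P_1^{\dag}P_1=P_2^{\dag}P_2$ (since $R(P_1)=R(A)=R(P_2)$ forces $R(P_1^{t})=R(P_2^{t})$) and $S_2P_2^{\dag}P_2=S_2$ (since $N(S_2)\supseteq N(P_2)$), so $P_2^{\dag}S_2(I-P_1^{\dag}P_1)=P_2^{\dag}S_2-P_2^{\dag}S_2=0$. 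The remaining two summands produce non-negative contributions: $-P_2^{\dag}S_2\geq 0$, $P_1^{\dag}\geq 0$, $Ax_1\geq 0$, $P_1^{\dag}R_1\geq 0$, $x_1\geq 0$, and $1-\rho(W_{12})>0$. Hence $\Delta\geq 0$, so ${\bf T}_2{\bf x}\geq \rho(W_{12}){\bf x}$ with ${\bf x}\geq 0$, ${\bf x}\neq 0$, and Theorem \ref{2.1.2} will yield $\rho(W_{12})\leq \rho(T_2)<1$.

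The main obstacle relative to the non-singular prototype in \cite{li} is the residual projection $I-P_1^{\dag}P_1$, which does not vanish in the rectangular setting. Its cancellation is exactly what forces both $R(P_1)=R(A)=R(P_2)$ and $N(S_2)\supseteq N(P_2)$ into essential use; checking that neither hypothesis can be weakened here is the subtle point of the extension.
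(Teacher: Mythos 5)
Your proof is correct and follows exactly the Perron--eigenvector comparison template the paper uses for Theorems \ref{altcomp121} and \ref{wt2comp2} (the paper itself omits the proof of Theorem \ref{altcomp}, deferring to the non-singular case): the reduction of $\Delta$ to $-\rho(W_{12})^{-1}P_2^{\dag}S_2\bigl(I+P_1^{\dag}S_1-\rho(W_{12})P_1^{\dag}R_1\bigr)x_1$, the cancellation of $I-P_1^{\dag}P_1$ via $P_1^{\dag}P_1=P_2^{\dag}P_2$ and $S_2P_2^{\dag}P_2=S_2$, and the sign analysis of the two surviving terms are all sound. The only step you inherit rather than prove is $Ax_1\geq 0$: the paper's chain in Theorem \ref{altcomp121} actually establishes $(I-S_2P_1^{\dag})Ax_1\geq 0$ and then asserts $Ax_1\geq 0$ without explaining why $(I-S_2P_1^{\dag})^{-1}$ preserves non-negativity on $R(A)$, so your ``verbatim reuse'' carries the same gap (and, as a minor point, $P_1^{\dag}P_1=P_2^{\dag}P_2$ follows from $N(P_1)=N(A)=N(P_2)$ rather than from the equality of ranges).
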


The importance of the TG-ADS scheme is discussed in the next result which is a combination of Theorem  \ref{altcomp121} and  Theorem \ref{altcomp}.
The result says that the proposed ADS scheme \eqref{eq5}
converges faster than usual double iteration scheme  \eqref{eq3} under suitable assumptions. 

\begin{theorem}\label{Hybcomp}
Let  $A = P_1-R_1+S_1$ be a double proper  weak regular  splitting
and $A = P_2-R_2+S_2$ be a double proper  regular splitting of a  semi-monotone matrix $A\in{\mathbb{R}^{m\times{n}}}$. If $N(S_2)\supseteq N(P_2)$, $R(S_2)\subseteq R(P_2)$, $1 \notin \sigma(S_2P_1^{\dag})$,  $\widehat{A}^{\dag}\geq 0$,   $P_1^{\dag}R_1\geq P_2^{\dag}R_2$ and $P_2^{\dag}S_2\geq P_1^{\dag}S_1$, then $$\rho(W_{12})\leq \mbox{min}
\{\rho({T}_1), \rho({T}_2)\}<1.$$ 
\end{theorem}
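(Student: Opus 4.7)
The plan is to derive this result as an immediate consequence of Theorem \ref{altcomp121} and Theorem \ref{altcomp}, since both apply under the hypotheses of the present theorem. The argument is therefore a two-step appeal rather than a genuine new computation; I would record it as a short corollary-style proof.

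First I would verify that the hypotheses assembled in Theorem \ref{Hybcomp} include every hypothesis demanded by Theorem \ref{altcomp121}: the weak regular character of $A = P_1-R_1+S_1$ and the regular character of $A = P_2-R_2+S_2$, semi-monotonicity of $A$, the range/null-space inclusions $R(S_2)\subseteq R(P_2)$ and $N(S_2)\supseteq N(P_2)$, the spectral condition $1\notin\sigma(S_2P_1^{\dag})$, non-negativity of $\widehat{A}^{\dag}$, and the comparison inequalities $P_1^{\dag}R_1\geq P_2^{\dag}R_2$ and $P_2^{\dag}S_2\geq P_1^{\dag}S_1$. Invoking Theorem \ref{altcomp121} then yields the first half of the conclusion, namely $\rho(W_{12})\leq \rho(T_1)<1$.

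Second, I would invoke Theorem \ref{altcomp}, which requires a strictly smaller list of hypotheses (the two comparison inequalities are not needed there). Since all its assumptions are satisfied \emph{a fortiori} under the hypotheses of Theorem \ref{Hybcomp}, one obtains $\rho(W_{12})\leq \rho(T_2)<1$. Combining the two bounds gives
\[
\rho(W_{12}) \leq \min\{\rho(T_1),\rho(T_2)\}<1,
\]
which is exactly the claim.

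No genuine obstacle arises, since the substantive spectral-radius estimate has already been carried out in the proof of Theorem \ref{altcomp121} (via the Perron-type tools, Theorems \ref{2.1.2} and \ref{2.1.4}), and the preconditioned-splitting construction in Section \ref{TG:cgs} has already secured $\rho(W_{12})<1$ through Theorem \ref{conv2}. The only verification worth making explicit is that both underlying comparison theorems speak about the same iteration matrix $W_{12}$ of the TG-ADS scheme, so the two inequalities can be combined without any further compatibility check.
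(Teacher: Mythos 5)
Your proposal is correct and matches the paper exactly: the paper presents Theorem \ref{Hybcomp} as "a combination of Theorem \ref{altcomp121} and Theorem \ref{altcomp}" with no further argument, which is precisely your two-step appeal. The hypothesis check (that Theorem \ref{altcomp} needs only a subset of the assumptions, while Theorem \ref{altcomp121} needs them all) is the right and only verification required.
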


The corollary obtained below is even new in the non-singular matrix setting. 

\begin{corollary}\label{Hybcompcor}
Let  $A = P_1-R_1+S_1$ be a double  weak regular  splitting
and $A = P_2-R_2+S_2$ be a double regular splitting of a  monotone matrix $A\in{\mathbb{R}^{n\times {n}}}$. If $1 \notin \sigma(S_2P_1^{-1})$,  $\widehat{A}^{-1}\geq 0$,   $P_1^{-1}R_1\geq P_2^{-1}R_2$ and $P_2^{-1}S_2\geq P_1^{-1}S_1$, then $$\rho(W_{12})\leq \mbox{min}
\{\rho({T}_1), \rho({T}_2)\}<1.$$ 
\end{corollary}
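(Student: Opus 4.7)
The plan is to derive Corollary \ref{Hybcompcor} as a direct specialization of Theorem \ref{Hybcomp} to the non-singular matrix case, checking only that the hypotheses of the general theorem are either preserved or become automatic when $A \in \mathbb{R}^{n \times n}$ is monotone. First I would note that $A$ monotone means $A^{-1}$ exists with $A^{-1} \geq 0$, so $A$ is semi-monotone (indeed $A^\dag = A^{-1}$) and the corollary's matrix setting is a sub-case of the theorem's.

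Next I would translate the splitting hypotheses. If $A$ is non-singular and $A = P_i - R_i + S_i$ is a splitting with $P_i$ non-singular (forced by the usual convention in the non-singular theory), then the proper splitting conditions $R(P_i) = R(A) = \mathbb{R}^n$ and $N(P_i) = N(A) = \{0\}$ hold automatically, so a double weak regular (resp. double regular) splitting in the sense of \cite{shn} is in particular a double proper weak regular (resp. double proper regular) splitting in the sense of this paper, with $P_i^\dag = P_i^{-1}$. The extra range and null space conditions $R(S_2) \subseteq R(P_2) = \mathbb{R}^n$ and $N(S_2) \supseteq N(P_2) = \{0\}$ then hold vacuously, and the remaining hypotheses $1 \notin \sigma(S_2 P_1^{-1})$, $\widehat{A}^{-1} \geq 0$, $P_1^{-1}R_1 \geq P_2^{-1}R_2$, and $P_2^{-1}S_2 \geq P_1^{-1}S_1$ match those of Theorem \ref{Hybcomp} once one replaces the Moore-Penrose inverse by the ordinary inverse (and observes that $\widehat{A} = (I - S_2 P_1^{-1})A$ is a product of two non-singular matrices, so $\widehat{A}^\dag = \widehat{A}^{-1}$).

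With this dictionary in place, I would simply invoke Theorem \ref{Hybcomp} to conclude $\rho(W_{12}) \leq \min\{\rho(T_1), \rho(T_2)\} < 1$. There is no genuine obstacle here; the only item that deserves an explicit sentence is that the "proper" qualifier in the hypotheses of Theorem \ref{Hybcomp} collapses trivially in the non-singular regime, which is exactly why the corollary is noted to be new even in the non-singular setting despite being formally a specialization.
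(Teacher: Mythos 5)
Your proposal is correct and is essentially the same argument the paper intends: the corollary is stated with no separate proof precisely because it is the specialization of Theorem \ref{Hybcomp} to the non-singular case, where the proper-splitting and range/null-space hypotheses become vacuous and the Moore--Penrose inverse reduces to the ordinary inverse. Your careful accounting of why each hypothesis of the theorem holds or collapses is exactly the right check.
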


The next result provides different sufficient conditions to draw the conclusion of Theorem \ref{altcomp121}.

\begin{theorem}\label{altcomp122}
Let $A=P_{1}-R_{1}+S_{1}$ be a double proper  weak regular splitting and $A=P_{2}-R_{2}-S_{2}$ be a double proper regular splitting of a semi-monotone matrix $A\in \mathbb{R}^{m\times n}$. Suppose that $N(S_{2})\supseteq N(P_{2})$, $R(S_{2})\subseteq R(P_{2})$, $1 \notin \sigma(S_2P_1^{\dag})$ and $\widehat{A}^{\dagger}\geq 0$. If $P_{1}^{\dagger}\leq P_{2}^{\dagger}$ and $P_{1}^{\dagger}R_{1}\leq P_{2}^{\dagger}R_{2}$, then $\rho(W_{12})\leq \rho(T_{1})<1$.
\end{theorem}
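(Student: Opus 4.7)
The plan is to adapt the Perron--Frobenius argument used in the proof of Theorem~\ref{altcomp121} to the present hypotheses. Theorem~\ref{2.1.1} gives $\rho(T_1)<1$ and Theorem~\ref{conv2} gives $\rho(W_{12})<1$. If $\rho(W_{12})=0$ there is nothing to prove; otherwise, because ${\bf W}_{12}\geq 0$, Theorem~\ref{2.1.4} yields a non-negative eigenvector ${\bf x}=(x_1,x_2)^t$ of ${\bf W}_{12}$ associated with $\rho:=\rho(W_{12})>0$. The second block of the eigenvector equation gives $x_1=\rho x_2$, and since $x_1\in R(A^t)$ we have $P_1^{\dag}P_1 x_1=P_2^{\dag}P_2 x_1=x_1$. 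Premultiplying the first block of the equation by $P_2$ produces the identity $\rho^2 P_2 x_1=\rho(R_2-S_2 P_1^{\dag} R_1)x_1+S_2 P_1^{\dag} S_1 x_1$. Following the chain of steps in the proof of Theorem~\ref{altcomp121}, one deduces first $P_2 x_1\geq 0$ and then $Ax_1\geq 0$; the only inputs needed for this are the standing assumptions $N(S_2)\supseteq N(P_2)$, $R(S_2)\subseteq R(P_2)$, $1\notin\sigma(S_2 P_1^{\dag})$, and $\widehat{A}^{\dag}\geq 0$.

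The remaining goal is to verify that the first component $\Delta$ of ${\bf T}_1{\bf x}-\rho{\bf x}$ is non-negative (the second component vanishes because $x_1=\rho x_2$), after which Theorem~\ref{2.1.2} applied to ${\bf T}_1$ delivers $\rho(W_{12})\leq\rho(T_1)<1$. Substituting the eigenvector equation into the definition of $\Delta$ reproduces the expression
\[
\Delta=(P_1^{\dag}R_1-P_2^{\dag}R_2)x_1+P_2^{\dag}S_2 P_1^{\dag}R_1 x_1-\tfrac{1}{\rho}P_1^{\dag}S_1 x_1-\tfrac{1}{\rho}P_2^{\dag}S_2 P_1^{\dag}S_1 x_1
\]
that appears in the proof of Theorem~\ref{altcomp121}. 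Since $P_2^{\dag}S_2 P_1^{\dag}R_1 x_1\leq 0$ and $1/\rho>1$, the factor $1/\rho$ can be absorbed into the second summand; then $R_1-S_1=P_1-A$ combined with $S_2 P_1^{\dag}P_1=S_2$ (which follows from $N(S_2)\supseteq N(A)$ and $R(P_1^t)=R(A^t)=R(P_2^t)$) yields
\[
\Delta\geq (P_1^{\dag}R_1-P_2^{\dag}R_2)x_1+\tfrac{1}{\rho}(P_2^{\dag}S_2-P_1^{\dag}S_1)x_1+\tfrac{1}{\rho}(-P_2^{\dag}S_2)P_1^{\dag}Ax_1.
\]
The third summand is non-negative because $-P_2^{\dag}S_2\geq 0$, $P_1^{\dag}\geq 0$, and $Ax_1\geq 0$.

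The main obstacle, and the point of departure from Theorem~\ref{altcomp121}, is that the first summand $(P_1^{\dag}R_1-P_2^{\dag}R_2)x_1$ is now non-positive and hence cannot be discarded as was done there. I would resolve this by combining it with the middle summand. From $P_1^{\dag}\leq P_2^{\dag}$ and $Ax_1\geq 0$ it follows that $(P_2^{\dag}-P_1^{\dag})Ax_1\geq 0$; expanding $P_i^{\dag}A=P_i^{\dag}P_i-P_i^{\dag}R_i+P_i^{\dag}S_i$ and using $P_1^{\dag}P_1 x_1=P_2^{\dag}P_2 x_1=x_1$ gives the vector identity
\[
(P_2^{\dag}S_2-P_1^{\dag}S_1)x_1-(P_2^{\dag}R_2-P_1^{\dag}R_1)x_1=(P_2^{\dag}-P_1^{\dag})Ax_1\geq 0.
\]
Setting $u=(P_2^{\dag}R_2-P_1^{\dag}R_1)x_1$ and $v=(P_2^{\dag}S_2-P_1^{\dag}S_1)x_1$, the hypothesis $P_1^{\dag}R_1\leq P_2^{\dag}R_2$ gives $u\geq 0$, while the displayed identity gives $v\geq u$. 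Since $1/\rho\geq 1$,
\[
(P_1^{\dag}R_1-P_2^{\dag}R_2)x_1+\tfrac{1}{\rho}(P_2^{\dag}S_2-P_1^{\dag}S_1)x_1=-u+\tfrac{1}{\rho}v\geq -u+v\geq 0.
\]
Combined with the non-negativity of the third summand, this shows $\Delta\geq 0$ and completes the argument.
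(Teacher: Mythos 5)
Your proposal is correct and follows essentially the same route as the paper: a Perron eigenvector of ${\bf W}_{12}$, the reduction to showing ${\bf T}_1{\bf x}-\rho(W_{12}){\bf x}\geq 0$, the borrowed facts $P_2x_1\geq 0$ and $Ax_1\geq 0$ from Theorem~\ref{altcomp121}, and the same two key non-negative quantities $(P_2^{\dag}-P_1^{\dag})Ax_1$ and $(-P_2^{\dag}S_2)P_1^{\dag}Ax_1$. The only difference is bookkeeping: the paper packages the hypothesis $P_1^{\dag}R_1\leq P_2^{\dag}R_2$ as $P_1^{\dag}R_1-\widehat{P}^{\dag}\widehat{R}\leq 0$ and uses a $\bigl(1-\tfrac{1}{\rho}\bigr)$ factor, whereas you absorb $\tfrac{1}{\rho}$ into the non-positive term $P_2^{\dag}S_2P_1^{\dag}R_1x_1$ and argue via $-u+\tfrac{1}{\rho}v\geq v-u\geq 0$; both are the same estimate.
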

\begin{proof}
By Theorem \ref{2.1.1} and Theorem \ref{conv2}, we have $\rho({T}_1)<1$ and $\rho(W_{12})<1$, respectively.\\
Case (i): $\rho(W_{12}) = 0$.  The proof is obvious.\\ 
Case (ii): $\rho(W_{12}) \neq 0$. Since ${\bf W}_{12}\geq 0$,  there exists a non-negative eigenvector
${\bf x} = \begin{pmatrix}
    x_1 \\
    x_2
  \end{pmatrix} $
such that ${\bf W}_{12}{\bf x} = \rho(W_{12}){\bf x}$, i.e.,  $\widehat{{\bf T}}{\bf x} = \rho(W_{12}){\bf x}$ by Theorem \ref{2.1.4}. This gives
\begin{eqnarray*}
\widehat{P}^{\dag}\widehat{R}x_1-\widehat{P}^{\dag}\widehat{S}x_2 &=& \rho(W_{12})x_1\\
x_1 &=& \rho(W_{12})x_2.
\end{eqnarray*}
Now 
\begin{eqnarray*}
{\bf T}_1{\bf x}-\rho(W_{12}){\bf x} &=& \begin{pmatrix}
    P_1^{\dag}R_1x_1-P_1^{\dag}S_1x_2-\rho(W_{12})x_1\\ 
    x_1-\rho(W_{12})x_2
    \end{pmatrix}.  
\end{eqnarray*}
The condition $P_{1}^{\dagger}R_{1}\leq P_{2}^{\dagger}R_{2}$ yields
$$P_{1}^{\dagger}R_{1}-P_{2}^{\dagger}R_{2}+P_{2}^{\dagger}S_{2}P_{1}^{\dagger}R_{1}\leq 0~~ \mbox{i.e.,}~~P_{1}^{\dagger}R_{1}-P_{2}^{\dagger}(R_{2}-S_{2}P_{1}^{\dagger}R_{1})\leq 0.$$
Hence $P_{1}^{\dagger}R_{1}-\widehat{P}^{\dagger}\widehat{R}\leq0.$
Now
\begin{eqnarray*}
    && P_{1}^{\dagger}R_{1}x_{1}-P_{1}^{\dagger}S_{1} x_{2}-\rho(W_{12})x_{1}-\frac{1}{\rho(W_{12})}(P_{1}^{\dagger}R_{1}-P_{1}^{\dagger}S_{1})x_{1}-\frac{1}{\rho(W_{12})}(\widehat{P}^{\dagger}\widehat{S}-\widehat{P}^{\dagger}\widehat{R})x_{1}\\
    &=& P_{1}^{\dagger}R_{1}x_{1}-\frac{1}{\rho(W_{12})}P_{1}^{\dagger}S_{1}x_{1}-\widehat{P}^{\dagger}\widehat{R}x_{1}+\frac{1}{\rho(W_{12})}\widehat{P}^{\dagger}\widehat{S}x_{1}\\
    &~&~~~~~~~~~~-\frac{1}{\rho(W_{12})}P_{1}^{\dagger}R_{1}x_{1}+\frac{1}{\rho(W_{12})}P_{1}^{\dagger}S_{1}x_{1}-\frac{1}{\rho(W_{12})}\widehat{P}^{\dagger}\widehat{S}x_{1}+\frac{1}{\rho(W_{12})}\widehat{P}^{\dagger}\widehat{R}x_{1}\\
    &=&\left(1-\frac{1}{\rho(W_{12})}\right)P_{1}^{\dagger}R_{1}x_{1}+\left(\frac{1}{\rho(W_{12})}-1\right)\widehat{P}^{\dagger}\widehat{R}x_{1}\\
    &=&\left(1-\frac{1}{\rho(W_{12})}\right)(P_{1}^{\dagger}R_{1}-\widehat{P}^{\dagger}\widehat{R})x_{1}\geq 0.
\end{eqnarray*}
Therefore, \begin{eqnarray*}
    P_{1}^{\dagger}R_{1}x_{1}-P_{1}^{\dagger}S_{1} x_{2}-\rho(W_{12})x_{1} &\geq& \frac{1}{\rho(W_{12})}(P_{1}^{\dagger}R_{1}-P_{1}^{\dagger}S_{1}+\widehat{P}^{\dagger}\widehat{S}-\widehat{P}^{\dagger}\widehat{R})x_{1}\\
    &=&\frac{1}{\rho(W_{12})}\left(P_{1}^{\dagger}(R_{1}-S_{1})+\widehat{P}^{\dagger}(\widehat{S}-\widehat{R})\right)x_{1}\\
    &=&\frac{1}{\rho(W_{12})}\left(P_{1}^{\dagger}(P_{1}-A)+\widehat{P}^{\dagger}(\widehat{A}-\widehat{P})\right)x_{1}\\
        &=&\frac{1}{\rho(W_{12})}(P_{2}^{\dagger}P_{2}+\widehat{P}^{\dagger}\widehat{A}-P_{1}^{\dagger}A-P_{2}^{\dagger}P_{2})x_{1}\\
    &=&\frac{1}{\rho(W_{12})}(P_{2}^{\dagger}A-P_{2}^{\dagger}S_{2}P_{1}^{\dagger}A-P_{1}^{\dagger}A)x_{1}\\
    &=&\frac{1}{\rho(W_{12})}(P_{2}^{\dagger}-P_{1}^{\dagger})Ax_{1}+\frac{1}{\rho(W_{12})}(-P_{2}^{\dagger}S_{2}P_{1}^{\dagger})Ax_1\geq0,
\end{eqnarray*}
as $Ax_1\geq 0$ can be shown as in the previous proof. We thus have ${\bf T}_1{\bf x}-\rho(W_{12}){\bf x}\geq 0$  resulting  $   \rho(W_{12})\leq \rho(T_{1})<1$
 by Theorem \ref{2.1.2}.
\end{proof}

\begin{corollary}
Let $A=P_{1}-R_{1}+S_{1}$ be a double  weak regular splitting and $A=P_{2}-R_{2}-S_{2}$ be a double  regular splitting of a monotone matrix $A\in \mathbb{R}^{n\times n}$. Suppose that $1 \notin \sigma(S_2P_1^{-1})$ and $\widehat{A}^{-1}\geq 0$. If $P_{1}^{-1}\leq P_{2}^{-1}$ and $P_{1}^{-1}R_{1}\leq P_{2}^{-1}R_{2}$, then $$\rho(W_{12})\leq \rho(T_{1})<1.$$
\end{corollary}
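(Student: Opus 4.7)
The plan is to obtain this corollary as a direct specialization of Theorem \ref{altcomp122} to the non-singular case, so essentially no new computation is required; the work is entirely in verifying that the hypotheses of Theorem \ref{altcomp122} are met when $m=n$ and $A$ is monotone.

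First I would observe that monotonicity of $A\in\mathbb{R}^{n\times n}$ means $A^{-1}$ exists and $A^{-1}\geq 0$, which coincides with $A^{\dagger}=A^{-1}\geq 0$ in the square case, so $A$ is semi-monotone in the sense used throughout Section~3. Next, because $P_{1}$ and $P_{2}$ are non-singular, we automatically have $R(P_{i})=\mathbb{R}^{n}=R(A)$ and $N(P_{i})=\{0\}=N(A)$, so the splittings $A=P_{1}-R_{1}+S_{1}$ and $A=P_{2}-R_{2}+S_{2}$ are in fact double proper weak regular and double proper regular splittings, respectively (and $P_{i}^{\dagger}$ reduces to $P_{i}^{-1}$). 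This immediately upgrades the hypothesis ``$P_{1}^{-1}\leq P_{2}^{-1}$, $P_{1}^{-1}R_{1}\leq P_{2}^{-1}R_{2}$'' to the Moore--Penrose form required in Theorem \ref{altcomp122}.

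Then I would check the structural hypotheses on $S_{2}$. Since $P_{2}$ is invertible, $N(P_{2})=\{0\}\subseteq N(S_{2})$ and $R(S_{2})\subseteq \mathbb{R}^{n}=R(P_{2})$ hold trivially, so the conditions $N(S_{2})\supseteq N(P_{2})$ and $R(S_{2})\subseteq R(P_{2})$ are free. The remaining two hypotheses, $1\notin\sigma(S_{2}P_{1}^{-1})$ and $\widehat{A}^{-1}\geq 0$, are part of the assumption of the corollary (and $\widehat{A}^{-1}\geq 0$ is exactly $\widehat{A}^{\dagger}\geq 0$ in the non-singular setting, since $\widehat{A}=(I-S_{2}P_{1}^{-1})A$ is a product of two invertible matrices and hence invertible).

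With every hypothesis of Theorem \ref{altcomp122} verified, I would invoke that theorem to conclude $\rho(W_{12})\leq \rho(T_{1})<1$, which is exactly the assertion of the corollary. I do not expect any genuine obstacle here; the only point worth a sentence of care is the verification that $\widehat{A}$ is invertible (so that $\widehat{A}^{-1}\geq 0$ makes sense and agrees with $\widehat{A}^{\dagger}$), which follows from the invertibility of $I-S_{2}P_{1}^{-1}$ (guaranteed by $1\notin\sigma(S_{2}P_{1}^{-1})$) together with the invertibility of $A$.
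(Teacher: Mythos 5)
Your proposal is correct and matches the paper exactly: the corollary is stated there as an immediate specialization of Theorem \ref{altcomp122}, with no separate proof given, and your verification that the range/null-space hypotheses become vacuous and that $\dagger$ reduces to $-1$ in the non-singular monotone case is precisely the intended justification.
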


Note that the conclusion of Theorem \ref{Hybcomp} also follows if the conditions $P_1^{\dag}R_1\geq P_2^{\dag}R_2$ and $P_2^{\dag}S_2\geq P_1^{\dag}S_1$ are replaced by $P_{1}^{\dagger}\leq P_{2}^{\dagger}$ and $P_{1}^{\dagger}R_{1}\leq P_{2}^{\dagger}R_{2}$.
Based on the above-discussed results, it is confirmed that the TG-ADS scheme is a better choice for a certain class of matrices. However, if a matrix $A$ has many pairs of double proper splittings satisfying the desired convergence criteria for the TG-ADS scheme, we face another problem, i.e.,  if $A$ has three or more double proper splittings of $A\in{\mathbb{R}^{m\times{n}}}$, the problem is to choose which pair of double proper splittings to frame the TG hydrid scheme. And to do this, we present a few comparison results next.  

Let $A = P_1-R_1+S_1 = P_2-R_2+S_2 = P_3-R_3+S_3$ be three double proper splittings of $A\in{\mathbb{R}^{m\times{n}}}$. 
Then, the iteration matrices for framing ADS schemes as in \eqref{eqq1} and \eqref{eqq2} are   
\begin{equation*}
   {\bf G}_1  = \begin{pmatrix}
    I & 0\\
    P_1^{\dag}R_1 & -P_1^{\dag}S_1 
    \end{pmatrix},~ {\bf G}_2 = \begin{pmatrix}
    I & 0\\
    P_2^{\dag}R_2 & -P_2^{\dag}S_2 
    \end{pmatrix}
   \end{equation*}
and 
 \begin{equation*}
     {\bf T}_3  = \begin{pmatrix}
    P_3^{\dag}R_3 & -P_3^{\dag}S_3\\ 
    I & 0
    \end{pmatrix}.
 \end{equation*}
 
If $N(S_3)\supseteq N(P_3)$, $R(S_3)\subseteq R(P_3)$ and $1 \notin \sigma(S_3P_1^{\dag})$, then $A = P_1-R_1+S_1 = P_3-R_3+S_3$ induce a  double proper splitting $\widehat{A}_1 = \widehat{P}_1-\widehat{R}_1+\widehat{S}_1$  to solve the preconditioned linear system $\widehat{A}_1x = \widehat{b}_1$.  The iteration matrix corresponding the double iterative scheme \eqref{preeq1} is 
\begin{equation*}
{\bf W}_{13} =  {\bf T}_3{\bf G}_1 =  \begin{pmatrix}
    P_3^{\dag}R_3-P_3^{\dag}S_3P_1^{\dag}R_1 & P_3^{\dag}S_3P_1^{\dag}S_1\\ 
    I & 0
    \end{pmatrix}. 
\end{equation*}
Similarly, assuming $N(S_3)\supseteq N(P_3)$, $R(S_3)\subseteq R(P_3)$ and $1 \notin \sigma(S_3P_2^{\dag})$, the other pair of double proper splittings induces another double proper splitting   $\widehat{A}_2 = \widehat{P}_2-\widehat{R}_2+\widehat{S}_2$ to solve $\widehat{A}_2x = \widehat{b}_2$. The corresponding iteration matrix is   
\begin{equation*}
{\bf W}_{23} = {\bf T}_3{\bf G}_2 =  \begin{pmatrix}
    P_3^{\dag}R_3-P_3^{\dag}S_3P_2^{\dag}R_2 & P_3^{\dag}S_3P_2^{\dag}S_2\\ 
    I & 0
    \end{pmatrix}.    
\end{equation*}
 
Next result presents a comparison result between the spectral radii of ${\bf W}_{13}$ and ${\bf W}_{23}$ which will help to know which pair of double splittings yields a better ADS scheme.

\begin{theorem}\label{dcomp}
Let $A = P_1-R_1+S_1 = P_2-R_2+S_2$ be two double proper weak regular  splittings of $A\in{\mathbb{R}^{m\times{n}}}$. Suppose that $A = P_3-R_3+S_3$ is a double proper regular splitting with $N(S_3) \supseteq N(P_3)$, $R(S_3) \subseteq R(P_3)$, $1 \notin \sigma(S_3P_i^{\dag})$ and  $\widehat{A}_i^{\dag}\widehat{P}_i\geq 0$ for $i = 1,2$. If $P_1^{\dag}\geq P_2^{\dag}$ and one of the following conditions \\
$(1)$ $P_1^{\dag}R_1\geq P_2^{\dag}R_2$\\
$(2)$ $P_1^{\dag}S_1\geq P_2^{\dag}S_2$\\
    holds, then $\rho(W_{13})\leq \rho(W_{23})<1$.
\end{theorem}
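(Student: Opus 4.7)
The plan is to adapt the Perron eigenvector strategy used in the proof of Theorem \ref{altcomp121}. Both $W_{13}$ and $W_{23}$ are componentwise non-negative---indeed $P_3^{\dag}R_3 \geq 0$, $-P_3^{\dag}S_3 \geq 0$, $P_i^{\dag}R_i \geq 0$ and $-P_i^{\dag}S_i \geq 0$ for $i=1,2$---so Theorem \ref{2.1.4} applies. Applying Theorem \ref{2.1.10} to the preconditioned double proper weak splittings $\widehat{A}_i = \widehat{P}_i - \widehat{R}_i + \widehat{S}_i$ (whose hypothesis $\widehat{A}_i^{\dag}\widehat{P}_i \geq 0$ is assumed) gives $\rho(W_{13}), \rho(W_{23}) < 1$. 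If $\rho(W_{13}) = 0$ the conclusion is immediate, so I assume $\rho(W_{13}) > 0$ and pick a non-negative Perron eigenvector $\mathbf{y} = (y_1, y_2)^{T}$ of $W_{13}$, which yields
\[
(P_3^{\dag}R_3 - P_3^{\dag}S_3 P_1^{\dag}R_1) y_1 + P_3^{\dag}S_3 P_1^{\dag}S_1 y_2 = \rho(W_{13}) y_1, \qquad y_1 = \rho(W_{13}) y_2.
\]

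Premultiplying the first relation by $P_3$ and using $R(R_3), R(S_3) \subseteq R(P_3)$ (so that $P_3 P_3^{\dag}R_3 = R_3$ and $P_3 P_3^{\dag}S_3 = S_3$) together with $y_1 = \rho(W_{13}) y_2$ delivers
\[
\rho(W_{13})^{2} P_3 y_1 = \rho(W_{13})(R_3 - S_3 P_1^{\dag}R_1) y_1 + S_3 P_1^{\dag}S_1 y_1,
\]
whose right-hand side is non-negative (since $R_3 \geq 0$, $-S_3 \geq 0$, $P_1^{\dag}R_1 \geq 0$, and $S_3 P_1^{\dag}S_1 \geq 0$ as the product of two non-positive matrices); hence $P_3 y_1 \geq 0$. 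Replicating the chain of inequalities from Theorem \ref{altcomp121}---using $\rho(W_{13})^{2} \leq \rho(W_{13})$, the identity $P_1^{\dag}P_1 = P_3^{\dag}P_3 = A^{\dag}A$, and $S_3 P_3^{\dag} P_3 = S_3$ (valid because $N(S_3) \supseteq N(P_3)$)---produces $0 \leq \rho(W_{13})(I - S_3 P_1^{\dag}) A y_1 = \rho(W_{13}) \widehat{A}_1 y_1$, from which $A y_1 \geq 0$ (and hence $A y_2 \geq 0$) is extracted exactly as in Theorem \ref{altcomp121}.

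The heart of the proof is to verify $W_{23}\mathbf{y} - \rho(W_{13})\mathbf{y} \geq 0$. The second block of the difference vanishes by $y_1 = \rho(W_{13}) y_2$, and substituting the eigenvalue relation collapses the first block to $P_3^{\dag}S_3 \cdot \eta$, where
\[
\eta = (P_1^{\dag}R_1 - P_2^{\dag}R_2) y_1 + (P_2^{\dag}S_2 - P_1^{\dag}S_1) y_2.
\]
Because $P_3^{\dag}S_3 \leq 0$, it is enough to show $\eta \leq 0$. The identity
\[
(P_1^{\dag}R_1 - P_2^{\dag}R_2) + (P_2^{\dag}S_2 - P_1^{\dag}S_1) = (P_2^{\dag} - P_1^{\dag}) A,
\]
obtained by combining the two given splittings of $A$ with $P_1^{\dag}P_1 = P_2^{\dag}P_2$, makes the regrouping possible. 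Under hypothesis (1), adding and subtracting $(P_1^{\dag}R_1 - P_2^{\dag}R_2) y_2$ gives $\eta = (P_1^{\dag}R_1 - P_2^{\dag}R_2)(y_1 - y_2) + (P_2^{\dag} - P_1^{\dag}) A y_2$; both summands are non-positive because $y_1 - y_2 = (\rho(W_{13}) - 1) y_2 \leq 0$, $P_2^{\dag} - P_1^{\dag} \leq 0$, and $A y_2 \geq 0$. Under hypothesis (2), the symmetric grouping $\eta = (P_1^{\dag}S_1 - P_2^{\dag}S_2)(y_1 - y_2) + (P_2^{\dag} - P_1^{\dag}) A y_1$ is non-positive by the same sign analysis together with $P_1^{\dag}S_1 - P_2^{\dag}S_2 \geq 0$. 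Theorem \ref{2.1.2} applied to $\rho(W_{13})\mathbf{y} \leq W_{23}\mathbf{y}$ then yields $\rho(W_{13}) \leq \rho(W_{23}) < 1$.

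The main obstacle is the passage from the preconditioned non-negativity $\widehat{A}_1 y_1 \geq 0$ to $A y_1 \geq 0$; this is the same implicit step that appears in Theorem \ref{altcomp121} and is the only nontrivial transfer of sign information from the preconditioned system back to the original matrix $A$. Once granted, everything else reduces to the two identities connecting the three splittings, the sign bookkeeping governed by $P_1^{\dag} \geq P_2^{\dag}$ and the alternative (1)/(2), and a direct application of Theorem \ref{2.1.2}.
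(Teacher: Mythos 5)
Your proof is correct and takes essentially the same route as the paper's: a non-negative Perron eigenvector of ${\bf W}_{13}$, the cancellation of the second block via $y_1=\rho(W_{13})y_2$, the identity $(P_1^{\dag}R_1-P_2^{\dag}R_2)+(P_2^{\dag}S_2-P_1^{\dag}S_1)=(P_2^{\dag}-P_1^{\dag})A$ combined with $P_1^{\dag}\geq P_2^{\dag}$ and $Ay_1\geq 0$, and a final appeal to Theorem \ref{2.1.2}; your factoring of $P_3^{\dag}S_3$ out of the first block and showing the residual vector $\eta$ is non-positive is only a cosmetic rearrangement of the paper's lower bound on its quantity $\nabla$. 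The step $Ay_1\geq 0$ that you flag is precisely the one the paper also imports by citing Theorem \ref{altcomp121}.
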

\begin{proof}
Clearly, we have $\rho(W_{13})<1$ and $\rho(W_{23})<1$, by Theorem \ref{conv3}.\\
Case (i): $\rho(W_{13}) = 0$.  The proof is obvious.\\ 
Case (ii): $\rho(W_{13}) \neq 0$. Since ${\bf W}_{13}\geq 0$, there exists a non-negative eigenvector
${\bf x} = \begin{pmatrix}
    x_1 \\
    x_2
\end{pmatrix} $
such that ${\bf W}_{13}{\bf x} = \rho(W_{13}){\bf x}$ by Theorem \ref{2.1.4}. This implies
\begin{align*}
 (P_3^{\dag}R_3-P_3^{\dag}S_3P_1^{\dag}R_1)x_1+P_3^{\dag}S_3P_1^{\dag}S_1x_2 = \rho(W_{13})x_1\\  
  x_1 = \rho(W_{13})x_2.
\end{align*}
As an immediate consequence, we have
\begin{eqnarray*}
{\bf W}_{23}{\bf x}-\rho(W_{13}){\bf x} &=& \begin{pmatrix}
    (P_3^{\dag}R_3-P_3^{\dag}S_3P_2^{\dag}R_2)x_1+P_3^{\dag}S_3P_2^{\dag}S_2x_2-\rho(W_{13})x_1\\ 
    x_1-\rho(W_{13})x_2
    \end{pmatrix}\\  
&=& \begin{pmatrix}
    -P_3^{\dag}S_3P_2^{\dag}R_2x_1+\displaystyle{\frac{1}{\rho( W_{13})}}P_3^{\dag}S_3P_2^{\dag}S_2x_1+P_3^{\dag}S_3P_1^{\dag}R_1x_1-\displaystyle{\frac{1}{\rho (W_{13})}}P_3^{\dag}S_3P_1^{\dag}S_1x_1\\ 
    0
    \end{pmatrix}\\
   &=& \begin{pmatrix}
   \nabla\\ 
    0
    \end{pmatrix},
\end{eqnarray*}
where \begin{equation*}\label{eq22}
   \nabla = P_3^{\dag}S_3(P_1^{\dag}R_1-P_2^{\dag}R_2)x_1+\displaystyle{\frac{1}{\rho(W_{13})}}P_3^{\dag}S_3(P_2^{\dag}S_2-P_1^{\dag}S_1)x_1. 
\end{equation*}
If the first condition $P_1^{\dag}R_1\geq P_2^{\dag}R_2$ holds, we then have
\begin{eqnarray*}
&& \nabla -\frac{1}{\rho(W_{13})}P_3^{\dag}S_3(P_1^{\dag}R_1-P_2^{\dag}R_2)x_1-\frac{1}{\rho(W_{13})}P_3^{\dag}S_3(P_2^{\dag}S_2-P_1^{\dag}S_1)x_1\\
&=& P_3^{\dag}S_3(P_1^{\dag}R_1-P_2^{\dag}R_2)x_1-\frac{1}{\rho(W_{13})}P_3^{\dag}S_3(P_1^{\dag}R_1-P_2^{\dag}R_2)x_1\\
&=& \left(1-\frac{1}{\rho(W_{13})}\right)P_3^{\dag}S_3(P_1^{\dag}R_1-P_2^{\dag}R_2)x_1 \geq 0.
\end{eqnarray*}
\noindent Therefore,
\begin{eqnarray*}
\nabla &\geq& \frac{1}{\rho(W_{13})}P_3^{\dag}S_3(P_1^{\dag}R_1-P_2^{\dag}R_2)x_1+\frac{1}{\rho(W_{13})}P_3^{\dag}S_3(P_2^{\dag}S_2-P_1^{\dag}S_1)x_1\\
&=& \frac{1}{\rho(W_{13})}P_3^{\dag}S_3\left(P_1^{\dag}R_1-P_1^{\dag}S_1+P_2^{\dag}S_2-P_2^{\dag}R_2\right)x_1\\
&=& \frac{1}{\rho(W_{13})}P_3^{\dag}S_3\left(P_1^{\dag}(R_1-S_1)+P_2^{\dag}(S_2-R_2)\right)x_1\\
&=& \frac{1}{\rho(W_{13})}P_3^{\dag}S_3\left(P_1^{\dag}(P_1-A)+P_2^{\dag}(A-P_2)\right)x_1\\
&=& \frac{1}{\rho(W_{13})}P_3^{\dag}S_3(P_2^{\dag}A-P_1^{\dag}A)x_1 ~~~~~~~~~~~~~~~~~~~~(\because P_2^{\dag}P_2=P_1^{\dag}P_1)\\
&=& \frac{1}{\rho(W_{13})}P_3^{\dag}S_3(P_2^{\dag}-P_1^{\dag})Ax_1 \geq 0,
\end{eqnarray*}
using the fact $Ax_1\geq 0$  as shown in Theorem \ref{altcomp121}.
Hence ${\bf W}_{23}{\bf x}-\rho(W_{13}){\bf x} \geq 0$. By Theorem \ref{2.1.2}, $\rho(W_{13})\leq \rho(W_{23})$.\\
Similarly, if $P_1^{\dag}S_1\geq P_2^{\dag}S_2$, one can easily show that
$$ \nabla-P_3^{\dag}S_3(P_1^{\dag}R_1-P_2^{\dag}R_2)x_1-P_3^{\dag}S_3(P_2^{\dag}S_2-P_1^{\dag}S_1)x_1\geq 0$$
i.e., 
$$\nabla  \geq P_3^{\dag}S_3(P_1^{\dag}R_1-P_2^{\dag}R_2)x_1+P_3^{\dag}S_3(P_2^{\dag}S_2-P_1^{\dag}S_1)x_1 \geq 0.
$$
Thus, ${\bf W}_{23}{\bf x}-\rho(W_{13}){\bf x} \geq 0$. By Theorem \ref{2.1.2}, it follows that $\rho(W_{13})\leq \rho(W_{23})$.

\end{proof}

Note that the condition $\widehat{A}_i^{\dag}\widehat{P}_i\geq 0$ for $i = 1,2$ is assumed in the above theorem as the class of matrices ($\widehat{A}^{\dag}_i\widehat{P}_i\geq 0$) is bigger than the class $\widehat{A}_i^{\dag}\geq 0$ for $i = 1,2$ and each double proper regular (weak) splitting is also a double proper weak splitting. The above theorem is also true if we replace the condition $P_1^{\dag}\geq P_2^{\dag}$ by $P_1^{\dag}A\geq P_2^{\dag}A$.
We have the following corollary to the above result in the case of non-singular matrix setting.

\begin{corollary}[Theorem $3.2$, \cite{li}]
Let $A = P_1-R_1+S_1 = P_2-R_2+S_2$ be two double weak regular  splittings and $A = P_3-R_3+S_3$ be a double regular splitting of a  real non-singular matrix $A$. Suppose that $1 \notin \sigma(S_3P_i^{-1})$ and  $\widehat{A}_i^{-1}\widehat{P}_i\geq 0$ for $i = 1,2$. If $P_1^{-1}\geq P_2^{-1}$ and one of the following conditions \\
$(1)$ $P_1^{-1}R_1\geq P_2^{-1}R_2$\\
$(2)$ $P_1^{-1}S_1\geq P_2^{-1}S_2$\\
holds, then $\rho(W_{13})\leq \rho(W_{23})<1$.
\end{corollary}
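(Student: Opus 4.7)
The plan is to obtain the corollary as an immediate specialization of Theorem \ref{dcomp} to the case $m = n$ and $A$ non-singular. First I would note that whenever $A \in \mathbb{R}^{n \times n}$ is non-singular, the Moore-Penrose inverse $A^{\dag}$ coincides with $A^{-1}$, and similarly $P_i^{\dag} = P_i^{-1}$ for $i = 1,2,3$, because each double proper splitting demands $R(P_i) = R(A) = \mathbb{R}^n$ and $N(P_i) = N(A) = \{0\}$, forcing every $P_i$ to be non-singular. Consequently, a double proper weak regular splitting in the rectangular sense reduces to a double weak regular splitting in the classical sense, and similarly for the regular case, so the hypotheses of the corollary are literally the hypotheses of Theorem \ref{dcomp} stated with ordinary inverses.

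Next I would check that the range/null-space conditions on $S_3$ in Theorem \ref{dcomp} are automatic here. Since $P_3$ is non-singular, $R(P_3) = \mathbb{R}^n$ and $N(P_3) = \{0\}$, so $R(S_3) \subseteq R(P_3)$ and $N(S_3) \supseteq N(P_3)$ hold trivially for every $S_3 \in \mathbb{R}^{n \times n}$. The spectral conditions $1 \notin \sigma(S_3 P_i^{-1})$ translate directly, as does the non-negativity assumption $\widehat{A}_i^{-1} \widehat{P}_i \geq 0$ (which specializes $\widehat{A}_i^{\dag} \widehat{P}_i \geq 0$ in the non-singular setting, where $\widehat{A}_i = (I - S_3 P_i^{-1}) A$ is itself non-singular by the hypothesis $1 \notin \sigma(S_3 P_i^{-1})$ together with the non-singularity of $A$).

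Finally I would invoke Theorem \ref{dcomp} directly: under either of the two alternative orderings $P_1^{-1} R_1 \geq P_2^{-1} R_2$ or $P_1^{-1} S_1 \geq P_2^{-1} S_2$, combined with $P_1^{-1} \geq P_2^{-1}$, Theorem \ref{dcomp} gives $\rho(W_{13}) \leq \rho(W_{23}) < 1$, which is exactly the conclusion of the corollary. Since this is a pure specialization, there is essentially no obstacle in the proof beyond verifying that each rectangular-setting hypothesis collapses cleanly to the non-singular one; the only mild point to state explicitly is the automatic validity of $R(S_3) \subseteq R(P_3)$ and $N(S_3) \supseteq N(P_3)$, so that the reader sees that no extra conditions are being hidden. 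A one-line proof of the form ``this follows from Theorem \ref{dcomp} by setting $A^{\dag} = A^{-1}$ and observing that the range and null-space conditions are vacuous'' is therefore sufficient.
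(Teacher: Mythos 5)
Your proposal is correct and matches the paper's treatment: the paper states this result as an immediate corollary of Theorem \ref{dcomp} with no written proof, precisely because in the non-singular case $P_i^{\dag}=P_i^{-1}$, the conditions $R(S_3)\subseteq R(P_3)$ and $N(S_3)\supseteq N(P_3)$ are vacuous, and the remaining hypotheses translate verbatim. Your explicit verification of these collapses is exactly the routine check the authors left implicit.
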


We have the following comparison result between the spectral radii of the iteration matrices ${\bf W}_{13}$ and ${\bf W}_{23}$ which is motivated by the proof of Theorem 3.7, \cite{jmp}.

\begin{theorem}\label{dcomp3}
Let $A = P_1-R_1+S_1 = P_2-R_2+S_2$ be two double proper weak regular  splittings of a semi-monotone matrix $A\in{\mathbb{R}^{m\times{n}}}$. Suppose that $A = P_3-R_3+S_3$ is a  double proper regular splitting with $N(S_3) \supseteq N(P_3)$, $R(S_3) \subseteq R(P_3)$, $1 \notin \sigma(S_3P_i^{\dag})$ and $\widehat{A}_i^{\dag}\geq 0$ for $i = 1,2$. If $\widehat{P}_1^{\dag}\widehat{A}_1\geq \widehat{P}_2^{\dag}\widehat{A}_2$ and $\widehat{P}_1^{\dag}\widehat{R}_1\geq \widehat{P}_2^{\dag}\widehat{R}_2$, then $\rho(W_{13})\leq \rho(W_{23})<1$.
\end{theorem}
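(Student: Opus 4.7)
The plan is to imitate the architecture of the proof of Theorem \ref{dcomp}: establish convergence of both $W_{13}$ and $W_{23}$, extract via Theorem \ref{2.1.4} a non-negative eigenvector ${\bf x}=(x_1,x_2)^{t}$ of $W_{13}$ at $\rho(W_{13})$, verify componentwise that $W_{23}{\bf x}-\rho(W_{13}){\bf x}\geq 0$, and conclude with Theorem \ref{2.1.2}. For convergence, each induced splitting $\widehat{A}_i=\widehat{P}_i-\widehat{R}_i+\widehat{S}_i$ inherits the double proper weak regular structure from the weak regular splitting of $A$ via $P_i$ composed with the regular splitting $A=P_3-R_3+S_3$, so Theorem \ref{conv2} together with $\widehat{A}_i^{\dag}\geq 0$ gives $\rho(W_{13}),\rho(W_{23})<1$. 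After disposing of the trivial case $\rho(W_{13})=0$, the eigenvalue equation supplies the block relations $\widehat{P}_1^{\dag}\widehat{R}_1 x_1-\widehat{P}_1^{\dag}\widehat{S}_1 x_2=\rho(W_{13})x_1$ and $x_1=\rho(W_{13})x_2$.

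I would then compute $W_{23}{\bf x}-\rho(W_{13}){\bf x}$. Its lower block vanishes by the second eigen-relation, and eliminating $x_2$ from the upper block reduces it to $(\widehat{P}_2^{\dag}\widehat{R}_2-\widehat{P}_1^{\dag}\widehat{R}_1)x_1+\tfrac{1}{\rho(W_{13})}(\widehat{P}_1^{\dag}\widehat{S}_1-\widehat{P}_2^{\dag}\widehat{S}_2)x_1$. The crucial algebraic step is to exploit the identity $\widehat{P}_i^{\dag}\widehat{S}_i=\widehat{P}_i^{\dag}\widehat{A}_i-\widehat{P}_i^{\dag}\widehat{P}_i+\widehat{P}_i^{\dag}\widehat{R}_i$ that follows from $\widehat{A}_i=\widehat{P}_i-\widehat{R}_i+\widehat{S}_i$. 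Since $\widehat{P}_1=\widehat{P}_2=P_3$, the $\widehat{P}_i^{\dag}\widehat{P}_i$ contributions cancel upon forming the difference at $i=1$ and $i=2$, giving $\widehat{P}_1^{\dag}\widehat{S}_1-\widehat{P}_2^{\dag}\widehat{S}_2=(\widehat{P}_1^{\dag}\widehat{A}_1-\widehat{P}_2^{\dag}\widehat{A}_2)+(\widehat{P}_1^{\dag}\widehat{R}_1-\widehat{P}_2^{\dag}\widehat{R}_2)$. Substituting and grouping like terms rewrites the upper block as $\bigl(1-\tfrac{1}{\rho(W_{13})}\bigr)(\widehat{P}_2^{\dag}\widehat{R}_2-\widehat{P}_1^{\dag}\widehat{R}_1)x_1+\tfrac{1}{\rho(W_{13})}(\widehat{P}_1^{\dag}\widehat{A}_1-\widehat{P}_2^{\dag}\widehat{A}_2)x_1$, which is a sum of two non-negative vectors: the scalar $1-\tfrac{1}{\rho(W_{13})}$ is negative and multiplies $\widehat{P}_2^{\dag}\widehat{R}_2-\widehat{P}_1^{\dag}\widehat{R}_1\leq 0$ supplied by the second hypothesis, while the first hypothesis yields $\widehat{P}_1^{\dag}\widehat{A}_1-\widehat{P}_2^{\dag}\widehat{A}_2\geq 0$. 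Applied to $x_1\geq 0$, both summands are non-negative, so $W_{23}{\bf x}\geq\rho(W_{13}){\bf x}$, and Theorem \ref{2.1.2} closes the argument.

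The step I expect to be the main obstacle is spotting the decomposition of $\widehat{P}_1^{\dag}\widehat{S}_1-\widehat{P}_2^{\dag}\widehat{S}_2$ into exactly the combination of quantities controlled by the two hypotheses. The cancellation $\widehat{P}_1^{\dag}\widehat{P}_1=\widehat{P}_2^{\dag}\widehat{P}_2$, available precisely because both preconditioned splittings share the common $\widehat{P}=P_3$, is what makes the identity work; absent this observation the upper block does not factor cleanly into pieces that the hypotheses govern separately. Once the identity is in hand, the sign accounting is a routine adaptation of the bookkeeping already performed in Theorem \ref{dcomp}.
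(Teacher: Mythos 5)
Your proposal is correct and follows essentially the same route as the paper: convergence of both iteration matrices via Theorem \ref{conv2}, extraction of a non-negative eigenvector of ${\bf W}_{13}$, the identity $\widehat{R}_i-\widehat{S}_i=\widehat{P}_i-\widehat{A}_i$ with the cancellation $\widehat{P}_1^{\dag}\widehat{P}_1=\widehat{P}_2^{\dag}\widehat{P}_2=P_3^{\dag}P_3$, and Theorem \ref{2.1.2} to close. The only (cosmetic) difference is that you write the upper block exactly as $\bigl(1-\tfrac{1}{\rho(W_{13})}\bigr)(\widehat{P}_2^{\dag}\widehat{R}_2-\widehat{P}_1^{\dag}\widehat{R}_1)x_1+\tfrac{1}{\rho(W_{13})}(\widehat{P}_1^{\dag}\widehat{A}_1-\widehat{P}_2^{\dag}\widehat{A}_2)x_1$, whereas the paper first discards the first (non-negative) summand via a one-sided bound and then applies the same identity.
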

\begin{proof}
We have $\widehat{A}_1 = (I-S_3P_1^{\dag})A$, $\widehat{P}_1 = P_3$, $\widehat{R}_1 = R_3-S_3P_1^{\dag}R_1$ and $\widehat{S}_1 = -S_3P_1^{\dag}S_1$ corresponding to the two  double proper weak regular splittings $A = P_1-R_1+S_1 = P_3-R_3+S_3$. Again, the double  proper weak regular splittings $A = P_2-R_2+S_2 = P_3-R_3+S_3$ give $\widehat{A}_2 = (I-S_3P_2^{\dag})A$, $\widehat{P}_2 = P_3$, $\widehat{R}_2 = R_3-S_3P_2^{\dag}R_2$ and $\widehat{S}_2 = -S_3P_2^{\dag}S_2$. The respective iteration matrices are
\begin{equation*}
  \widehat{\bf T}_1 = \begin{pmatrix}
    \widehat{P}_1^{\dag}\widehat{R}_1 & -\widehat{P}_1^{\dag}\widehat{S}_1 \\
    I & 0
  \end{pmatrix}={\bf W}_{13},~ \widehat{\bf T}_2 = \begin{pmatrix}
    \widehat{P}_2^{\dag}\widehat{R}_2 & -\widehat{P}_2^{\dag}\widehat{S}_2 \\
    I & 0
  \end{pmatrix}={\bf W}_{23}.
\end{equation*}
By Theorem \ref{conv2}, we have $\rho(W_{13})<1$ and $\rho(W_{23})<1$. \\
Case (i): $\rho(W_{13}) = 0$.  The proof is obvious.\\ 
Case (ii): $\rho(W_{13}) \neq 0$.
Since ${\bf W}_{13}\geq 0$,  there exists a non-negative eigenvector
${\bf x} = \begin{pmatrix}
    x_1 \\
    x_2
\end{pmatrix} $
such that ${\bf W}_{13}{\bf x} = \rho(W_{13}){\bf x}$ by Theorem \ref{2.1.4}.
   This yields
\begin{align*}
 \widehat{P}_1^{\dag}\widehat{R}_1x_1-\widehat{P}_1^{\dag}\widehat{S}_1x_2 = \rho(W_{13})x_1\\  
  x_1 = \rho(W_{13})x_2. 
\end{align*}
By using the above two equations, we have
\begin{eqnarray*}
{\bf W}_{23}{\bf x}-\rho(W_{13}){\bf x} &=& \begin{pmatrix}
    \widehat{P}_2^{\dag}\widehat{R}_2 x_1-\widehat{P}_2^{\dag}\widehat{S}_2x_2-\rho(W_{13})x_1\\ 
    x_1-\rho(W_{13})x_2
    \end{pmatrix}\\  
&=& \begin{pmatrix}
    \widehat{P}_2^{\dag}\widehat{R}_2 x_1-\widehat{P}_2^{\dag}\widehat{S}_2x_2-\widehat{P}_1^{\dag}\widehat{R}_1x_1+\widehat{P}_1^{\dag}\widehat{S}_1x_2\\ 
    0
    \end{pmatrix}\\
 &=& \begin{pmatrix}
    \widehat{P}_2^{\dag}\widehat{R}_2 x_1-\widehat{P}_1^{\dag}\widehat{R}_1x_1-\displaystyle{\frac{1}{\rho(W_{13})}}(\widehat{P}_2^{\dag}\widehat{S}_2-\widehat{P}_1^{\dag}\widehat{S}_1)x_1\\ 
    0
    \end{pmatrix}\\
  &\geq& \begin{pmatrix}
    \displaystyle{\frac{1}{\rho(W_{13})}}(\widehat{P}_2^{\dag}\widehat{R}_2-\widehat{P}_1^{\dag}\widehat{R}_1-\widehat{P}_2^{\dag}\widehat{S}_2+\widehat{P}_1^{\dag}\widehat{S}_1)x_1\\ 
    0
    \end{pmatrix}\\  
   &=& \begin{pmatrix}
    \displaystyle{\frac{1}{\rho(W_{13})}}\left(\widehat{P}_2^{\dag}(\widehat{R}_2-\widehat{S}_2)-\widehat{P}_1^{\dag}(\widehat{R}_1-\widehat{S}_1)\right)x_1\\ 
    0
    \end{pmatrix}\\
    &=& \begin{pmatrix}
    \displaystyle{\frac{1}{\rho(W_{13})}}(P_3^{\dag}P_3-\widehat{P}_2^{\dag}\widehat{A}_2-P_3^{\dag}P_3+\widehat{P}_1^{\dag}\widehat{A}_1)x_1\\ 
    0
    \end{pmatrix}\\ 
    &=& \begin{pmatrix}
    \displaystyle{\frac{1}{\rho(W_{13})}}(\widehat{P}_1^{\dag}\widehat{A}_1-\widehat{P}_2^{\dag}\widehat{A}_2)x_1\\ 
    0
    \end{pmatrix}\geq 0.
\end{eqnarray*}
 Hence, the conclusion follows  by Theorem \ref{2.1.2}.
\end{proof}

\begin{corollary}%[Corollary $3.3$, \cite{li}]
Let $A = P_1-R_1+S_1 = P_2-R_2+S_2$ be two double weak regular  splittings of a monotone matrix $A\in{\mathbb{R}^{n\times{n}}}$.  Suppose that $A = P_3-R_3+S_3$ be a double regular  splitting with $1 \notin \sigma(S_3P_i^{-1})$ and  $\widehat{A}_i^{-1}\geq 0$ for $i = 1,2$. If $\widehat{P}_1^{-1}\widehat{A}_1\geq \widehat{P}_2^{-1}\widehat{A}_2$ and $\widehat{P}_1^{-1}\widehat{R}_1\geq \widehat{P}_2^{-1}\widehat{R}_2$, then $\rho(W_{13})\leq \rho(W_{23})<1$.
\end{corollary}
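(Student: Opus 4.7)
The plan is to deduce this corollary as an immediate specialization of Theorem \ref{dcomp3}, since the non-singular (monotone) setting is precisely the case $m=n$ with $A$ invertible and $A^{-1}\ge 0$, in which the Moore-Penrose inverse of every matrix in sight that has the same range/null space as $A$ coincides with the ordinary inverse.

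First, I would verify that every hypothesis of Theorem \ref{dcomp3} holds in this restricted context. Monotonicity of $A$ gives $A^{\dag}=A^{-1}\ge 0$, so $A$ is in particular semi-monotone. Since $R(A)=\mathbb{R}^n$ and $N(A)=\{0\}$, the defining conditions $R(P_i)=R(A)$ and $N(P_i)=N(A)$ of a proper splitting force $P_i$ to be non-singular for $i=1,2,3$, so each double splitting is a double proper splitting and, for $P_3$, the set-theoretic conditions $N(S_3)\supseteq N(P_3)=\{0\}$ and $R(S_3)\subseteq R(P_3)=\mathbb{R}^n$ are automatic. Moreover, a double regular splitting is trivially a double proper regular splitting, hence also a double proper weak regular splitting as required, and similarly each double weak regular splitting is a double proper weak regular splitting. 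The preconditioned matrices $\widehat{A}_i=(I-S_3P_i^{-1})A$ are non-singular by the assumption $1\notin \sigma(S_3P_i^{-1})$, so $\widehat{A}_i^{\dag}=\widehat{A}_i^{-1}$ and $\widehat{P}_i^{\dag}=\widehat{P}_i^{-1}=P_3^{-1}$, aligning the hypotheses $\widehat{A}_i^{-1}\ge 0$, $\widehat{P}_1^{-1}\widehat{A}_1\ge \widehat{P}_2^{-1}\widehat{A}_2$ and $\widehat{P}_1^{-1}\widehat{R}_1\ge \widehat{P}_2^{-1}\widehat{R}_2$ precisely with those of Theorem \ref{dcomp3}.

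Second, I would observe that under these identifications the iteration matrices $W_{13}$ and $W_{23}$ defined from the Moore-Penrose formulas reduce verbatim to the ones that would be produced from ordinary inverses, so $\rho(W_{13})$ and $\rho(W_{23})$ in the corollary coincide with the spectral radii studied in the theorem. Applying Theorem \ref{dcomp3} then yields $\rho(W_{13})\le \rho(W_{23})<1$.

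I do not anticipate any genuine obstacle here, since the work is essentially bookkeeping: the only point that requires a brief comment is the automatic verification of the range and null-space containments for $S_3$ in the non-singular case, after which the theorem applies without further modification. If one wished to give a self-contained proof instead, it would be a near-verbatim copy of the proof of Theorem \ref{dcomp3} with every $\dag$ replaced by $-1$; in particular Theorem \ref{2.1.4} supplies a Perron eigenvector of $W_{13}$, one then subtracts $\rho(W_{13}){\bf x}$ from $W_{23}{\bf x}$, and reduces the resulting block to $\rho(W_{13})^{-1}(\widehat{P}_1^{-1}\widehat{A}_1-\widehat{P}_2^{-1}\widehat{A}_2)x_1\ge 0$ using $P_3^{-1}P_3=I$ and $\widehat{R}_i-\widehat{S}_i=\widehat{P}_i-\widehat{A}_i$, so that Theorem \ref{2.1.2} delivers the conclusion.
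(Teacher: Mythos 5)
Your proposal is correct and matches the paper's intent exactly: the corollary is stated there as an immediate specialization of Theorem \ref{dcomp3} to the non-singular case, where the Moore--Penrose inverses become ordinary inverses and the range/null-space conditions on $S_3$ hold vacuously. Your verification of these reductions is precisely the bookkeeping the paper leaves implicit.
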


\noindent Observe that the above comparisons are made between two pairs of double proper splittings with one common double proper  splitting out of three independent double proper splittings. Further, we are interested to reveal the comparison of two independent pairs of double proper splittings tailored by the TG-ADS scheme \eqref{eq5}. 
To this end, let us consider the first pair of double proper splittings $A = 
P_1-R_1+S_1 = P_2-R_2+S_2$ such that $N(S_2)\supseteq N(P_2)$, $R(S_2)\subseteq R(P_2)$ and $1 \notin \sigma(S_2P_1^{\dag})$. As per the convergence analysis described in the subsection \ref{sec:cgs}, the corresponding induced splitting $\widehat{A}_1 = \widehat{P}_1-\widehat{R}_1+\widehat{S}_1$ is a double proper  splitting, where $\widehat{A}_1 = (I-S_2P_1^{\dag})A$, $\widehat{P}_1 = P_2$, $\widehat{R}_1 = R_2-S_2P_1^{\dag}R_1$ and $\widehat{S}_1 = -S_2P_1^{\dag}S_1$. In this case, the iteration matrix of the TG-ADS scheme  is 
\begin{equation*}
 {\bf W}_{12} =  \begin{pmatrix}
    P_2^{\dag}R_2-P_2^{\dag}S_2P_1^{\dag}R_1 & P_2^{\dag}S_2P_1^{\dag}S_1\\ 
    I & 0
    \end{pmatrix}
    \end{equation*}
    which is also the iteration matrix of double iteration scheme  \eqref{preeq1} to solve the system $\widehat{A}_1x = \widehat{b}_1$.
Similarly, the other pair of double proper  splittings 
$A = P_3-R_3+S_3 = P_4-R_4+S_4$, satisfying $N(S_4)\supseteq N(P_4)$, $R(S_4)\subseteq R(P_4)$ and $1 \notin \sigma(S_4P_3^{\dag})$ yields 
   
\begin{equation*}
{\bf W}_{34} =  \begin{pmatrix}
    P_4^{\dag}R_4-P_4^{\dag}S_4P_3^{\dag}R_3 & P_4^{\dag}S_4P_3^{\dag}S_3\\ 
    I & 0
    \end{pmatrix}. 
\end{equation*}
The following theorem presents different sufficient conditions for choosing a better ADS scheme, and its proof is analogous to the proof of Theorem \ref{dcomp3}. 

\begin{theorem}\label{wcomp1}
Let $A = P_1-R_1+S_1 = P_2-R_2+S_2 = P_3-R_3+S_3 = P_4-R_4+S_4$ be four double  proper weak regular  splittings of a semi-monotone matrix $A\in{\mathbb{R}^{m\times{n}}}$. Suppose that $\widehat{A}_i^{\dag}\geq 0$ for $i = 1,2$, $N(S_2) \supseteq N(P_2)$, $R(S_2) \subseteq R(P_2)$, $N(S_4) \supseteq N(P_4)$, $R(S_4) \subseteq R(P_4)$, $1 \notin \sigma(S_2P_1^{\dag})$ and $1 \notin \sigma(S_4P_3^{\dag})$.  If $\widehat{P}_1^{\dag}\widehat{A}_1\geq \widehat{P}_2^{\dag}\widehat{A}_2$ and $\widehat{P}_1^{\dag}\widehat{R}_1\geq \widehat{P}_2^{\dag}\widehat{R}_2$, then $\rho(W_{12})\leq \rho(W_{34})<1$.
\end{theorem}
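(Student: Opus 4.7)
The plan is to mirror the strategy used in Theorem~\ref{dcomp3}, exploiting that both $W_{12}$ and $W_{34}$ are themselves the double iteration matrices associated to the two induced preconditioned systems. Explicitly, with $\widehat{A}_1=(I-S_2P_1^\dag)A$, $\widehat{P}_1=P_2$, $\widehat{R}_1=R_2-S_2P_1^\dag R_1$, $\widehat{S}_1=-S_2P_1^\dag S_1$, and $\widehat{A}_2=(I-S_4P_3^\dag)A$, $\widehat{P}_2=P_4$, $\widehat{R}_2=R_4-S_4P_3^\dag R_3$, $\widehat{S}_2=-S_4P_3^\dag S_3$, the remark following Subsection~\ref{TG:cgs} shows that both induced splittings $\widehat{A}_i=\widehat{P}_i-\widehat{R}_i+\widehat{S}_i$ are double proper weak regular splittings, so Theorem~\ref{conv2} yields $\rho(W_{12})<1$ and $\rho(W_{34})<1$. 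I would dispatch the trivial case $\rho(W_{12})=0$ separately and then, in the non-trivial case, invoke ${\bf W}_{12}\geq 0$ together with Theorem~\ref{2.1.4} to obtain a non-negative eigenvector ${\bf x}=\begin{pmatrix}x_1\\ x_2\end{pmatrix}$ satisfying ${\bf W}_{12}{\bf x}=\rho(W_{12}){\bf x}$, which unwinds to $\widehat{P}_1^\dag\widehat{R}_1 x_1-\widehat{P}_1^\dag\widehat{S}_1 x_2=\rho(W_{12})x_1$ and $x_1=\rho(W_{12})x_2$.

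The core computation is then to show ${\bf W}_{34}{\bf x}-\rho(W_{12}){\bf x}\geq 0$ so that Theorem~\ref{2.1.2} delivers $\rho(W_{12})\leq \rho(W_{34})$. The bottom block vanishes by $x_1=\rho(W_{12})x_2$. Writing the top block as $(\widehat{P}_2^\dag\widehat{R}_2-\widehat{P}_1^\dag\widehat{R}_1)x_1-\tfrac{1}{\rho(W_{12})}(\widehat{P}_2^\dag\widehat{S}_2-\widehat{P}_1^\dag\widehat{S}_1)x_1$ and applying $\widehat{P}_1^\dag\widehat{R}_1\geq \widehat{P}_2^\dag\widehat{R}_2$ together with $1/\rho(W_{12})\geq 1$ (exactly as in Theorem~\ref{dcomp3}) bounds it below by $\tfrac{1}{\rho(W_{12})}\bigl[\widehat{P}_2^\dag(\widehat{R}_2-\widehat{S}_2)-\widehat{P}_1^\dag(\widehat{R}_1-\widehat{S}_1)\bigr]x_1$; using the identity $\widehat{R}_i-\widehat{S}_i=\widehat{P}_i-\widehat{A}_i$, this rewrites as $\tfrac{1}{\rho(W_{12})}\bigl[(\widehat{P}_2^\dag\widehat{P}_2-\widehat{P}_1^\dag\widehat{P}_1)+(\widehat{P}_1^\dag\widehat{A}_1-\widehat{P}_2^\dag\widehat{A}_2)\bigr]x_1$.

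The main obstacle, and the only real departure from the proof of Theorem~\ref{dcomp3}, is justifying the vanishing of the crossed-projection term $\widehat{P}_2^\dag\widehat{P}_2-\widehat{P}_1^\dag\widehat{P}_1$, now that $\widehat{P}_1=P_2$ and $\widehat{P}_2=P_4$ need no longer agree. This is where properness of all four given splittings is decisive: since $N(P_i)=N(A)$ for each $i=1,2,3,4$, the row spaces $R(P_i^t)=N(P_i)^\perp=N(A)^\perp=R(A^t)$ all coincide, and the identity $P^\dag P=\tilde{P}_{R(P^t)}$ from the preliminaries forces $P_2^\dag P_2=\tilde{P}_{R(A^t)}=P_4^\dag P_4$, i.e., $\widehat{P}_1^\dag\widehat{P}_1=\widehat{P}_2^\dag\widehat{P}_2$. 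After this cancellation, the surviving term $\tfrac{1}{\rho(W_{12})}(\widehat{P}_1^\dag\widehat{A}_1-\widehat{P}_2^\dag\widehat{A}_2)x_1$ is non-negative by the second hypothesis, and Theorem~\ref{2.1.2} concludes $\rho(W_{12})\leq \rho(W_{34})<1$.
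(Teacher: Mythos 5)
Your proof is correct and follows exactly the route the paper has in mind: the paper simply states that the proof of Theorem~\ref{wcomp1} is analogous to that of Theorem~\ref{dcomp3}, and you reproduce that argument faithfully. More importantly, you correctly spot and resolve the one genuine gap in the word ``analogous'': in Theorem~\ref{dcomp3} the cancellation $\widehat{P}_1^\dag\widehat{P}_1=\widehat{P}_2^\dag\widehat{P}_2$ is automatic because both equal $P_3^\dag P_3$, whereas here $\widehat{P}_1=P_2$ and $\widehat{P}_2=P_4$ differ, and your observation that properness forces $N(P_2)=N(A)=N(P_4)$, hence $P_2^\dag P_2=\tilde{P}_{R(A^t)}=P_4^\dag P_4$, is exactly what makes the analogy go through.
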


% \begin{proof}
% The idea of the proof is analogous to the proof of Theorem \ref{dcomp3}. 
% \end{proof}
As a consequence, we have the next result. 
\begin{corollary}
Let $A = P_1-R_1+S_1 = P_2-R_2+S_2 = P_3-R_3+S_3 = 
P_4-R_4+S_4$ be four double weak regular  splittings of a monotone matrix $A\in{\mathbb{R}^{n\times{n}}}$. Suppose that $\widehat{A}_i^{-1}\geq 0$ for $i = 1,2$, $1 \notin \sigma(S_2P_1^{-1})$ and $1 \notin \sigma(S_4P_3^{-1})$.  If $\widehat{P}_1^{-1}\widehat{A}_1\geq \widehat{P}_2^{-1}\widehat{A}_2$ and $\widehat{P}_1^{-1}\widehat{R}_1\geq \widehat{P}_2^{-1}\widehat{R}_2$, then $\rho(W_{12})\leq \rho(W_{34})<1$.
\end{corollary}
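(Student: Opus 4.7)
The plan is to adapt the proof template of Theorem \ref{dcomp3} to the present four-splitting setting. The key structural observation is that $W_{12}$ and $W_{34}$ are, respectively, the block iteration matrices of the double iteration scheme \eqref{eq3} applied to the two induced double proper splittings constructed in Section \ref{TG:cgs}: namely $\widehat{A}_1=\widehat{P}_1-\widehat{R}_1+\widehat{S}_1$ arising from the pair $(P_1,P_2)$ (with $\widehat{P}_1=P_2$, $\widehat{R}_1=R_2-S_2P_1^{\dag}R_1$, $\widehat{S}_1=-S_2P_1^{\dag}S_1$) and $\widehat{A}_2=\widehat{P}_2-\widehat{R}_2+\widehat{S}_2$ arising from the pair $(P_3,P_4)$ (with $\widehat{P}_2=P_4$, $\widehat{R}_2=R_4-S_4P_3^{\dag}R_3$, $\widehat{S}_2=-S_4P_3^{\dag}S_3$). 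A short check using the weak regular hypotheses on the four original splittings shows that each induced splitting is itself double proper weak regular; combined with $\widehat{A}_i^{\dag}\geq 0$, Theorem \ref{conv2} yields both $\rho(W_{12})<1$ and $\rho(W_{34})<1$ and also provides $W_{12}\geq 0$.

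After dismissing the trivial case $\rho(W_{12})=0$, I would apply Theorem \ref{2.1.4} to $W_{12}\geq 0$ to obtain a non-negative eigenvector ${\bf x}=(x_1,x_2)^{t}$ with $W_{12}{\bf x}=\rho(W_{12}){\bf x}$, i.e.\
$\widehat{P}_1^{\dag}\widehat{R}_1 x_1-\widehat{P}_1^{\dag}\widehat{S}_1 x_2=\rho(W_{12})x_1$ and $x_1=\rho(W_{12})x_2$.
Then I would form $W_{34}{\bf x}-\rho(W_{12}){\bf x}$, eliminate $x_2$ via the second equation, and use $\widehat{P}_1^{\dag}\widehat{R}_1\geq\widehat{P}_2^{\dag}\widehat{R}_2$ together with $1/\rho(W_{12})>1$ to produce the lower bound $\tfrac{1}{\rho(W_{12})}[\widehat{P}_2^{\dag}(\widehat{R}_2-\widehat{S}_2)-\widehat{P}_1^{\dag}(\widehat{R}_1-\widehat{S}_1)]x_1$ on the top block. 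Invoking the splitting identities $\widehat{R}_i-\widehat{S}_i=\widehat{P}_i-\widehat{A}_i$ collapses this to $\tfrac{1}{\rho(W_{12})}\bigl[(\widehat{P}_2^{\dag}\widehat{P}_2-\widehat{P}_1^{\dag}\widehat{P}_1)+(\widehat{P}_1^{\dag}\widehat{A}_1-\widehat{P}_2^{\dag}\widehat{A}_2)\bigr]x_1$, which mirrors exactly the manipulation in Theorem \ref{dcomp3}.

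The principal obstacle, and the only step that is genuinely new relative to Theorem \ref{dcomp3}, is justifying the cancellation $\widehat{P}_1^{\dag}\widehat{P}_1=\widehat{P}_2^{\dag}\widehat{P}_2$. In Theorem \ref{dcomp3} both induced splittings share a common $\widehat{P}=P_3$, so this cancellation is immediate; here $\widehat{P}_1=P_2$ and $\widehat{P}_2=P_4$ are different matrices in general. To handle this, I would exploit that every original splitting is proper, so $N(P_j)=N(A)$ for $j=1,2,3,4$; in particular $N(P_2)=N(P_4)$, whence $R(P_2^{t})=N(P_2)^{\perp}=N(P_4)^{\perp}=R(P_4^{t})$. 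The identity $X^{\dag}X=\tilde{P}_{R(X^{t})}$ recalled in Section 2 then yields $\widehat{P}_1^{\dag}\widehat{P}_1=\tilde{P}_{R(P_2^{t})}=\tilde{P}_{R(P_4^{t})}=\widehat{P}_2^{\dag}\widehat{P}_2$, so the problematic term vanishes.

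With the cancellation in place, the top block reduces to $\tfrac{1}{\rho(W_{12})}(\widehat{P}_1^{\dag}\widehat{A}_1-\widehat{P}_2^{\dag}\widehat{A}_2)x_1\geq 0$ by the final hypothesis, while the bottom block is zero thanks to $x_1=\rho(W_{12})x_2$. Therefore $W_{34}{\bf x}\geq\rho(W_{12}){\bf x}$, and Theorem \ref{2.1.2} delivers $\rho(W_{12})\leq\rho(W_{34})<1$, completing the proof.
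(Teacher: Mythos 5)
Your proposal is correct and follows exactly the route the paper intends: the corollary is the non-singular specialization of Theorem \ref{wcomp1}, whose proof the paper declares to be analogous to that of Theorem \ref{dcomp3}, and you reproduce that eigenvector/comparison argument faithfully. Your extra care over the cancellation $\widehat{P}_1^{\dag}\widehat{P}_1=\widehat{P}_2^{\dag}\widehat{P}_2$ (trivial in Theorem \ref{dcomp3} where both induced splittings share $P_3$, and trivially the identity in the non-singular setting of this corollary) is exactly the one point where the analogy needs justification, and your projector argument via $N(P_2)=N(A)=N(P_4)$ settles it.
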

% Similar results can be possible in case of HT-ADS scheme, which we have discuss in the next subsection.

\subsection{Comparison Results: HT-ADS scheme}
In this sub-section, we  establish that the HT-ADS scheme converges faster than the classical double spitting schemes. Further, we answer the natural question that under what condition the HT-ADS scheme performs better than the TG-ADS scheme and vice versa. The comparisons among $\rho({\mathcal{W}}_{12}), \rho({\mathcal{W}}_{23}) \mbox{ and } \rho({\mathcal{W}}_{34})$, like the TG-ADS scheme are omitted as they are very similar.

\begin{theorem}\label{wt2comp2}
Let $A = P_1-R_1+S_1$ be a double proper weak regular splitting and $A = P_2-R_2+S_2$ be a double proper  regular splitting of a semi-monotone matrix   $A\in{\mathbb{R}^{m\times{n}}}$. Suppose that $N(R_2) \supseteq N(P_2)$, $R(R_2) \subseteq R(P_2)$, $-1 \notin \sigma(R_2P_1^{\dag})$ and $\widehat{\mathcal{A}}^{\dag}\geq 0$. If $P_2^{\dag}R_2+P_2^{\dag}S_2\leq 0$, then $\rho({\mathcal{W}}_{12})\leq\rho(T_2)<1.$
\end{theorem}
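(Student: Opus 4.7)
The plan is to imitate the strategies of Theorems~\ref{altcomp121} and~\ref{altcomp}, but with $\rho(T_{2})$ in the role of the upper bound (rather than $\rho(T_{1})$). Since $A=P_{2}-R_{2}+S_{2}$ is a double proper regular splitting of a semi-monotone matrix, Theorem~\ref{2.1.1} gives $\rho(T_{2})<1$, while Theorem~\ref{wwcon} gives $\rho(\mathcal{W}_{12})<1$. The case $\rho(\mathcal{W}_{12})=0$ is immediate, so I focus on $\rho(\mathcal{W}_{12})>0$. Under the stated hypotheses $\mathcal{W}_{12}\geq 0$ blockwise, so Theorem~\ref{2.1.4} supplies a non-negative eigenvector ${\bf x}=\begin{pmatrix}x_{1}\\ x_{2}\end{pmatrix}$ with $\mathcal{W}_{12}{\bf x}=\rho(\mathcal{W}_{12}){\bf x}$, yielding $(P_{2}^{\dag}R_{2}P_{1}^{\dag}R_{1}-P_{2}^{\dag}S_{2})x_{1}-P_{2}^{\dag}R_{2}P_{1}^{\dag}S_{1}x_{2}=\rho(\mathcal{W}_{12})x_{1}$ and $x_{1}=\rho(\mathcal{W}_{12})x_{2}$. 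The aim is to verify $T_{2}{\bf x}-\rho(\mathcal{W}_{12}){\bf x}\geq 0$ so Theorem~\ref{2.1.2} produces $\rho(\mathcal{W}_{12})\leq\rho(T_{2})$.

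Substituting $x_{2}=x_{1}/\rho(\mathcal{W}_{12})$, the bottom block of $T_{2}{\bf x}-\rho(\mathcal{W}_{12}){\bf x}$ vanishes and the top block reduces to
\[
\Delta=P_{2}^{\dag}R_{2}x_{1}-\tfrac{1}{\rho(\mathcal{W}_{12})}P_{2}^{\dag}S_{2}x_{1}-\rho(\mathcal{W}_{12})x_{1}.
\]
Eliminating $\rho(\mathcal{W}_{12})x_{1}$ via the first eigenvector equation and using the identity $R_{2}=R_{2}P_{1}^{\dag}P_{1}$ (valid because $N(R_{2})\supseteq N(P_{2})=N(A)=N(P_{1})$) to rewrite $R_{2}(I-P_{1}^{\dag}R_{1})=R_{2}P_{1}^{\dag}(A-S_{1})$, $\Delta$ simplifies to
\[
\Delta=P_{2}^{\dag}R_{2}P_{1}^{\dag}A\,x_{1}+\bigl(\tfrac{1}{\rho(\mathcal{W}_{12})}-1\bigr)\bigl(P_{2}^{\dag}R_{2}P_{1}^{\dag}S_{1}-P_{2}^{\dag}S_{2}\bigr)x_{1}.
\]
The non-negativity $Ax_{1}\geq 0$ is obtained exactly as in Theorem~\ref{altcomp121}: pre-multiply the first eigenvector equation by $P_{2}$ (using $P_{2}P_{2}^{\dag}R_{2}=R_{2}$ and $P_{2}P_{2}^{\dag}S_{2}=S_{2}$ from $R(R_{2}),R(S_{2})\subseteq R(P_{2})$), replace $\rho(\mathcal{W}_{12})^{2}$ by the larger $\rho(\mathcal{W}_{12})$ on the non-negative right-hand side, and simplify with $P_{1}^{\dag}P_{1}=P_{2}^{\dag}P_{2}$ to reach $\widehat{\mathcal{A}}x_{1}=(I+R_{2}P_{1}^{\dag})Ax_{1}\geq 0$, whence $Ax_{1}\geq 0$.

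With $Ax_{1}\geq 0$, the first summand of $\Delta$ is non-negative. For the second, the coefficient $\rho(\mathcal{W}_{12})^{-1}-1>0$, and the hypothesis $P_{2}^{\dag}R_{2}+P_{2}^{\dag}S_{2}\leq 0$ enters decisively through $-P_{2}^{\dag}S_{2}\geq P_{2}^{\dag}R_{2}\geq 0$:
\[
(P_{2}^{\dag}R_{2}P_{1}^{\dag}S_{1}-P_{2}^{\dag}S_{2})x_{1}\geq P_{2}^{\dag}R_{2}(I+P_{1}^{\dag}S_{1})x_{1}=P_{2}^{\dag}R_{2}P_{1}^{\dag}(A+R_{1})x_{1}\geq 0,
\]
using once more $R_{2}=R_{2}P_{1}^{\dag}P_{1}$ together with $Ax_{1}\geq 0$ and the weak regularity $P_{1}^{\dag}R_{1}\geq 0$. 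Hence $\Delta\geq 0$, so $T_{2}{\bf x}-\rho(\mathcal{W}_{12}){\bf x}\geq 0$, and Theorem~\ref{2.1.2} yields $\rho(\mathcal{W}_{12})\leq\rho(T_{2})<1$. The main obstacle is precisely this last chain: the hypothesis is of an unusual flavour in that it bounds $P_{2}^{\dag}R_{2}$ by $-P_{2}^{\dag}S_{2}$ within the second splitting alone, and spotting the rewrite $-P_{2}^{\dag}S_{2}+P_{2}^{\dag}R_{2}P_{1}^{\dag}S_{1}\geq P_{2}^{\dag}R_{2}P_{1}^{\dag}(A+R_{1})$ is what compensates the wrong-sign contribution of the non-positive term $P_{2}^{\dag}R_{2}P_{1}^{\dag}S_{1}$.
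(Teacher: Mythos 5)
Your proposal is correct and follows essentially the same route as the paper's proof: both extract a non-negative eigenvector of $\bs{\mathcal{W}}_{12}$, establish $Ax_1\geq 0$ by pre-multiplying the eigenvalue relation by $P_2$ and simplifying to $\rho(\mathcal{W}_{12})\,\widehat{\mathcal{A}}x_1\geq 0$, and then show the first block of ${\bf T}_2{\bf x}-\rho(\mathcal{W}_{12}){\bf x}$ is non-negative using $R_2P_1^{\dag}P_1=R_2$, the sign of $\rho(\mathcal{W}_{12})^{-1}-1$, and the hypothesis $P_2^{\dag}R_2+P_2^{\dag}S_2\leq 0$, before invoking Theorem \ref{2.1.2}. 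The only difference is cosmetic: you write $\Delta$ as an exact identity and bound its two summands separately, while the paper runs a chain of inequalities terminating in $(1-\tfrac{1}{\rho(\mathcal{W}_{12})})(P_2^{\dag}R_2+P_2^{\dag}S_2)\geq 0$; the ingredients are identical.
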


\begin{proof}
By Theorem \ref{wwcon} and Theorem \ref{2.1.1}, we have $\rho({\mathcal{W}}_{12})<1$ and $\rho({T}_2)<1$, respectively. \\
Case (i):  $\rho({\mathcal{W}}_{12}) = 0$, the proof is trivial. \\
Case (ii):  $\rho({\mathcal{W}}_{12})\neq 0$, i.e., $0<\rho({\mathcal{W}}_{12})<1$.
Applying Theorem \ref{2.1.4} to ${\bf {\WW}_{12}}$, i.e., there exists a vector 
$${\bf x} = \begin{pmatrix}
    x_1 \\
    x_2
  \end{pmatrix} \geq 0,~ {\bf x}\neq 0$$
such that ${\bf {\WW}_{12}}{\bf x} = \rho({\mathcal{W}}_{12}){\bf x}$. This gives
\begin{eqnarray*}
(P_2^{\dag}R_2P_1^{\dag}R_1-P_2^{\dag}S_2)x_1-P_2^{\dag}R_2P_1^{\dag}S_1x_2 &=& \rho({\mathcal{W}}_{12})x_1\\
x_1 &=& \rho({\mathcal{W}}_{12})x_2.
\end{eqnarray*}
So, we have
\begin{eqnarray*}
{\bf T}_2{\bf x}-\rho({\mathcal{W}}_{12}){\bf x} &=& \begin{pmatrix}
    P_2^{\dag}R_2x_1-P_2^{\dag}S_2x_2-\rho({\mathcal{W}}_{12})x_1\\ 
    x_1-\rho({\mathcal{W}}_{12})x_2
    \end{pmatrix}.
\end{eqnarray*}
By suitable substitutions in the first component of the above expression, we have
\begin{eqnarray*}
&& P_2^{\dag}R_2x_1-\frac{1}{\rho({\mathcal{W}}_{12})}P_2^{\dag}S_2x_1-P_2^{\dag}R_2P_1^{\dag}R_1x_1+P_2^{\dag}S_2x_1+\frac{1}{\rho({\mathcal{W}}_{12})}P_2^{\dag}R_2P_1^{\dag}S_1x_1\\
&\geq& P_2^{\dag}R_2x_1+P_2^{\dag}S_2x_1-\frac{1}{\rho({\mathcal{W}}_{12})}P_2^{\dag}S_2x_1-\frac{1}{\rho({\mathcal{W}}_{12})}P_2^{\dag}R_2P_1^{\dag}R_1x_1+\frac{1}{\rho({\mathcal{W}}_{12})}P_2^{\dag}R_2P_1^{\dag}S_1x_1\\
&=& P_2^{\dag}R_2x_1+\left(1-\frac{1}{\rho({\mathcal{W}}_{12})}\right)P_2^{\dag}S_2x_1+\frac{1}{\rho({\mathcal{W}}_{12})}P_2^{\dag}R_2P_1^{\dag}(-R_1+S_1)x_1\\
&=& P_2^{\dag}R_2x_1+\left(1-\frac{1}{\rho({\mathcal{W}}_{12})}\right)P_2^{\dag}S_2x_1+\frac{1}{\rho({\mathcal{W}}_{12})}P_2^{\dag}R_2P_1^{\dag}(A-P_1)x_1\\
&=& P_2^{\dag}R_2x_1+\left(1-\frac{1}{\rho({\mathcal{W}}_{12})}\right)P_2^{\dag}S_2x_1+\frac{1}{\rho({\mathcal{W}}_{12})}P_2^{\dag}R_2P_1^{\dag}Ax_1-\frac{1}{\rho({\mathcal{W}}_{12})}P_2^{\dag}R_2x_1\\
&\geq& \left(1-\frac{1}{\rho({\mathcal{W}}_{12})}\right)P_2^{\dag}R_2x_1 +\left(1-\frac{1}{\rho({\mathcal{W}}_{12})}\right)P_2^{\dag}S_2x_1\\ &~&~~~~~~~~~~~~~~~~~~~~~~~~~~~~~~~~~~~~~~~~~(\because P_2^{\dag}R_2P_1^{\dag}Ax_1\geq 0~ \mbox{as}~ Ax_1\geq 0~\mbox{by Theorem \ref{dcomp}})\\
&=& (1-\frac{1}{\rho({\mathcal{W}}_{12})})(P_2^{\dag}R_2+P_2^{\dag}S_2)\geq 0.
\end{eqnarray*}
 Hence ${\bf T}_2{\bf x}-\rho({\mathcal{W}}_{12}){\bf x} \geq 0$.
 We thus have $\rho({\mathcal{W}}_{12})\leq \rho({T}_2)<1,$ by Theorem \ref{2.1.2}.
\end{proof}

\begin{theorem}\label{wt1comp2}
Let $A = P_1-R_1+S_1$ be a double proper weak regular splitting and $A = P_2-R_2+S_2$ be a double proper regular splitting of a semi-monotone matrix   $A\in{\mathbb{R}^{m\times{n}}}$. Suppose that $N(R_2) \supseteq N(P_2)$, $R(R_2) \subseteq R(P_2)$, $-1 \notin \sigma(R_2P_1^{\dag})$ and $\widehat{\mathcal{A}}^{\dag}\geq 0$. If  $P_1^{\dag}R_1\geq P_2^{\dag}R_2$, $P_2^{\dag}S_2\geq P_1^{\dag}S_1$ and  $P_2^{\dag}R_2+P_2^{\dag}S_2\leq 0$, then $\rho({\mathcal{W}}_{12})\leq\rho(T_1)<1.$
\end{theorem}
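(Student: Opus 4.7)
The plan is to derive the result as a short chain: the previous theorem (Theorem \ref{wt2comp2}) already gives $\rho(\mathcal{W}_{12}) \leq \rho(T_{2}) < 1$ under exactly the subset of hypotheses shared with the present statement (namely, the range/null-space conditions on $R_{2}$, the spectral condition $-1 \notin \sigma(R_{2}P_{1}^{\dag})$, $\widehat{\mathcal{A}}^{\dag}\geq 0$, and the pivotal inequality $P_{2}^{\dag}R_{2} + P_{2}^{\dag}S_{2} \leq 0$). It therefore suffices to establish the auxiliary comparison $\rho(T_{2}) \leq \rho(T_{1})$ from the two newly-added hypotheses $P_{1}^{\dag}R_{1} \geq P_{2}^{\dag}R_{2}$ and $P_{2}^{\dag}S_{2} \geq P_{1}^{\dag}S_{1}$; composing the two bounds immediately yields $\rho(\mathcal{W}_{12}) \leq \rho(T_{1}) < 1$.

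For the auxiliary step, I would first verify that both iteration matrices are non-negative. Since $A = P_{1} - R_{1} + S_{1}$ is double proper weak regular, $P_{1}^{\dag}R_{1} \geq 0$ and $-P_{1}^{\dag}S_{1} \geq 0$, so $T_{1} \geq 0$; since $A = P_{2} - R_{2} + S_{2}$ is double proper regular, $P_{2}^{\dag} \geq 0$, $R_{2} \geq 0$ and $S_{2} \leq 0$, hence $P_{2}^{\dag}R_{2} \geq 0$ and $-P_{2}^{\dag}S_{2} \geq 0$, giving $T_{2} \geq 0$. The two added hypotheses make the only nonzero blocks of $T_{1} - T_{2}$ nonnegative (note $P_{2}^{\dag}S_{2} \geq P_{1}^{\dag}S_{1}$ is the same as $-P_{1}^{\dag}S_{1} \geq -P_{2}^{\dag}S_{2}$), so $T_{1} \geq T_{2} \geq 0$.

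Now invoke Theorem \ref{2.1.4} to obtain a non-negative Perron eigenvector ${\bf y}$ of $T_{2}$ with $T_{2}{\bf y} = \rho(T_{2}){\bf y}$. Then $T_{1}{\bf y} \geq T_{2}{\bf y} = \rho(T_{2}){\bf y}$, and Theorem \ref{2.1.2} applied in the nontrivial case $\rho(T_{2}) > 0$ delivers $\rho(T_{2}) \leq \rho(T_{1})$ (the case $\rho(T_{2}) = 0$ is immediate from $\rho(T_{1}) \geq 0$). Chaining with Theorem \ref{wt2comp2} gives $\rho(\mathcal{W}_{12}) \leq \rho(T_{2}) \leq \rho(T_{1})$, and $\rho(T_{1}) < 1$ follows from Theorem \ref{2.1.1}, which applies because $A$ is semi-monotone and $A = P_{1} - R_{1} + S_{1}$ is double proper weak regular.

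I anticipate no serious obstacle; the only point requiring care is checking that every hypothesis of Theorem \ref{wt2comp2} appears verbatim in the present theorem so the quotation is legitimate. An alternative direct proof in the Perron-eigenvector-of-$\mathcal{W}_{12}$ style of Theorem \ref{wt2comp2} is of course available, but it would require juggling all three of the inequality hypotheses simultaneously inside a single algebraic manipulation; the chaining route above is cleaner and transparently isolates the role of each assumption, with $P_{2}^{\dag}R_{2} + P_{2}^{\dag}S_{2} \leq 0$ feeding Theorem \ref{wt2comp2} and the two monotonicity comparisons driving the ordering $\rho(T_{2}) \leq \rho(T_{1})$.
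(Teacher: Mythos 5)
Your proof is correct, but it takes a different route from the paper's. The paper argues directly: it takes a non-negative Perron eigenvector $\bf{x}$ of $\boldsymbol{\mathcal{W}}_{12}$ (via Theorem \ref{2.1.4}), substitutes the eigenvalue equations into ${\bf T}_1{\bf x}-\rho(\mathcal{W}_{12}){\bf x}$, and pushes through a chain of inequalities using all three hypotheses $P_1^{\dag}R_1\geq P_2^{\dag}R_2$, $P_2^{\dag}S_2\geq P_1^{\dag}S_1$, $P_2^{\dag}R_2+P_2^{\dag}S_2\leq 0$ together with $Ax_1\geq 0$, before invoking Theorem \ref{2.1.2}. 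You instead factor the argument through $\rho(T_2)$: Theorem \ref{wt2comp2} applies verbatim (its hypotheses are a subset of the present ones) and gives $\rho(\mathcal{W}_{12})\leq\rho(T_2)<1$, and the two remaining hypotheses yield the entrywise domination ${\bf T}_1\geq {\bf T}_2\geq 0$ (since $(T_1-T_2)_{11}=P_1^{\dag}R_1-P_2^{\dag}R_2\geq 0$ and $(T_1-T_2)_{12}=P_2^{\dag}S_2-P_1^{\dag}S_1\geq 0$), whence $\rho(T_2)\leq\rho(T_1)$ by the standard Perron--Frobenius comparison, which you correctly reconstruct from Theorems \ref{2.1.4} and \ref{2.1.2}. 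Your route is more modular and in fact proves something slightly stronger than stated: the full chain $\rho(\mathcal{W}_{12})\leq\rho(T_2)\leq\rho(T_1)<1$, which shows that under these hypotheses the minimum in Theorem \ref{wt12comp2} is always attained at $\rho(T_2)$, so that theorem follows at once with no additional work. What the paper's self-contained computation buys is independence from Theorem \ref{wt2comp2} and uniformity with the proof template used throughout Section 3; what your chaining buys is brevity, a clean separation of the role of each hypothesis, and the sharper two-sided conclusion.
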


\begin{proof}
By Theorem \ref{wwcon} and Theorem \ref{2.1.1}, we get $\rho({\mathcal{W}}_{12})<1$ and $\rho({T}_1)<1$, respectively.\\
Case (i):  $\rho({\mathcal{W}}_{12}) = 0$, the proof is trivial.\\
Case (ii): $\rho({\mathcal{W}}_{12})\neq 0$, i.e., $0<\rho({\mathcal{W}}_{12})<1$.
Applying Theorem \ref{2.1.4} to ${\bf {\WW}_{12}}$, i.e., there exists a vector 
$${\bf x} = \begin{pmatrix}
    x_1 \\
    x_2
  \end{pmatrix} \geq 0,~ {\bf x}\neq 0$$
such that ${\bf {\WW}_{12}}{\bf x} = \rho({\mathcal{W}}_{12}){\bf x}$. This gives
\begin{eqnarray*}
(P_2^{\dag}R_2P_1^{\dag}R_1-P_2^{\dag}S_2)x_1-P_2^{\dag}R_2P_1^{\dag}S_1x_2 &=& \rho({\mathcal{W}}_{12})x_1\\
x_1 &=& \rho({\mathcal{W}}_{12})x_2.
\end{eqnarray*}
Now 
\begin{eqnarray*}
{\bf T}_1{\bf x}-\rho({\mathcal{W}}_{12}){\bf x} &=& \begin{pmatrix}
    P_1^{\dag}R_1x_1-P_1^{\dag}S_1x_2-\rho({\mathcal{W}}_{12})x_1\\ 
    x_1-\rho({\mathcal{W}}_{12})x_2
    \end{pmatrix}.
\end{eqnarray*}
By suitable substitutions in the first component of the above expression, we have
\begin{eqnarray*}
%  & & P_1^{\dag}R_1x_1-P_1^{\dag}S_1x_2-\rho({\mathcal{W}}_{12})x_1\\
&& P_1^{\dag}R_1x_1-\frac{1}{\rho({\mathcal{W}}_{12})}P_1^{\dag}S_1x_1-P_2^{\dag}R_2P_1^{\dag}R_1x_1+P_2^{\dag}S_2x_1+\frac{1}{\rho({\mathcal{W}}_{12})}P_2^{\dag}R_2P_1^{\dag}S_1x_1\\
&\geq& P_2^{\dag}R_2x_1-\frac{1}{\rho({\mathcal{W}}_{12})}P_1^{\dag}S_1x_1-\frac{1}{\rho({\mathcal{W}}_{12})}P_2^{\dag}R_2P_1^{\dag}R_1x_1+\frac{1}{\rho({\mathcal{W}}_{12})}P_2^{\dag}R_2P_1^{\dag}S_1x_1+P_2^{\dag}S_2x_1\\
&\geq& P_2^{\dag}R_2x_1+\left(1-\frac{1}{\rho({\mathcal{W}}_{12})}\right)P_2^{\dag}S_2x_1+\frac{1}{\rho({\mathcal{W}}_{12})}P_2^{\dag}R_2P_1^{\dag}(-R_1+S_1)x_1\\
&=& P_2^{\dag}R_2x_1+\left(1-\frac{1}{\rho({\mathcal{W}}_{12})}\right)P_2^{\dag}S_2x_1+\frac{1}{\rho({\mathcal{W}}_{12})}P_2^{\dag}R_2P_1^{\dag}(A-P_1)x_1\\
&=& P_2^{\dag}R_2x_1+\left(1-\frac{1}{\rho({\mathcal{W}}_{12})}\right)P_2^{\dag}S_2x_1+\frac{1}{\rho({\mathcal{W}}_{12})}P_2^{\dag}R_2P_1^{\dag}Ax_1-\frac{1}{\rho({\mathcal{W}}_{12})}P_2^{\dag}R_2x_1\\
&\geq& \left(1-\frac{1}{\rho({\mathcal{W}}_{12})}\right)P_2^{\dag}R_2x_1 +\left(1-\frac{1}{\rho({\mathcal{W}}_{12})}\right)P_2^{\dag}S_2x_1 ~~~(\because P_2^{\dag}R_2P_1^{\dag}Ax_1\geq 0)\\
&=& \left(1-\frac{1}{\rho({\mathcal{W}}_{12})}\right)(P_2^{\dag}R_2+P_2^{\dag}S_2)\geq 0.
\end{eqnarray*}
 Hence ${\bf T}_1{\bf x}-\rho({\mathcal{W}}_{12}){\bf x} \geq 0$. By Theorem \ref{2.1.2}, we have $\rho({\mathcal{W}}_{12})\leq \rho({T}_1)<1.$
\end{proof}

Combining the above two results, we have the following one.
\begin{theorem}\label{wt12comp2}
Let $A = P_1-R_1+S_1$ be a double proper weak regular splitting and $A = P_2-R_2+S_2$ be a double proper regular splitting of a semi-monotone matrix   $A\in{\mathbb{R}^{m\times{n}}}$. Suppose that $N(R_2) \supseteq N(P_2)$, $R(R_2) \subseteq R(P_2)$, $-1 \notin \sigma(R_2P_1^{\dag})$ and $\widehat{\mathcal{A}}^{\dag}\geq 0$. If  $P_1^{\dag}R_1\geq P_2^{\dag}R_2$, $P_2^{\dag}S_2\geq P_1^{\dag}S_1$ and  $P_2^{\dag}R_2+P_2^{\dag}S_2\leq 0$, then $\rho({\mathcal{W}}_{12})\leq min\{\rho(T_1), \rho(T_2)\} <1.$
\end{theorem}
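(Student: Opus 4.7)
The statement is manifestly a consequence of the two immediately preceding theorems, as the surrounding prose (``Combining the above two results'') makes explicit. My plan is therefore to simply verify that the hypotheses assumed here are precisely the union of the hypotheses required by Theorem \ref{wt2comp2} and Theorem \ref{wt1comp2}, and then invoke both.

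First, I would record the standing structural assumptions: $A = P_1-R_1+S_1$ is a double proper weak regular splitting, $A = P_2-R_2+S_2$ is a double proper regular splitting of a semi-monotone matrix $A$, together with the range/null-space conditions $N(R_2)\supseteq N(P_2)$ and $R(R_2)\subseteq R(P_2)$, the spectral condition $-1\notin\sigma(R_2P_1^{\dag})$, and the non-negativity condition $\widehat{\mathcal{A}}^{\dag}\geq 0$. These are exactly the conditions used to construct the preconditioned splitting $\widehat{\mathcal{A}} = \widehat{\mathcal{P}}-\widehat{\mathcal{R}}+\widehat{\mathcal{S}}$ in subsection \ref{HT:cgs} and to apply Theorem \ref{wwcon}, which already gives $\rho(\mathcal{W}_{12})<1$.

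Next, I would observe that the additional hypothesis $P_2^{\dag}R_2+P_2^{\dag}S_2\leq 0$ is exactly the extra ingredient required by Theorem \ref{wt2comp2}; applying that theorem directly yields $\rho(\mathcal{W}_{12})\leq \rho(T_2)<1$. Analogously, the three hypotheses $P_1^{\dag}R_1\geq P_2^{\dag}R_2$, $P_2^{\dag}S_2\geq P_1^{\dag}S_1$, and $P_2^{\dag}R_2+P_2^{\dag}S_2\leq 0$ together match the hypotheses of Theorem \ref{wt1comp2}, yielding $\rho(\mathcal{W}_{12})\leq \rho(T_1)<1$. Taking the minimum of the two upper bounds then gives
\[
\rho(\mathcal{W}_{12}) \leq \min\{\rho(T_1),\rho(T_2)\} < 1,
\]
as required.

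There is no genuine obstacle here: the result is a packaging statement, not a new argument. The only care needed is to confirm that no hypothesis of either earlier theorem has been accidentally dropped (in particular, that both theorems share the same structural assumptions about the two splittings and the preconditioned splitting $\widehat{\mathcal{A}}$), which a quick side-by-side check confirms. Thus the proof proposal is essentially a one-paragraph citation combining Theorem \ref{wt2comp2} and Theorem \ref{wt1comp2}.
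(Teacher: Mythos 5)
Your proposal is correct and is exactly what the paper does: the theorem is stated immediately after the sentence ``Combining the above two results, we have the following one,'' with no separate proof, and its hypotheses are precisely the union of those of Theorem \ref{wt2comp2} and Theorem \ref{wt1comp2}, so the two bounds $\rho(\mathcal{W}_{12})\leq\rho(T_2)$ and $\rho(\mathcal{W}_{12})\leq\rho(T_1)$ combine to give the stated minimum.
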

In the case of non-singular $A$, we obtain the following result as a corollary. 
\begin{corollary}\label{wt12comp2cor}
Let $A = P_1-R_1+S_1$ be a double  weak regular splitting and $A = P_2-R_2+S_2$ be a double regular splitting of a monotone matrix   $A$. Suppose that $-1 \notin \sigma(R_2P_1^{-1})$ and $\widehat{\mathcal{A}}^{-1}\geq 0$. If  $P_1^{-1}R_1\geq P_2^{-1}R_2$, $P_2^{-1}S_2\geq P_1^{-1}S_1$ and  $P_2^{-1}R_2+P_2^{-1}S_2\leq 0$, then $\rho({\mathcal{W}}_{12})\leq min\{\rho(T_1), \rho(T_2)\} <1.$
\end{corollary}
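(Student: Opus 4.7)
The plan is to derive this corollary as a direct specialization of Theorem \ref{wt12comp2} to the non-singular setting, observing that every hypothesis of the theorem is either explicitly assumed in the corollary or automatically satisfied when $A$ is a non-singular monotone matrix.

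First I would note that a monotone matrix $A \in \mathbb{R}^{n\times n}$ is by definition invertible with $A^{-1}\geq 0$, so in particular $A^{\dag} = A^{-1}\geq 0$ and $A$ is semi-monotone in the sense of the theorem. Since $A = P_1 - R_1 + S_1$ is a double proper splitting, we have $R(P_1) = R(A) = \mathbb{R}^n$ and $N(P_1) = N(A) = \{0\}$, so $P_1$ is non-singular and $P_1^{\dag} = P_1^{-1}$; the same argument applies to $P_2$. Thus the double weak regular splitting reduces to its classical form $P_1^{-1}\geq 0$, $P_1^{-1}R_1\geq 0$, $P_1^{-1}S_1\leq 0$, and the double regular splitting reduces to $P_2^{-1}\geq 0$, $R_2\geq 0$, $S_2\leq 0$, matching precisely the hypothesis of Theorem \ref{wt12comp2}.

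Next, I would verify that the range and null-space conditions $R(R_2)\subseteq R(P_2)$ and $N(R_2)\supseteq N(P_2)$ required in Theorem \ref{wt12comp2} hold trivially, because $R(P_2) = \mathbb{R}^n$ and $N(P_2) = \{0\}$. The condition $-1\notin \sigma(R_2 P_1^{-1})$ is identical to $-1\notin \sigma(R_2 P_1^{\dag})$, and $\widehat{\mathcal{A}}^{-1}\geq 0$ is identical to $\widehat{\mathcal{A}}^{\dag}\geq 0$ in this setting, where $\widehat{\mathcal{A}} = (I + R_2 P_1^{-1})A$. Finally, the comparison hypotheses $P_1^{-1}R_1\geq P_2^{-1}R_2$, $P_2^{-1}S_2\geq P_1^{-1}S_1$, and $P_2^{-1}R_2 + P_2^{-1}S_2\leq 0$ are stated identically in both results (with $\dag$ replaced by $-1$).

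With all these hypotheses matched, Theorem \ref{wt12comp2} applies and yields $\rho(\mathcal{W}_{12})\leq \min\{\rho(T_1),\rho(T_2)\}<1$, which is the conclusion of the corollary. No step poses any real obstacle: the entire content is the observation that when $A$ is non-singular monotone and both $P_i$ inherit the full range and trivial null space from $A$, the Moore-Penrose pseudoinverse and the range/null-space side conditions collapse to their non-singular counterparts, and the more general rectangular theorem specializes immediately.
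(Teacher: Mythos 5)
Your proposal is correct and matches the paper exactly: the paper states this result as an immediate corollary of Theorem \ref{wt12comp2} without further proof, and your verification that monotonicity gives semi-monotonicity, that $P_1,P_2$ are invertible so $P_i^{\dag}=P_i^{-1}$, and that the range/null-space conditions on $R_2$ hold trivially is precisely the intended specialization. The only cosmetic remark is that invertibility of $P_i$ follows most directly from the condition $P_i^{-1}\geq 0$ built into the non-singular definitions of double (weak) regular splitting, rather than from an assumed properness, but this does not affect the argument.
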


In support of Theorem \ref{wt12comp2}, the following example is illustrated.

\begin{example}\label{exwt123}
Suppose $ A = \begin{bmatrix}
1 & 0 & 0 \\
0 & 0 & 1 
\end{bmatrix} = P_1-R_1+S_1 = P_2-R_2+S_2$   

\begin{align*}
=
\begin{bmatrix}
5 & 0 & 0 \\
0 & 0 & 5
\end{bmatrix}
-
\begin{bmatrix}
2 & 0 & 0 \\
0 & 0 & 2
\end{bmatrix}
+
\begin{bmatrix}
-2 & 0 & 0 \\
0 & 0 & -2
\end{bmatrix}\\
=\begin{bmatrix}
3 & 0 & 0 \\
0 & 0 & 3
\end{bmatrix}-\begin{bmatrix}
1 & 0 & 0 \\
0 & 0 & 1
\end{bmatrix}
+
\begin{bmatrix}
-1 & 0 & 0 \\
0 & 0 & -1
\end{bmatrix}
\end{align*}
are two double proper splittings of $A$ such that the first one is double proper weak regular splitting and the second one is double proper regular splitting. Also, it satisfy all conditions of Theorem \ref{wt12comp2}. Therefore,  $ \rho({\mathcal{W}}_{12}) = 0.6667 \leq min\{\rho(T_1) = 0.8633, \rho(T_2) = 0.7675\} <1.$
\end{example}

\begin{theorem}\label{wwcomp2}
Let $A = P_1-R_1+S_1$ be a double proper weak regular  splitting
and $A = P_2-R_2+S_2$ be a double proper regular splitting of a semi-monotone matrix $A\in{\mathbb{R}^{m\times{n}}}$. Suppose that  $N(S_2) = N(P_2)$, $R(S_2) = R(P_2)$, $1 \notin \sigma(S_2P_1^{\dag})$, $-1 \notin \sigma(R_2P_1^{\dag})$, $\widehat{A}^{\dag}\geq 0$ and $\widehat{\mathcal{A}}^{\dag}\geq 0$. If $\widehat{\mathcal{P}}^{\dag}\widehat{\mathcal{R}}\geq \widehat{P}^{\dag}\widehat{R}$ and $ \widehat{\mathcal{P}}^{\dag}\widehat{\mathcal{A}}\geq \widehat{P}^{\dag}\widehat{A}$, then $\rho({\mathcal{W}}_{12})\leq\rho({W}_{12})<1.$
\end{theorem}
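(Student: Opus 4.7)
The plan is to bound $\rho(\mathcal{W}_{12})$ by $\rho(W_{12})$ using the Perron-type comparison lemma (Theorem \ref{2.1.2}), following the template of Theorems \ref{altcomp121} and \ref{dcomp3}. First I would observe that both $\rho(W_{12})<1$ and $\rho(\mathcal{W}_{12})<1$ are already guaranteed: the symmetric assumptions $N(S_2)=N(P_2)$ and $R(S_2)=R(P_2)$ contain both one-sided inclusions needed in the subsection \ref{sec:cgs}, so Theorems \ref{conv2} and \ref{wwcon} apply to the preconditioned splittings $\widehat{A}=\widehat{P}-\widehat{R}+\widehat{S}$ and $\widehat{\mathcal{A}}=\widehat{\mathcal{P}}-\widehat{\mathcal{R}}+\widehat{\mathcal{S}}$, respectively. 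The same two induced splittings furnish $\widehat{P}^{\dag}\widehat{R},\widehat{\mathcal{P}}^{\dag}\widehat{\mathcal{R}}\ge 0$ and $\widehat{P}^{\dag}\widehat{S},\widehat{\mathcal{P}}^{\dag}\widehat{\mathcal{S}}\le 0$, so both block matrices $W_{12}$ and $\mathcal{W}_{12}$ are non-negative.

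After dispatching the trivial case $\rho(\mathcal{W}_{12})=0$, I would invoke Theorem \ref{2.1.4} on $\mathcal{W}_{12}\ge 0$ to produce a non-negative eigenvector ${\bf x}=(x_1,x_2)^{t}\ne 0$ with $\mathcal{W}_{12}{\bf x}=\rho(\mathcal{W}_{12}){\bf x}$, equivalently
\begin{equation*}
\widehat{\mathcal{P}}^{\dag}\widehat{\mathcal{R}}x_1-\widehat{\mathcal{P}}^{\dag}\widehat{\mathcal{S}}x_2=\rho(\mathcal{W}_{12})x_1,\qquad x_1=\rho(\mathcal{W}_{12})x_2.
\end{equation*}
The goal is then to verify $W_{12}{\bf x}-\rho(\mathcal{W}_{12}){\bf x}\ge 0$, so that Theorem \ref{2.1.2} yields $\rho(\mathcal{W}_{12})\le\rho(W_{12})$. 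The second block component of $W_{12}{\bf x}-\rho(\mathcal{W}_{12}){\bf x}$ vanishes by the second eigenvector relation, and the first simplifies (after eliminating $\rho(\mathcal{W}_{12})x_1$ via the first relation and replacing $x_2$ via the second) to
\begin{equation*}
(\widehat{P}^{\dag}\widehat{R}-\widehat{\mathcal{P}}^{\dag}\widehat{\mathcal{R}})x_1+\frac{1}{\rho(\mathcal{W}_{12})}(\widehat{\mathcal{P}}^{\dag}\widehat{\mathcal{S}}-\widehat{P}^{\dag}\widehat{S})x_1.
\end{equation*}

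The decisive algebraic step is the identity
\begin{equation*}
\widehat{\mathcal{P}}^{\dag}\widehat{\mathcal{A}}-\widehat{P}^{\dag}\widehat{A}=-(\widehat{\mathcal{P}}^{\dag}\widehat{\mathcal{R}}-\widehat{P}^{\dag}\widehat{R})+(\widehat{\mathcal{P}}^{\dag}\widehat{\mathcal{S}}-\widehat{P}^{\dag}\widehat{S}),
\end{equation*}
obtained from $\widehat{\mathcal{P}}^{\dag}\widehat{\mathcal{P}}=\widehat{P}^{\dag}\widehat{P}=P_2^{\dag}P_2$. Combined with the hypotheses $\widehat{\mathcal{P}}^{\dag}\widehat{\mathcal{R}}\ge \widehat{P}^{\dag}\widehat{R}$ and $\widehat{\mathcal{P}}^{\dag}\widehat{\mathcal{A}}\ge \widehat{P}^{\dag}\widehat{A}$, this identity forces $\widehat{\mathcal{P}}^{\dag}\widehat{\mathcal{S}}-\widehat{P}^{\dag}\widehat{S}\ge \widehat{\mathcal{P}}^{\dag}\widehat{\mathcal{R}}-\widehat{P}^{\dag}\widehat{R}\ge 0$. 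Since $\rho(\mathcal{W}_{12})<1$ gives $1/\rho(\mathcal{W}_{12})>1$, the coefficient of the non-negative $\widehat{\mathcal{S}}$-summand may be bounded below by $1$, and the displayed expression is at least $(\widehat{\mathcal{P}}^{\dag}\widehat{\mathcal{A}}-\widehat{P}^{\dag}\widehat{A})x_1\ge 0$. Hence $W_{12}{\bf x}\ge \rho(\mathcal{W}_{12}){\bf x}$, and Theorem \ref{2.1.2} delivers $\rho(\mathcal{W}_{12})\le\rho(W_{12})<1$.

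The main obstacle is spotting the identity connecting $\widehat{\mathcal{P}}^{\dag}\widehat{\mathcal{A}}-\widehat{P}^{\dag}\widehat{A}$ with the two matrix differences that actually appear in $W_{12}{\bf x}-\rho(\mathcal{W}_{12}){\bf x}$; without it, the two hypotheses appear unrelated and the ``wrong-signed'' contribution $(\widehat{P}^{\dag}\widehat{R}-\widehat{\mathcal{P}}^{\dag}\widehat{\mathcal{R}})x_1\le 0$ looks fatal. The amplification $1/\rho(\mathcal{W}_{12})>1$ is precisely what converts the hypothesis $\widehat{\mathcal{P}}^{\dag}\widehat{\mathcal{A}}\ge \widehat{P}^{\dag}\widehat{A}$ into enough excess positivity on the $\widehat{\mathcal{S}}$-term to absorb that negative contribution.
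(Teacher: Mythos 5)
Your proposal is correct and follows essentially the same route as the paper's proof: a non-negative Perron eigenvector of $\bs{\mathcal{W}}_{12}$, reduction of the first block component of ${\bf W}_{12}{\bf x}-\rho(\mathcal{W}_{12}){\bf x}$ to the two differences $\widehat{P}^{\dag}\widehat{R}-\widehat{\mathcal{P}}^{\dag}\widehat{\mathcal{R}}$ and $\widehat{\mathcal{P}}^{\dag}\widehat{\mathcal{S}}-\widehat{P}^{\dag}\widehat{S}$, and the identity $\widehat{\mathcal{P}}^{\dag}\widehat{\mathcal{P}}=\widehat{P}^{\dag}\widehat{P}=P_2^{\dag}P_2$ to convert everything into $\widehat{\mathcal{P}}^{\dag}\widehat{\mathcal{A}}-\widehat{P}^{\dag}\widehat{A}\geq 0$. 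The only cosmetic difference is that the paper amplifies the non-positive $\widehat{R}$-difference term by $1/\rho(\mathcal{W}_{12})>1$ to equalize coefficients, whereas you shrink the coefficient of the non-negative $\widehat{S}$-difference term down to $1$; both yield the same conclusion via Theorem \ref{2.1.2}.
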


\begin{proof}
As in the earlier proof, we get  $\widehat{A} = \widehat{P}-\widehat{R}+\widehat{S}$ and $\widehat{\mathcal{A}} = \widehat{\mathcal{P}}-\widehat{\mathcal{R}}+\widehat{\mathcal{S}}$ as double proper  weak regular splittings.
By Theorem \ref{conv2} and Theorem \ref{wwcon}, we have  $\rho(W_{12})<1$ and $\rho({\mathcal{W}}_{12})<1,$ respectively.\\
Case (i): $\rho({\mathcal{W}}_{12}) = 0$.  The proof is obvious.\\ 
Case (ii): $\rho({\mathcal{W}}_{12}) \neq 0$. Since ${\bf {\bf \WW}_{12}}\geq 0$,  there exists a non-negative eigenvector
${\bf x} = \begin{pmatrix}
    x_1 \\
    x_2
  \end{pmatrix} $
such that ${\bf {\bf \WW}_{12}}{\bf x} = \rho({\mathcal{W}}_{12}){\bf x}$ by Theorem \ref{2.1.4}, i.e.,
\begin{align*}
 \widehat{\mathcal{P}}^{\dag}\widehat{\mathcal{R}}x_1-\widehat{\mathcal{P}}^{\dag}\widehat{\mathcal{S}}x_2 = \rho({\mathcal{W}}_{12})x_1\\  
  x_1 = \rho({\mathcal{W}}_{12})x_2. 
\end{align*}
Now
\begin{eqnarray*}
{\bf W_{12}}{\bf x}-\rho({\mathcal{W}}_{12}){\bf x} &=& \begin{pmatrix}
    \widehat{P}^{\dag}\widehat{R} x_1-\widehat{P}^{\dag}\widehat{S}x_2-\rho({\mathcal{W}}_{12})x_1\\ 
    x_1-\rho({\mathcal{W}}_{12})x_2
    \end{pmatrix}\\  
&=& \begin{pmatrix}
    \widehat{P}^{\dag}\widehat{R}x_1-\dfrac{1}{\rho({\mathcal{W}}_{12})}\widehat{P}^{\dag}\widehat{S}x_1-\widehat{\mathcal{P}}^{\dag}\widehat{\mathcal{R}}x_1+\dfrac{1}{\rho({\mathcal{W}}_{12})}\widehat{\mathcal{P}}^{\dag}\widehat{\mathcal{S}}x_1\\
    0
    \end{pmatrix}\\
    & \geq & \begin{pmatrix}
    \dfrac{1}{\rho({\mathcal{W}}_{12})}(\widehat{P}^{\dag}\widehat{R}-\widehat{\mathcal{P}}^{\dag}\widehat{\mathcal{R}})x_1-\dfrac{1}{\rho({\mathcal{W}}_{12})}(\widehat{P}^{\dag}\widehat{S}-\widehat{\mathcal{P}}^{\dag}\widehat{\mathcal{S}})x_1\\ 
    0
    \end{pmatrix}\\
    &=& \begin{pmatrix}
   \dfrac{1}{\rho({\mathcal{W}}_{12})}\left(\widehat{P}^{\dag}(\widehat{P}-\widehat{A})-\widehat{\mathcal{P}}^{\dag}(\widehat{\mathcal{P}}-\widehat{\mathcal{A}})\right)x_1 \\ 
    0
    \end{pmatrix}\\
    &=& \begin{pmatrix}
    \dfrac{1}{\rho({\mathcal{W}}_{12})}(\widehat{\mathcal{P}}^{\dag}\widehat{\mathcal{A}}-\widehat{P}^{\dag}\widehat{A})x_1 \\ 
    0
    \end{pmatrix}\geq 0.
\end{eqnarray*}
Hence $\rho({\mathcal{W}}_{12}){\bf x}\leq {\bf W_{12}}{\bf x}$. By Theorem 
\ref{2.1.2}, we therefore have $\rho({\mathcal{W}}_{12})\leq\rho({W}_{12})<1.$
\end{proof}

\begin{remark}\label{remGTHTcomp2}
If the conditions are again replaced by $\widehat{P}^{\dag}\widehat{R}\geq \widehat{\mathcal{P}}^{\dag}\widehat{\mathcal{R}}$ and $\widehat{P}^{\dag}\widehat{A}\geq \widehat{\mathcal{P}}^{\dag}\widehat{\mathcal{A}}$, then the HT-ADS scheme performs better than the GT-ADS scheme.
\end{remark}

As a consequence of Theorem \ref{wwcomp2}, we have the the following corollary. 
\begin{corollary}
Let $A = P_1-R_1+S_1$ be a double weak regular  splitting
and $A = P_2-R_2+S_2$ be a double regular splitting of a monotone matrix $A$. Suppose that  $1 \notin \sigma(S_2P_1^{-1})$, $-1 \notin \sigma(R_2P_1^{-1})$, $\widehat{A}^{-1}\geq 0$ and $\widehat{\mathcal{A}}^{-1}\geq 0$. If $\widehat{\mathcal{P}}^{-1}\widehat{\mathcal{R}}\geq \widehat{P}^{-1}\widehat{R}$ and $ \widehat{\mathcal{P}}^{-1}\widehat{\mathcal{A}}\geq \widehat{P}^{-1}\widehat{A}$, then $\rho({\mathcal{W}}_{12})\leq\rho({W}_{12})<1.$
\end{corollary}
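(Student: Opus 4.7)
The plan is to deduce this corollary as the non-singular specialization of Theorem~\ref{wwcomp2}. The key simplifications are: since $A$ is monotone it is non-singular, and since each $A = P_i - R_i + S_i$ is a double proper splitting, the properness conditions $R(P_i) = R(A) = \mathbb{R}^n$ and $N(P_i) = N(A) = \{0\}$ force $P_i$ itself to be non-singular, so $A^{\dag} = A^{-1}$ and $P_i^{\dag} = P_i^{-1}$ throughout. Consequently the preconditioned matrices $\widehat{A} = (I - S_2 P_1^{-1}) A$ and $\widehat{\mathcal{A}} = (I + R_2 P_1^{-1}) A$ are themselves non-singular under the spectral hypotheses $1 \notin \sigma(S_2 P_1^{-1})$ and $-1 \notin \sigma(R_2 P_1^{-1})$, so $\widehat{A}^{\dag} = \widehat{A}^{-1}$ and $\widehat{\mathcal{A}}^{\dag} = \widehat{\mathcal{A}}^{-1}$, matching the hypotheses $\widehat{A}^{-1} \geq 0$ and $\widehat{\mathcal{A}}^{-1} \geq 0$ of the corollary to those of the parent theorem.

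Next I would note that the auxiliary null-space and range hypotheses appearing in Theorem~\ref{wwcomp2} (such as $N(S_2) \supseteq N(P_2)$, $R(S_2) \subseteq R(P_2)$, and the analogous ones on $R_2$ needed for Theorem~\ref{wwcon}) hold vacuously in the non-singular setting, since $N(P_2) = \{0\}$ is contained in every null space and $R(P_2) = \mathbb{R}^n$ contains every range. Thus the hypotheses of the corollary translate precisely into those of Theorem~\ref{wwcomp2}, and the conclusion $\rho(\mathcal{W}_{12}) \leq \rho(W_{12}) < 1$ follows directly.

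The only mildly subtle point---and what I would spend the most care on---is that Theorem~\ref{wwcomp2} is stated with the equalities $N(S_2) = N(P_2)$ and $R(S_2) = R(P_2)$ rather than with inclusions, whereas the corollary imposes no such condition. Inspecting the proof template shows that only the inclusions are actually required: they enable the invocations of Theorem~\ref{conv2} and Theorem~\ref{wwcon} and secure the identity $\widehat{P}^{-1} \widehat{P} = \widehat{\mathcal{P}}^{-1} \widehat{\mathcal{P}}$ used in the key algebraic simplification $\widehat{P}^{-1}(\widehat{R} - \widehat{S}) - \widehat{\mathcal{P}}^{-1}(\widehat{\mathcal{R}} - \widehat{\mathcal{S}}) = \widehat{\mathcal{P}}^{-1} \widehat{\mathcal{A}} - \widehat{P}^{-1} \widehat{A}$. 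In the non-singular case these inclusions are automatic and the identity is trivial since $\widehat{P} = \widehat{\mathcal{P}} = P_2$, after which the Perron--Frobenius comparison from Theorem~\ref{wwcomp2}---extract a non-negative eigenvector of $\mathcal{W}_{12} \geq 0$ via Theorem~\ref{2.1.4}, manipulate $W_{12} \mathbf{x} - \rho(\mathcal{W}_{12}) \mathbf{x}$ using the two comparison hypotheses $\widehat{\mathcal{P}}^{-1} \widehat{\mathcal{R}} \geq \widehat{P}^{-1} \widehat{R}$ and $\widehat{\mathcal{P}}^{-1} \widehat{\mathcal{A}} \geq \widehat{P}^{-1} \widehat{A}$, and conclude by Theorem~\ref{2.1.2}---goes through verbatim with every $\dag$ replaced by $-1$.
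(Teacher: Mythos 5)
Your proposal is correct and matches the paper's intent exactly: the corollary is stated there as an immediate consequence of Theorem~\ref{wwcomp2}, obtained by replacing every Moore--Penrose inverse with an ordinary inverse and observing that the range and null-space hypotheses become automatic for a non-singular $A$. Your extra care about Theorem~\ref{wwcomp2} being stated with equalities $N(S_2)=N(P_2)$, $R(S_2)=R(P_2)$ (which in the non-singular setting would force $S_2$ to be invertible, a condition the corollary does not impose) is a legitimate and correctly resolved point that the paper glosses over: only the inclusions are used in the proof, and those hold trivially when $N(P_2)=\{0\}$ and $R(P_2)=\mathbb{R}^n$.
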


\section{Numerical results}

In this section, 
numerical results are given to demonstrate the accuracy and effectiveness of the proposed ADS schemes.  The computations are carried out using Mathematica 10.0 and MATLAB R2018a on an intel(R) Core(TM)i5, 2.5GHz, 16GB RAM. The stopping criteria is  $\|x_k-x_{k-1}\|\leq \epsilon = 10^{-7}$. We have considered two different examples: one  for the case of non-singular matrices and the other for rectangular matrices.

\begin{example}[Example $4.1$, \cite{msx}]\label{pde} Applying second order five-point central difference scheme for the following two-dimensional convection-diffusion equation:
\begin{equation*}
    -\frac{\partial^2 u}{\partial^2 x}-\frac{\partial^2 u}{\partial^2 y}+\frac{\partial u}{\partial x}+2\frac{\partial u}{\partial y}=\sin x,~(x,y)\in \Omega=[0,1]\times [0,1],
\end{equation*}
 we obtain a system of linear equations $Ax=b$, where $A$ is non-singular matrix. The discretization is made using uniform grids with $N_x\times N_y$ interior nodes, where the solution is known at the boundary. Therefore, the coefficient matrix $A$ is of the form $$
 A=I_y\kronecker J_x +J_y\kronecker I_x.$$
Here $\kronecker$ is the  Kronecker product, and the matrices $J_x$ and $J_y$ are tridiagonal matrices of order $N_x$ and $N_y$ respectively, i.e.,
\begin{equation*}
    J_x = tridiagonal\left(-2-h_x, 8 ,h_x-2\right) \mbox{ and } J_y = tridiagonal\left(-2-2h_y, 0 ,2h_y-2 \right),
\end{equation*}
where $h_x$ and $h_y$ are the uniform step size along $x$ and $y$ directions, respectively. Similarly, the identity matrices $I_x$ and $I_y$ are of the dimension $N_x$ and $N_y$, respectively. We can observe $A$ is not a symmetric matrix but its diagonally dominant block tridiagonal matrix hence irreducible. This properties of matrices implies they are monotonic, which is very useful while investigating our theoretical findings by numerical experiments.
The proposed TG-ADS scheme is compared with the iterative methods  of \cite{linw}, \cite{maxi}, \cite{shn},  \cite{zhwy} and \cite{zhwe}. The  Table \ref{tab:tablepde} compares  the residual norm ($\|r_k\|=\|b-Ax_{k}\|$),   error norm ($\|e_k\|=\|A^{\dag}b-x_k\|$) and Mean Time(MT).  The symbol $(-)$ represents that the TG-ADS scheme does not converge within the maximum allowed iteration (4000). 
Figure \ref{fig:fig1} presents the computational time of the present ADS scheme which outperforms the iteration schemes used in Table 1. The same figure shows that the computational time for the increasing  size of the discretization matrices (by reducing the step length $h$).  The computational time of the TG-ADS scheme is consistently lesser than the existing schemes for all size of matrices. 
\end{example}
\begin{figure}[H]
	\centering
	\includegraphics[height=3.2in,width=6.4in]{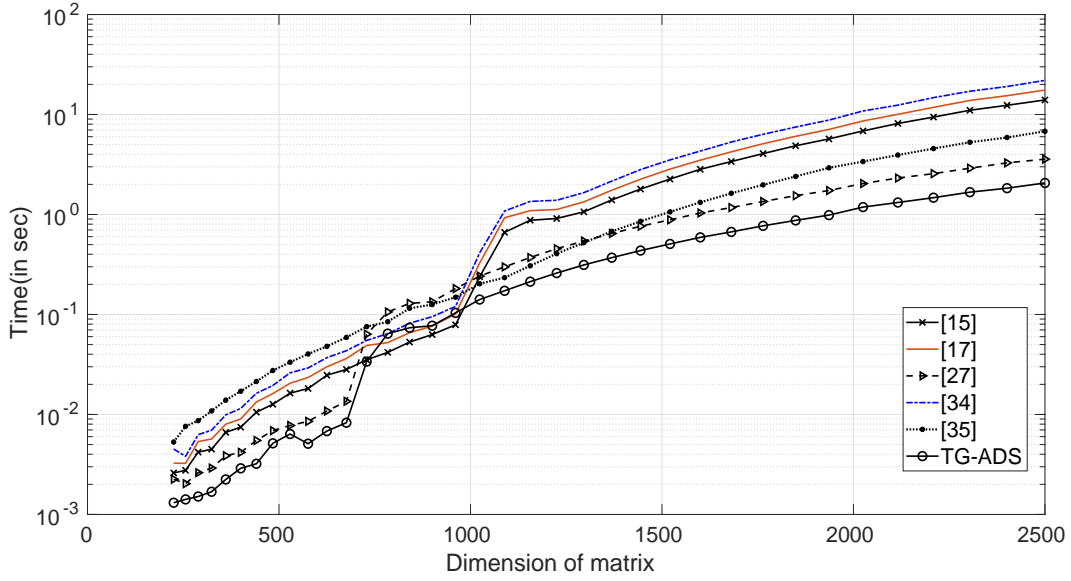}
		\vspace{-0.4cm}
\caption{Comparison of existing methods with the TG-ADS scheme }
	\label{fig:fig1}
\end{figure}

\begin{table}[H]
    \centering
     \caption{Comparison analysis of different schemes for $\epsilon=10^{-7}$}
    \begin{tabular}{cccccc}
    \hline
     Order of $A$ &  Method & n & $\|r_k\|_{2}$ & $\|e_k\|_{2}$ & MT\\
       \hline
       \multirow{6}{*} {$15 \times 15$} & Method of \cite{linw} & $453$ & $4.6057e^{-8}$ &$2.3197e^{-6}$ & $0.00955$\\
       & Method of \cite{maxi} & $568$ & $5.9575e^{-8}$ &$2.9791e^{-6}$ & $0.01525$\\
        & Method of \cite{shn} & $161$ & $2.6862e^{-8}$ &$6.4999e^{-7}$ & $0.00798$\\
         & Method of \cite{zhwy} & $701$ & $7.6163e^{-8}$ &$3.7715e^{-6}$ & $0.01704$\\
       
                           & TG-ADS  & $93$ & $1.1769e^{-8}$ & $2.8399e^{-7}$ & $0.00544$\\
           \hline
           \multirow{6}{*} {$25\times 25$} & Method of \cite{linw} & $1230$ & $4.7809e^{-8}$ &$6.3031e^{-6}$ & $0.16492$\\
           & Method of \cite{maxi} & $1514$ & $6.1655e^{-8}$ &$8.0690e^{-6}$ & $0.18487$\\
                   & Method of \cite{shn} & $283$ & $1.7583e^{-8}$ &$1.0791e^{-6}$ & $0.11376$\\
                    & Method of \cite{zhwy} & $1904$ & $7.7942e^{-8}$ &$1.0127e^{-5}$ & $0.25060$\\
           & TG-ADS & $162$ & $8.9456e^{-9}$ & $5.4889e^{-7}$ & $0.07400$\\
            \hline
           \multirow{6}{*} { $35\times 35$} & Method of \cite{linw} & $2399$ & $4.8831e^{-8}$ &$1.2277e^{-5}$ & $1.94247$\\
           & Method of \cite{maxi} & $3007$ & $6.2089e^{-8}$ & $1.5513e^{-5}$ & $2.39230$\\
            & Method of \cite{shn} & $408$ & $1.3870e^{-8}$ & $1.6075e^{-6}$ & $0.62750$\\
       & Method of \cite{zhwy} & $3716$ & $7.8517e^{-8}$ & $1.9509e^{-5}$ & $2.97910$\\
                       & TG-ADS  & $233$ & $7.2425e^{-9}$ & $8.4203e^{-7}$ & $0.37433$\\
            \hline
           \multirow{6}{*} {$50\times 50$} & Method of \cite{linw} & $-$ & $-$ &$-$ & $-$\\
           & Method of \cite{maxi} & $-$ & $-$ & $-$ & $-$\\
            & Method of \cite{shn} & $602$ & $9.8102e^{-9}$ & $2.2771e^{-6}$ & $3.92001$\\
       & Method of \cite{zhwy} & $-$ & $-$ &$-$ & $-$\\
                        & TG-ADS  & $343$ & $5.3138e^{-9}$ & $1.2329e^{-6}$ & $2.18799$\\
           \hline
    \end{tabular}
    \label{tab:tablepde}
\end{table}

Next, we will perform a few computational experiments to understand
the efficiency of the preconditioners induced by the ADS schemes.
The preconditioning matrix which will modify the original matrix such that the new matrix will be closer to the identity matrix or at least that the eigenvalues of the new matrix are clustered together, see \cite{wathen15} by Wathen in 2015. Hence, we can compute $\|I - LA\|$ with respect to different preconditioning matrix $L$ and compare with the $\|I - A\|$ to identify the efficient preconditioning matrix. In Table  \ref{tab:tableprecondnew11} and \ref{tab:tableprecond}, we have compared the efficiency of the preconditioners along with that we have observed the decrease in condition number of the coefficient matrix with respect to the increase in efficiency of the preconditioners induced by the ADS schemes.

% compared the efficiency of the preconditioners by computing the norm of the 

% the impact on the condition number of the coefficient matrix due to different preconditioners induced by the ADS schemes. Our task is to find a preconditioning matrix that will modify the original matrix such that the new matrix so formed will be closer to the identity matrix, or at least that the eigenvalues of the new matrix are clustered together. Here, we have used the original system \eqref{eqn1} and preconditioned systems \eqref{pc1} and \eqref{pc2}.
%  The time comparison and number of iterations between these iterative schemes is presented 

\begin{table}[H]
    \centering
     \caption{Comparison of preconditioners}
    \begin{tabular}{cccccc}
    \hline
     Order & System & n & Time & Condition number & Efficiency\\
       \hline
       \multirow{3}{*} {$15\times 15$} & ($A, b$) & $178$ &$0.22885$ & $100.3994$  & $0.9808$\\
       & $(\widehat{A}, \widehat{b})$ & $106$ & $0.16936$ & $91.5894$  & $0.9791$ \\
       & $(\widehat{\mathcal{ A}}, \widehat{\mathpzc{b}})$ & $93$ & $0.13691$& $53.0487$  & $0.9657$\\
        \hline
        \multirow{3}{*} {$25\times 25$} &  $(A, b)$ & $312$ & $5.34683$ & $266.0636$  & $0.9927$ \\
         & $(\widehat{A}, \widehat{b})$ & $186$ & $3.98814$ & $ 242.2502$  & $0.9920$ \\
         & $(\widehat{\mathcal{ A}}, \widehat{\mathpzc{b}})$ & $163$ & $3.48179$& $140.4851$ & $0.9816$ \\
        \hline
       \multirow{3}{*} {$35\times 35$} &  $(A, b)$ & $451$  & $43.16318$ & $510.6155$ & $0.9962$ \\
        & $(\widehat{A}, \widehat{b})$  & $268$ & $37.29997$ & $464.6237$ & $0.9958$ \\
        & $(\widehat{\mathcal{ A}}, \widehat{\mathpzc{b}})$ & $235$ & $32.84324$& $269.9276$ & $0.9873$ \\
        \hline
 \multirow{3}{*} {$50\times 50$} &  $(A, b)$ & $665$  & $412.50718$ & $1025.400$  & $0.9981$\\
     & $(\widehat{A}, \widehat{b})$ &   $395$ & $359.45053$ & $932.6497$ & $0.9979$ \\
     & $(\widehat{\mathcal{ A}}, \widehat{\mathpzc{b}})$ & $346$ & $336.15932$ & $542.8390$ & $0.9897$ \\
        \hline
    \end{tabular}
    \label{tab:tableprecondnew11}
\end{table}

For the computations in Table  \ref{tab:tableprecondnew11}, we have selected a second splitting $A = P_2 -R_2 + S_2$ such that that HT-ADS scheme converges faster than TG-ADS scheme. As a result, it shows that the preconditioned system \eqref{pc2} is better than the earlier one \eqref{pc1}. The comparison theorem (i.e., Theorem~\ref{wwcomp2}) served the sufficient conditions under which the faster convergence of the HT-ADS scheme is guaranteed. In particular, one can observe that the condition number of $A$ reduces from 1025.400 to 542.839 when matrix size is 2500 for the  preconditioned system \eqref{pc2} induced by the  HT-ADS scheme. The purpose of the last column of the table is crucial in order to measure the efficiency of the preconditioning matrix by computing the norm of the difference of the matrix or the preconditioned matrices from the identity matrix. The minimum norm will assure that the preconditioned matrix is the closest to identity matrix and confirm the corresponding preconditioner is the most efficient and its resulting system have the least condition number. For all sizes of matrices, considered in the table, the HT-ADS scheme preconditioner is consistently efficient and the condition number is less. Due to this effect, HT-ADS scheme converges with the least number of iterations and computational time.\\
 
 In  Table \ref{tab:tableprecond}, we have a  second splitting $A = P_2 -R_2 + S_2$ (in the complementary class of the splitting considered in Table \ref{tab:tableprecondnew11}) such that the TG-ADS scheme converges faster than the HT-ADS scheme. On the contrary to the results in Table 2, the TG-ADS scheme induces the efficient preconditioner and the preconditioned linear system has the least condition number. For this case, the guaranteed conditions on the splittings are reported in Remark \ref{remGTHTcomp2}, which are the sufficient conditions.
 Simultaneously, the iteration numbers and computational times are the least for the most efficient preconditioner, which has been consistently observed for the matrices of sizes 225, 625, 1225 and 2500 derived form the discretized PDE.
 
\begin{table}[H]\label{table3}
    \centering
     \caption{Comparison of preconditioners: complementary case of Table 2} 
    \begin{tabular}{cccccc}
    \hline
     Order & Systems & n & Time & Condition number  & Efficiency \\
       \hline
       \multirow{3}{*} {$15\times 15$} & $ (A, b) $ & $178$  &$0.21937$ & $100.3994$ & $0.9808$ \\
       & $(\widehat{A}, \widehat{b})$ & $96$  & $0.18625$ & $60.3614$ & $0.9696$ \\
       & $(\widehat{\mathcal{A}}, \widehat{\mathpzc{b}})$ & $101$ & $0.15694$& $75.7562$ & $0.9752$ \\
        \hline
        \multirow{3}{*} {$25\times 25$} & $(A, b)$ & $312$ & $4.79311$ & $266.0636$ & $0.9927$ \\
         & $(\widehat{A}, \widehat{b})$ & $168$ & $3.68142$ & $159.2569$ & $0.9882$  \\
         & $(\widehat{\mathcal{ A}}, \widehat{\mathpzc{b}})$ & $177$ & $3.87634$& $199.9162$ & $0.9905$ \\
        \hline
       \multirow{3}{*} {$35\times 35$} & $(A, b)$ & $451$  & $39.98544$ & $510.6155$  & $0.9962$ \\
        & $(\widehat{A}, \widehat{b})$ & $242$ & $32.91968$ & $305.4747$  & $0.9938$ \\
        & $(\widehat{\mathcal{ A}}, \widehat{\mathpzc{b}})$ & $255$ & $32.91633$& $383.2356$  & $0.9950$ \\
        \hline
 \multirow{3}{*} {$50\times 50$} & $ (A, b) $ & $665$  & $433.69228$ & $1025.400$  & $0.9981$ \\
     & $(\widehat{A}, \widehat{b})$ &   $356$ & $333.72665$ & $613.5452$  & $0.9969$  \\
     & $(\widehat{\mathcal{ A}}, \widehat{\mathpzc{b}})$ & $375$ & $360.78457$ & $769.1473$  & $0.9975$ \\
        \hline
    \end{tabular}
    \label{tab:tableprecond}
\end{table}

\iffalse
\begin{table}[H]
    \centering
     \caption{Comparison with the preconditioned linear system}
    \begin{tabular}{ccccc}
    \hline
     Order & Iterative scheme & IT & Time & Condition number\\
       \hline
       \multirow{3}{*} {225 $(N=15)$} & Double scheme $(\ref{eq3})$ & $145$ &$0.18850$ & $A = 100.3994$\\
       & Preconditioned 1 $(\ref{preeq1})$ & $93$ & $0.16904$ & $\widehat{A} = 75.7562$\\
       & Preconditioned 2 $(\ref{pp1.1})$ & $83$ & $0.12245$& $\widehat{A}_3 = 60.3614$\\
        \hline
        \multirow{3}{*} {625 $(N=25)$} &  Double scheme $(\ref{eq3})$ & $253$ & $3.83137$ & $A = 266.0636$\\
         & Preconditioned 1 $(\ref{preeq1})$ & $162$ & $3.55714$ & $\widehat{A} = 199.9162$\\
         & Preconditioned 2 $(\ref{pp1.1})$ & $144$ & $3.16768$& $\widehat{A}_3 = 159.2569$\\
        \hline
       \multirow{3}{*} {1225 $(N=35)$} &  Double scheme $(\ref{eq3})$ & $365$  & $35.69990$ & $A = 510.6155$ \\
        & Preconditioned 1 $(\ref{preeq1})$  & $233$ & $34.23893$ & $\widehat{A} = 383.2356$ \\
        & Preconditioned 2 $(\ref{pp1.1})$ & $207$ & $29.11658$& $\widehat{A}_3 = 305.4747$\\
        \hline
 \multirow{3}{*} {2500 $(N=50)$} &  Double scheme $(\ref{eq3})$ & $538$  & $360.93640$ & $A = 1025.400$\\
     & Preconditioned 1 $(\ref{preeq1})$ &   $344$ & $346.40408$ & $\widehat{A} = 769.1473$\\
     & Preconditioned 2 $(\ref{pp1.1})$ & $304$ & $297.42374$ & $\widehat{A}_3 = 613.5452$\\
        \hline
    \end{tabular}
    \label{tab:tableprecondnew}
\end{table}
\fi

The following example demonstrates Theorem \ref{altcomp}, and  also used to generate large rectangular matrices that are used for the computation in Table 4. 

\begin{example}\label{ex3.7}
Let  $ A = \begin{bmatrix}
  1 & 18 & -\frac{1}{2} & -\frac{1}{4} & -\frac{1}{8} & -\frac{1}{16} & -\frac{1}{32} & -\frac{1}{64} & -\frac{1}{128} & 0
   \\
 0 & -\frac{1}{2} & 14 & -\frac{1}{2} & -\frac{1}{4} & -\frac{1}{8} & -\frac{1}{16} & -\frac{1}{32} & -\frac{1}{64} & 0 \\
 0 & -\frac{1}{4} & -\frac{1}{2} & 20 & -\frac{1}{2} & -\frac{1}{4} & -\frac{1}{8} & -\frac{1}{16} & -\frac{1}{32} & 0 \\
 0 & -\frac{1}{8} & -\frac{1}{4} & -\frac{1}{2} & 11 & -\frac{1}{2} & -\frac{1}{4} & -\frac{1}{8} & -\frac{1}{16} & 0 \\
 0 & -\frac{1}{16} & -\frac{1}{8} & -\frac{1}{4} & -\frac{1}{2} & 14 & -\frac{1}{2} & -\frac{1}{4} & -\frac{1}{8} & 0 \\
 0 & -\frac{1}{32} & -\frac{1}{16} & -\frac{1}{8} & -\frac{1}{4} & -\frac{1}{2} & 19 & -\frac{1}{2} & -\frac{1}{4} & 0 \\
 0 & -\frac{1}{64} & -\frac{1}{32} & -\frac{1}{16} & -\frac{1}{8} & -\frac{1}{4} & -\frac{1}{2} & 19 & -\frac{1}{2} & 0 \\
 0 & -\frac{1}{128} & -\frac{1}{64} & -\frac{1}{32} & -\frac{1}{16} & -\frac{1}{8} & -\frac{1}{4} & -\frac{1}{2} & 19 & 1
   \\
\end{bmatrix}$\\[1ex]
$= P_1-R_1+S_1$ be a double proper weak regular  splitting, where
\begin{align*}
   P_1 =   \begin{bmatrix}
\frac{871705488637}{33342018661} & 470 & -\frac{1}{2} & -\frac{1}{4} & -\frac{1}{8} & -\frac{1}{16} & -\frac{1}{32} &
   -\frac{1}{64} & -\frac{1}{128} & \frac{72320000}{4763145523} \\
 \frac{31485697615}{33342018661} & -\frac{1}{2} & 479 & -\frac{1}{2} & -\frac{1}{4} & -\frac{1}{8} & -\frac{1}{16} &
   -\frac{1}{32} & -\frac{1}{64} & \frac{181040000}{4763145523} \\
 \frac{31551183050}{100026055983} & -\frac{1}{4} & -\frac{1}{2} & 430 & -\frac{1}{2} & -\frac{1}{4} & -\frac{1}{8} &
   -\frac{1}{16} & -\frac{1}{32} & \frac{642880000}{14289436569} \\
 \frac{3176280160}{14289436569} & -\frac{1}{8} & -\frac{1}{4} & -\frac{1}{2} & 315 & -\frac{1}{2} & -\frac{1}{4} &
   -\frac{1}{8} & -\frac{1}{16} & \frac{1582016000}{14289436569} \\
 \frac{1851285950}{14289436569} & -\frac{1}{16} & -\frac{1}{8} & -\frac{1}{4} & -\frac{1}{2} & 429 & -\frac{1}{2} &
   -\frac{1}{4} & -\frac{1}{8} & \frac{3192346000}{14289436569} \\
 \frac{180765200}{4763145523} & -\frac{1}{32} & -\frac{1}{16} & -\frac{1}{8} & -\frac{1}{4} & -\frac{1}{2} & 327 &
   -\frac{1}{2} & -\frac{1}{4} & \frac{1100989120}{4763145523} \\
 \frac{152460000}{4763145523} & -\frac{1}{64} & -\frac{1}{32} & -\frac{1}{16} & -\frac{1}{8} & -\frac{1}{4} & -\frac{1}{2}
   & 514 & -\frac{1}{2} & \frac{3338973990}{4763145523} \\
 \frac{68320000}{4763145523} & -\frac{1}{128} & -\frac{1}{64} & -\frac{1}{32} & -\frac{1}{16} & -\frac{1}{8} &
   -\frac{1}{4} & -\frac{1}{2} & 446 & \frac{111914626295}{4763145523} \\
\end{bmatrix}
\end{align*}
and 

\begin{align*}
   R_1 =  \begin{bmatrix}
 \frac{628772602482}{33342018661} & 339 & 0 & 0 & 0 & 0 & 0 & 0 & 0 & \frac{54240000}{4763145523} \\
 \frac{94457092845}{133368074644} & 0 & \frac{1395}{4} & 0 & 0 & 0 & 0 & 0 & 0 & \frac{135780000}{4763145523} \\
 \frac{15775591525}{66684037322} & 0 & 0 & \frac{615}{2} & 0 & 0 & 0 & 0 & 0 & \frac{160720000}{4763145523} \\
 \frac{794070040}{4763145523} & 0 & 0 & 0 & 228 & 0 & 0 & 0 & 0 & \frac{395504000}{4763145523} \\
 \frac{925642975}{9526291046} & 0 & 0 & 0 & 0 & \frac{1245}{4} & 0 & 0 & 0 & \frac{798086500}{4763145523} \\
 \frac{135573900}{4763145523} & 0 & 0 & 0 & 0 & 0 & 231 & 0 & 0 & \frac{825741840}{4763145523} \\
 \frac{114345000}{4763145523} & 0 & 0 & 0 & 0 & 0 & 0 & \frac{1485}{4} & 0 & \frac{5008460985}{9526291046} \\
 \frac{51240000}{4763145523} & 0 & 0 & 0 & 0 & 0 & 0 & 0 & \frac{1281}{4} & \frac{80363610579}{4763145523} \\
\end{bmatrix}.
\end{align*}

Again, $A = P_2-R_2+S_2$ is a double proper regular  splitting of $A$ which satisfies  $N(S_2)\supseteq N(P_2)$, $R(S_2)\subseteq R(P_2)$, $||S_2P_1^{\dag}||<1$ and $\widehat{A}^{\dag}\geq 0$. Here, we have
\begin{align*}
    P_2 = \begin{bmatrix}
\frac{775256593861}{33342018661} & 418 & -\frac{1}{2} & -\frac{1}{4} & -\frac{1}{8} & -\frac{1}{16} & -\frac{1}{32} &
   -\frac{1}{64} & -\frac{1}{128} & \frac{64000000}{4763145523} \\
 \frac{81253413200}{100026055983} & -\frac{1}{2} & 414 & -\frac{1}{2} & -\frac{1}{4} & -\frac{1}{8} & -\frac{1}{16} &
   -\frac{1}{32} & -\frac{1}{64} & \frac{467200000}{14289436569} \\
 \frac{30781642000}{100026055983} & -\frac{1}{4} & -\frac{1}{2} & 420 & -\frac{1}{2} & -\frac{1}{4} & -\frac{1}{8} &
   -\frac{1}{16} & -\frac{1}{32} & \frac{627200000}{14289436569} \\
 \frac{4179316000}{14289436569} & -\frac{1}{8} & -\frac{1}{4} & -\frac{1}{2} & 411 & -\frac{1}{2} & -\frac{1}{4} &
   -\frac{1}{8} & -\frac{1}{16} & \frac{2081600000}{14289436569} \\
 \frac{1784372000}{14289436569} & -\frac{1}{16} & -\frac{1}{8} & -\frac{1}{4} & -\frac{1}{2} & 414 & -\frac{1}{2} &
   -\frac{1}{4} & -\frac{1}{8} & \frac{3076960000}{14289436569} \\
 \frac{234760000}{4763145523} & -\frac{1}{32} & -\frac{1}{16} & -\frac{1}{8} & -\frac{1}{4} & -\frac{1}{2} & 419 &
   -\frac{1}{2} & -\frac{1}{4} & \frac{1429856000}{4763145523} \\
 \frac{123200000}{4763145523} & -\frac{1}{64} & -\frac{1}{32} & -\frac{1}{16} & -\frac{1}{8} & -\frac{1}{4} & -\frac{1}{2}
   & 419 & -\frac{1}{2} & \frac{2698160800}{4763145523} \\
 \frac{64000000}{4763145523} & -\frac{1}{128} & -\frac{1}{64} & -\frac{1}{32} & -\frac{1}{16} & -\frac{1}{8} &
   -\frac{1}{4} & -\frac{1}{2} & 419 & \frac{105139239923}{4763145523} \\
\end{bmatrix}
\end{align*}

and 

\begin{align*}
    R_2 = \begin{bmatrix}
\frac{695544914250}{33342018661} & 375 & 0 & 0 & 0 & 0 & 0 & 0 & 0 & \frac{60000000}{4763145523} \\
 \frac{25391691625}{33342018661} & 0 & 375 & 0 & 0 & 0 & 0 & 0 & 0 & \frac{146000000}{4763145523} \\
 \frac{9619263125}{33342018661} & 0 & 0 & 375 & 0 & 0 & 0 & 0 & 0 & \frac{196000000}{4763145523} \\
 \frac{1306036250}{4763145523} & 0 & 0 & 0 & 375 & 0 & 0 & 0 & 0 & \frac{650500000}{4763145523} \\
 \frac{557616250}{4763145523} & 0 & 0 & 0 & 0 & 375 & 0 & 0 & 0 & \frac{961550000}{4763145523} \\
 \frac{220087500}{4763145523} & 0 & 0 & 0 & 0 & 0 & 375 & 0 & 0 & \frac{1340490000}{4763145523} \\
 \frac{115500000}{4763145523} & 0 & 0 & 0 & 0 & 0 & 0 & 375 & 0 & \frac{2529525750}{4763145523} \\
 \frac{60000000}{4763145523} & 0 & 0 & 0 & 0 & 0 & 0 & 0 & 375 & \frac{94102588500}{4763145523} \\
\end{bmatrix}.
\end{align*}
Therefore, $0.9720 = \rho(W_{12})\leq 0.9752 = \rho(T_2)<1.$  The computational performance of the TG-ADS scheme with the  double iteration scheme \eqref{eq2} is summarized in Table \ref{tab:table1}.
\end{example}

\begin{table}[H]\label{hybds}
    \centering
     \caption{Comparison analysis for rectangular matrices}
    \begin{tabular}{ccccccc}
    \hline
     Order & Method & n & $\|r_n\|$  & $\|e_n\|$ & $\rho$ & MT\\
       \hline
       \multirow{2}{*} {$8\times 10$} & TG-ADS & $626$ & $1.0675e^{-6}$ & $9.8514e^{-8}$ & $0.9720$ & $0.00469$ \\
       & Method of \cite{jmp} & $707$ & $1.0592e^{-6}$ & $9.7765e^{-8}$ & $0.9752$ & $0.00625$ \\
        \hline
        \multirow{2}{*} {$18\times 20$} & TG-ADS  & $675$ & $8.9624e^{-7}$ & $9.7691e^{-8}$ & $0.9751$ & $0.01250$  \\
         & Method of \cite{jmp} & $897$ & $9.0380e^{-7}$ & $9.8409e^{-8}$ & $0.9812$ & $0.01406$  \\
        \hline
       \multirow{2}{*} {$28\times 30$} & TG-ADS  & $678$ & $9.8211e^{-7}$ & $9.9963e^{-8}$ & $0.9754$ & $0.02609$  \\
        & Method of \cite{jmp}  & $983$ & $9.8307e^{-7}$ & $9.8492e^{-8}$ & $0.9826$ & $0.03403$  \\
        \hline
 \multirow{2}{*} {$48\times 50$} & TG-ADS  & $727$ & $9.9325e^{-7}$ & $9.9956e^{-8}$ & $0.9812$ & $0.04712$  \\
     & Method of \cite{jmp} &   $1188$ & $9.9534e^{-7}$ & $9.9867e^{-8}$ & $0.9963$ & $0.06548$  \\
        \hline
    \end{tabular}
    \label{tab:table1}
\end{table}

We have selected four rectangular matrices by the column extension of the diagonally dominant matrices of sizes 8, 18, 28 and 48, respectively. Example~\ref{ex3.7} is explained for a diagonally dominant matrix of size 8, whose columns are extended to 10. Explicitly, we have obtained two double splittings, which satisfy the necessary conditions such that the preconditioned matrix induced by the TG-ADS scheme has a convergent double proper regular(weak) splitting. This rectangular matrix $A$ of size $8 \times 10$ is semi-monotone. Similarly, the rest of the three matrices can be shown as semi-monotone matrices. We have computed the error norm to make sure that the approximate solution is achieved within the required digit accuracy before the stopping criteria meet the tolerance.

We next generate a $38\times 40$ semi-monotone matrix as in Example \ref{ex3.7} to illustrate the residual and error of different iterative schemes.  Residual $\&$ error norms are plotted against the iteration number in figure \ref{fig:fig2}. 

\begin{figure}[H]
~~~~~~~(a)~~~~~~~~~~~~~~~~~~~~~~~~~~~~~~~~~~~~~~~~~~~~~~~~~~~~~~(b)
	\centering
	\includegraphics[height=2.5in,width=3.1in]{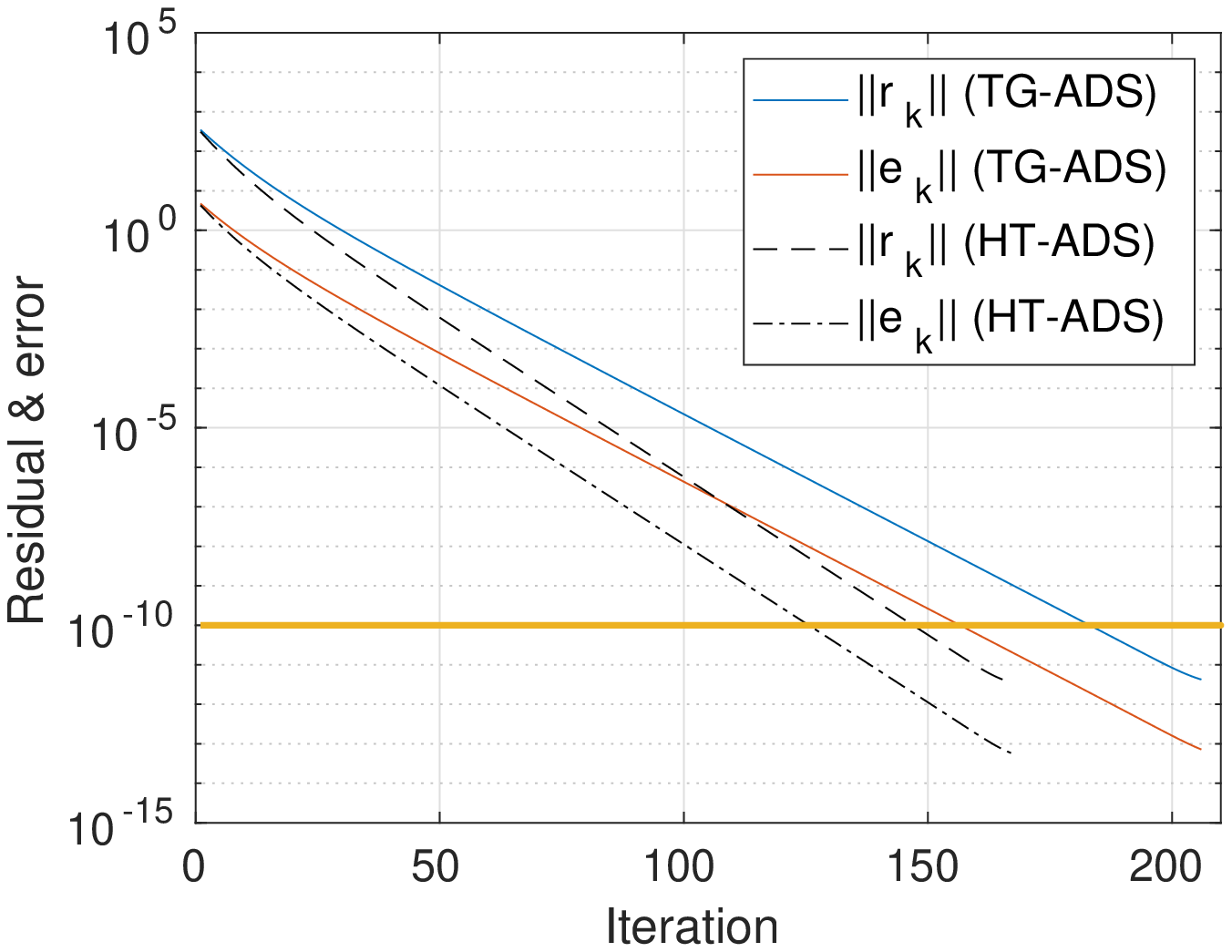}
	\includegraphics[height=2.5in,width=3.1in]{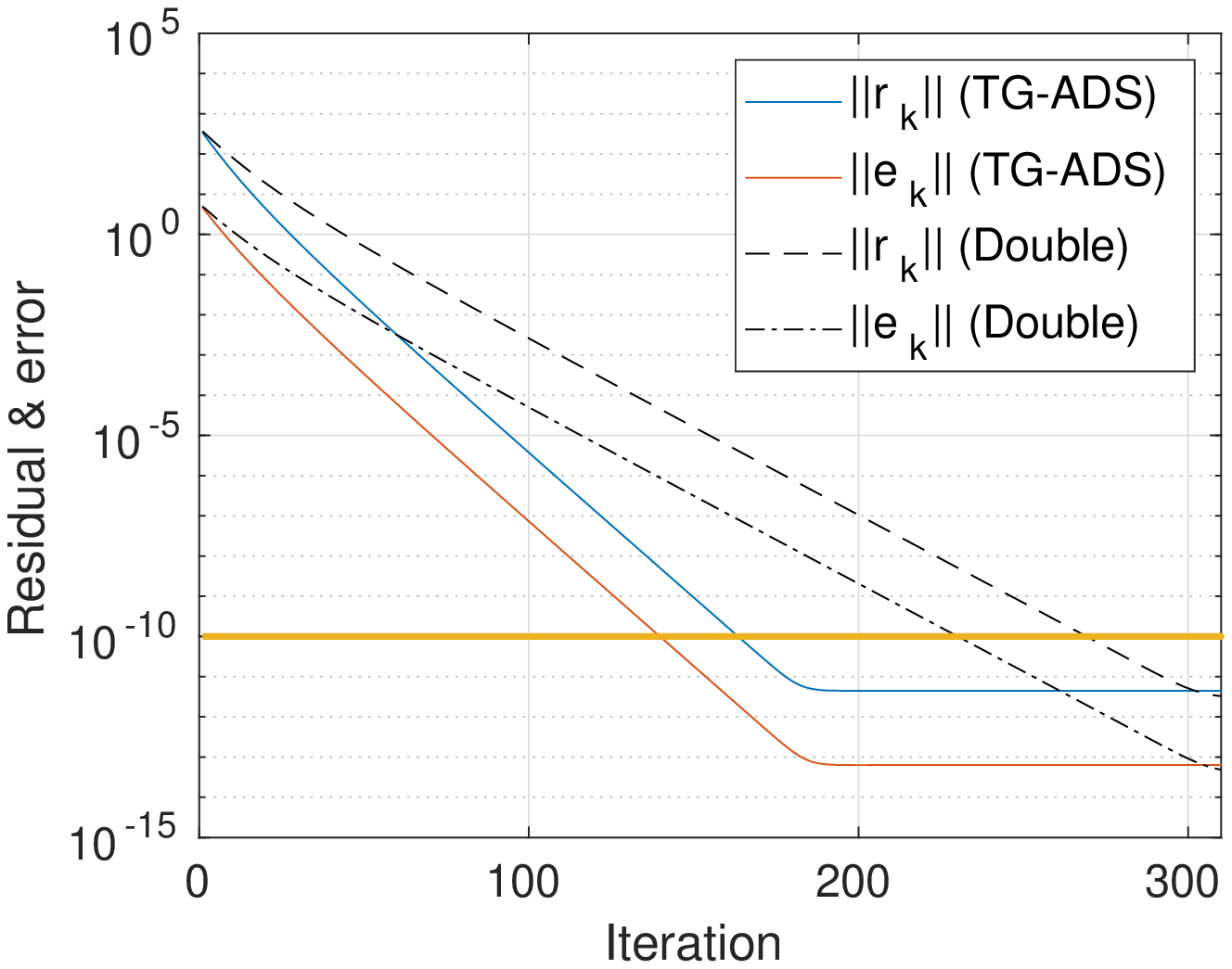}
	\vspace{-0.4cm}
    \caption{(a) Comparison of convergence in errors and residuals of TG and HT-ADS schemes.\\ (b) Comparison of Double iteration scheme \eqref{eq2} with TG-ADS scheme}
    \label{fig:fig2}
\end{figure}

\section{Conclusions}
In this paper, we have proposed a new alternating scheme using double splittings (HT-ADS scheme) like the one introduced by  Li {\it et al.} \cite{li} in 2019, and studied the extension
of both the schemes to rectangular matrix setting. The important findings are summarized as
follows:

\begin{itemize}

\item Formulation of the proposed schemes: TG-ADS scheme and HT-ADS scheme, are shown in Section \ref{sub3.01} in the rectangular matrix setting.  Then, the convergence analysis is carried out in \ref{sec:cgs} for the class of double proper weak regular splittings. This is done by considering another preconditioned linear system which is induced by the ADS scheme. 

\item The significance of introducing  ADS schemes are studied next. In this context, we have established several analytical results which justifies the importance by showing the faster convergence of the ADS schemes. We have also presented a few results which will guide us to choose a particular ADS scheme in case we have more than one same type of ADS schemes. More importantly, we have shown that one ADS scheme outperforms the other for a certain case. This is proved in Theorem \ref{wwcomp2}.

\item As illustrated in Example \ref{pde}, there are substantial examples of linear systems of PDEs. Our numerical experiments with test matrices from different applications suggest that the ADS schemes are fairly robust. These computations also show that the ADS scheme performs better than some other existing schemes in the literature. Residual and error norms of the ADS scheme are monotonically convergent and faster than the double iteration scheme.

\end{itemize}

\section{Acknowledgements}

% The authors would like to express their sincere thanks and gratitude to the editors and the anonymous referees for their valuable comments and suggestions in the improvement of the
% manuscript.
The authors thank Vaibhav Shekhar, National Institute of Technology Raipur for his valuable comments and suggestions.
The last author acknowledges the support provided
by Science and Engineering Research Board, Department of Science and Technology, New Delhi, India, under the grant number
MTR/2017/000174.

%\section*{References}


\begin{thebibliography}{99}

\bibitem{balim}
Baliarsingh, A. K., Mishra, D., Comparison results for proper nonnegative splittings of matrices, Results. Math. 71 (2017) 93--109.


% \bibitem{bai:NLAA11}
% Bai, Z.-Z., Block alternating splitting implicit iteration methods for
% saddle-point problems from time-harmonic eddy current models, Numer. Linear Algebra Appl.   19(6) (2012) 914--936.

 \bibitem{bai:computing}
 Bai, Z.-Z., Benzi, M., Chen, F., Modified HSS iteration methods for a class of complex symmetric linear systems, Computing 87 (2010) 93--111.

\bibitem{benzi:2009} Benzi, M.,  A generalization of the Hermitian and skew-Hermitian splitting iteration, SIAM J. Matrix Anal. Appl.  31(2) (2009) 360--374.


\bibitem{benz} Benzi, M., Szyld, D. B., Existence and uniqueness of splittings for stationary iterative methods with applications to alternating methods, Numer. Math. 76(3) (1997) 309--321.


\bibitem{berp}
Berman, A., Plemmons, R. J., Cones and iterative methods for best least squares solutions of linear systems, SIAM J. Numer. Anal. 11(1) (1974) 145--154.


\bibitem{bern}
Berman, A., Neumann, M.,
Proper splittings of rectangular matrices,
SIAM J. Appl. Math. 31(2) (1976)  307--312.

\bibitem{bpn}
Berman, A., Plemmons, R. J., Nonnegative Matrices in the Mathematical Sciences, SIAM, Philadelphia,
(1994).


\bibitem{BIR-VER:1959}
Birkhoff, G., Varga, R. S., Implicit alternating direction methods, Trans. Amer. Math. Soc. 92(1959) 13--24.
 
\bibitem{boyd:2010}
 Boyd, S., Parikh, N., Chu, E., Peleato, B., Eckstein, J., Distributed optimization and statistical learning via the alternating direction method of multipliers, Found. Trends Mach. Learn. 3(1)    (2010) 1--122.
 
 \bibitem{bruch-sloss:1985}
 Bruch, Jr., J. C.,  Sloss J. M., Alternating iteration and elliptic variational inequalities, Numer. Math. 47 (1985) 459--481.


\bibitem{cli1} {Climent, J.-J., Devesa, A., Perea, C.}, \emph{ Convergence results for proper splittings}, Recent Advances in Applied and Theoretical Mathematics, World
Scientific and Engineering Society Press, Singapore (2000) 39--44.

\bibitem{cli2} {Climent, J.-J., Perea, C.}, \emph{Iterative methods for least-square problems based on proper
splittings}, J. Comput. Appl. Math. 158 (2003) 43--48.


\bibitem{coltz}
Collatz, L., Functional Analysis and Numerical Mathematics, Academic Press, New York-London, 1966.

\bibitem{damm:NLAA2000}
Damm, T., Direct methods and ADI-preconditioned Krylov subspace methods for generalized Lyapunov equation, Numer. Linear Algebra Appl. 15 (9) (2008) 853--871.

\bibitem{mishalt2}
Giri, C. K., Mishra, D., Additional results on convergence of alternating iterations involving rectangular matrices, Numer. Funct. Anal. Optim. 38(2) (2017) 160--180.

\bibitem{golub}
Golub, G. H., Van Loan, C. F., Matrix Computations, The John Hopkins University Press, (1996).


\bibitem{jmp}
Jena, L., Mishra, D., Pani, S., Convergence and comparison theorems for single and double decompositions of rectangular matrices, Calcolo 51(1) (2014) 141--149.


\bibitem{kur}
Appi Reddy, K., Kurmayya, T., Comparison results for proper double splittings of rectangular matrices,  Filomat 32(6) (2018) 2273-2281.


\bibitem{lcw}
Li, C.-X., Cui, Q.-F., Wu, S.-L., Comparison theorems for single and double splittings of matrices, J. Appl. Math. Volume 2013, Article ID 827826, (2013) 4 pages. https://doi.org/10.1155/2013/827826.


\bibitem{lish}
Li, C.-X., Li, S.-H., Comparison theorems of spectral radius for splittings of matrices, J. Appl. Math. Volume 2014, Article ID 573024, (2014) 5 pages.
http://dx.doi.org/10.1155/2014/573024.


\bibitem{lws}
Li, C.-X., Wu, S.-L., Some new comparison theorems for double splittings of matrices, Appl. Math. Inf. Sci. 8(5) (2014) 2523--2526.


\bibitem{linw}
 Lin, L., Wei, Y., Zhang, N., Convergence and quotient convergence of iterative methods for solving singular linear equations with index one, Linear Algebra  Appl. 430 (2009) 1665--1674.

\bibitem{li}
Li, R., Fan, H. T., Zheng, B., An effective stationary iterative method via double splittings of matrices, Comput. Math. Appl. 77(4) (2019) 981--990.


\bibitem{maxi}
 Ma, H., Xiao, C., Convergence of nonstationary iterative methods for solving singular linear equations with index one, Numer. Funct. Anal. Optim. 38(11) (2017)
 1507--1525.


\bibitem{msx}
Miao, S.-X.,  Comparison theorems for nonnegative double splittings of different monotone matrices, J. Inf. Comput. Sci. 9(6) (2012) 1421--1428.


\bibitem{mio}
Miao, S.-X., Zheng, B., A note on double splittings of different monotone matrices, Calcolo 46(4) (2009) 261--266.

\bibitem{miga} {Migall{\'o}n, H., Migall{\'o}n, V., Penad{\'e}s, J.}, {Alternating two-stage methods for consistent linear systems with applications
to the parallel solution of Markov chains}, Adv. Eng. Softw. 41(1) (2010) 13--21.

% \bibitem{miga1} {Migall{\'o}n, H., Migall{\'o}n, V., Penad{\'e}s, J.}, {Parallel
% alternating iterative
% algorithms with
% and without overlapping on multicore architectures,} Adv. Eng. Softw. 101 (2016) 27--36.


\bibitem{mishalt1}
Mishra, D., Further study of alternating iterations for rectangular matrices, Linear Multilinear Algebra
65(8) (2017) 1566--1580.


\bibitem{dm}
Mishra, D., Nonnegative splittings for rectangular matrices, Comput. Math. Appl. 67(1) (2014) 136--144.


\bibitem{misarx}
Mishra, D., Proper weak regular splitting and its application to convergence of alternating iterations,
Filomat 32(19) (2018) 6563--6573.


\bibitem{mismis}
Mishra, N., Mishra, D., Two-stage iterations based on composite splittings for rectangular linear systems, Comput. Math. Appl. 75(8) (2018) 2746--2756.


\bibitem{ms} {Mishra, D.,  Sivakumar, K.  C.},
\emph{On splittings of matrices and nonnegative generalized inverses}, Oper. Matrices 6 (2012) 85-95.


\bibitem{nsm}
Nandi, A. K., Sahoo, J.K.,  Mishra, D., Three-step alternating iterations for index one and non-singular matrices, Numer. Algor.  84(2) (2019) 457-483.


\bibitem{neum}
Neumann, M., 3-part splittings for singular and rectangular linear systems, J. Math. Anal. Appl. 64(2) (1978) 297--318.

\bibitem{PR-ADI:1955}
Peaceman, D. W., Rachford Jr, H. H.,
The numerical solution of parabolic and elliptic differential equations, J. Soc. Indust. Appl. Math. 3 (1955) 28--41.

\bibitem{shn}
Shen, S.-Q., Huang, T.-Z., Convergence and comparison theorems for double splittings of matrices, Comput. Math. Appl. 51(12) (2006) 1751--1760.


\bibitem{shs}
 Shen, S.-Q., Huang, T.-Z., Shao, J.-L., Convergence and comparison results for double splittings of Hermitian positive definite matrices, Calcolo 44(3) (2007) 127--135.


\bibitem{shi:2014}
Shi, W., Ling, Q., Yuan, K., Wu, G.,  Yin, W., On the linear convergence of the ADMM in decentralized consensus optimization, IEEE Trans. Signal Process.  62(7) (2014) 1750--1761.


\bibitem{sjs}
Song, J., Song, Y., Convergence for nonnegative double splittings of matrices, Calcolo 48(3) (2011) 245--260.


\bibitem{var}
Varga, R. S., Matrix Iterative Analysis, Springer-Verlag, New York, Berlin, Heidelberg, (2009).


\bibitem{wanz}
Wang, X.-Z., Convergence of $H$-double splitting for $H$-matrices, Results. Math. 66 (2014) 125--135.

\bibitem{wathen15}
Wathen, A. J., Preconditioning, Acta Numer. 24 (2015) 329--376.

\bibitem{wang:NLAA18}
Wang, Z.-Q., A note on the block alternating splitting implicit iteration
method for complex saddle-point problems, Numer. Linear Algebra Appl.  25 (2018), no. 6, e2209, 15 pp



\bibitem{woz}
Wo{\'z}nicki, Z. I., Estimation of the optimum relaxation factors in partial factorization iterative methods, SIAM J. Matrix Anal. Appl. 14(1) (1993) 59--73.


\bibitem{zc}
Zhang, C.-Y., On convergence of double splitting methods for non-Hermitian positive semidefinite linear systems, Calcolo 47(2) (2010) 103--112.


\bibitem{zhwy}
 Zhang, N., Wei, Y., On the convergence of general stationary iterative methods for range-Hermitian singular linear systems, Numer. Linear Algebra  Appl. 17(1) (2010) 139--154.
 

\bibitem{zhwe}
 Zhang, N., Wei, Y., Solving EP singular linear systems, Int. J. Comput. Math. 81(11) (2004) 1395--1405.


\end{thebibliography}
\end{document}